\documentclass[12pt]{extarticle}
\usepackage[utf8]{inputenc}
\usepackage[T1]{fontenc}

\usepackage{caption}
\tolerance 10000
\headheight 0in
\headsep 0in
\evensidemargin 0in
\oddsidemargin \evensidemargin
\textwidth 6.5in
\topmargin .25in
\textheight 8.8in
\synctex=1

\usepackage{amssymb}
\usepackage{pmboxdraw}
\usepackage{amsmath}
\usepackage{bbm}
\usepackage{amsthm}
\usepackage{amssymb}
\usepackage[dvipsnames]{xcolor}
\usepackage{graphicx}
\usepackage{alphabeta}
\usepackage{tikz,pgfplots}
\usepackage{mathrsfs}
\usepackage{hyperref}
\usepackage[all]{xy}
\usepackage[greek,english]{babel}
\usepackage[export]{adjustbox}
\usepackage{hyperref}
\usepackage{bibspacing}
\setlength{\bibitemsep}{.55\baselineskip plus .05\baselineskip minus .05\baselineskip}
\hypersetup{
    colorlinks=true,
    linkcolor=orange!80!black,
    filecolor=magenta,      
    urlcolor=blue,
    citecolor=blue,
}
\usepackage{enumitem}

\definecolor{myblue}{rgb}{0.00000,0.44700,0.74100}
\definecolor{myorange}{rgb}{0.8500, 0.3250, 0.0980}
\definecolor{myyellow}{rgb}{0.9290, 0.6940, 0.1250}
\definecolor{mypurple}{rgb}{0.4940, 0.1840, 0.5560}
\definecolor{mygreen}{rgb}{0.4660, 0.6740, 0.1880}

\usepackage[frozencache=false,cachedir=minted-cache]{minted}
\usemintedstyle{tango}
\setminted[julia]{frame=lines,
rulecolor=\color{white!80!black},
fontsize=\small,
numbers=right,
numbersep=-5pt,
obeytabs=true,
tabsize=4}

\newtheorem{theorem}{Theorem}[section]
\newtheorem{theorem*}[theorem]{Theorem*}

\newtheorem{lemma}[theorem]{Lemma}
\newtheorem{corollary}[theorem]{Corollary}
\newtheorem{proposition}[theorem]{Proposition}

\newtheorem{conjecture}[theorem]{Conjecture}
\theoremstyle{definition}

\newenvironment{example}
{\pushQED{\qed}\examplex}
{\popQED\endexamplex}
\newtheorem{definition}[theorem]{Definition}
\newtheorem{remark}[theorem]{Remark}

\theoremstyle{remark}

\title{\bf Santal\'o Geometry of Convex Polytopes}
\author{Dmitrii Pavlov and Simon Telen}
\date{}

\begin{document}

\maketitle

\begin{abstract}
\noindent The Santal\'o point of a convex polytope is the interior point which leads to a polar dual of minimal volume. This minimization problem is relevant in interior point methods for convex optimization, where the logarithm of the dual volume is known as the universal barrier function. When translating the facet hyperplanes, the Santal\'o point traces out a semi-algebraic set. We describe and compute this geometry using algebraic and numerical techniques. We exploit connections with statistics, optimization and~physics. 
\end{abstract}

\section{Introduction}

This article studies the (semi-)algebraic geometry of minimizing volumes of dual polytopes. Motivations include optimization, statistics and particle physics. To make this more precise, we start with some terminology. A polytope $P \subset \mathbb{R}^m$ is the convex hull of finitely many points. If $P$ has dimension $m$, then each point $y$ in its interior defines a dual polytope 
\[ (P-y)^\circ \, = \, \{ z \in (\mathbb{R}^m)^\vee \, : \, \langle y'-y, z \rangle \leq 1, \, \, \text{for all} \, \, y' \in P \}. \]
The function $y \mapsto {\rm vol}_m \, (P-y)^\circ$ is strictly convex on the interior of $P$. In fact, this is true when $P$ is replaced by any convex body, see the proof of Proposition 1(i) in \cite{meyer1998santalo}. It follows that there is a unique minimizer $y^* \in {\rm int}(P)$. That point is called the \emph{Santal\'o point} of $P$:
\begin{equation} \label{eq:ystar}
y^* \, = \, \underset{y \in {\rm int}(P)}{\operatorname{argmin}} \, \,  {\rm vol}_m \, (P-y)^\circ \, = \, \underset{y \in {\rm int}(P)}{\operatorname{argmin}}  \, \, \int_{(P-y)^\circ} {\rm d} z_1 \cdots  {\rm d} z_m.  \end{equation}
A special property of polytopes, compared to general convex bodies, is that our volume function is \emph{rational}. It follows from Theorems 3.1 and 3.2 in \cite{gaetz2020positive} that 
\begin{equation} \label{eq:volrational}
{\rm vol}_m \, (P-y)^\circ \, = \, \gamma \cdot \frac{\alpha_P(y)}{\ell_1(y) \cdot \cdots \cdot \ell_k(y)},
\end{equation}
where $\gamma$ is a nonzero real constant, $\ell_i(y) = 0$ is an affine-linear equation defining the $i$-th facet hyperplane of $P$, and $\alpha_P(y)$ is the adjoint polynomial. We will recall a formula for $\alpha_P$ in Section \ref{sec:2}. 
Having established the identity \eqref{eq:volrational}, computing the Santal\'o point of $P$ comes down to minimizing a convex rational function or, equivalently, its logarithm. 
\begin{example}[$m = 2, k = 5$] \label{ex:pentagon1}
    We consider the pentagon $P$ in $\mathbb{R}^2$ given by the inequalities 
    \[ y_1 + \frac{1}{5} \geq 0, \quad y_2 + \frac{1}{5} \geq 0, \quad 2 y_1 + 2 y_2 + \frac{1}{5} \geq 0, \quad -2y_1 -y_2  + \frac{1}{5} \geq 0, \quad -y_1 -2y_2 + \frac{1}{5} \geq 0 .\]
    It is shown, together with the poles and zeros of ${\rm vol}_2 (P-y)^\circ$, in Figure \ref{fig:pentagon1} (left). We have
    \begin{equation} \label{eq:volpentagon}
    {\rm vol}_2 (P-y)^\circ \, = \, \frac{1}{125} \dfrac{-50y_1^2 - 25y_1y_2 + 15y_1 - 50y_2^2 + 15y_2 + 11}{(y_1+ \frac{1}{5})(y_2+ \frac{1}{5})(2y_1+2 y_2 + \frac{1}{5})(-2y_1 -y_2 + \frac{1}{5})(-y_1 -2y_2 + \frac{1}{5})}.    \end{equation}
    The Santal\'o point minimizes this function on ${\rm int}(P)$: ${y^* = (-0{.}00311069, 
    -0{.}00311069)}$. This was computed using numerical methods in \texttt{Julia}. See Example \ref{ex:6-1} for details.
      \begin{figure}
        \centering
        \includegraphics[height = 4.5cm]{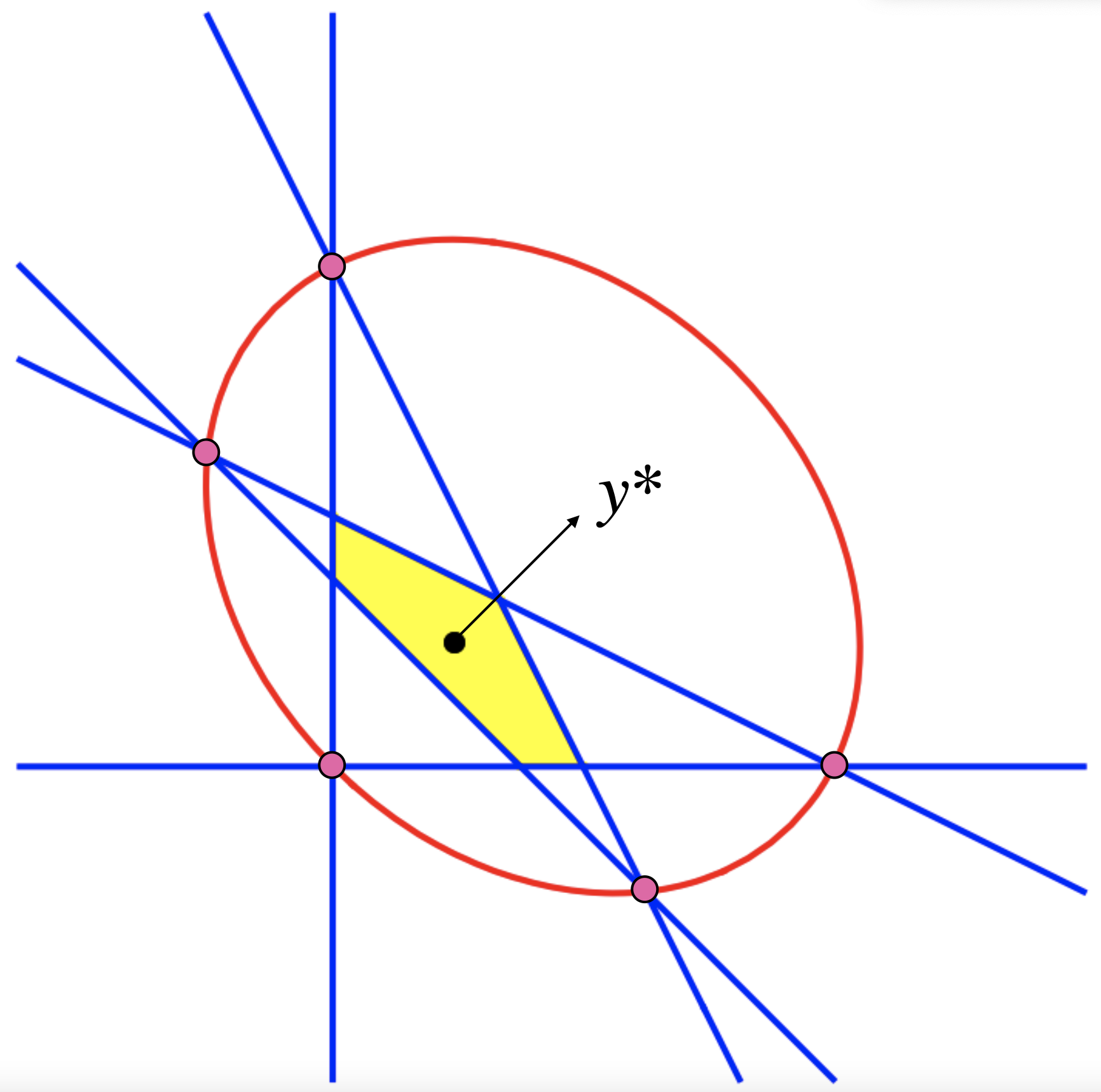}
        \quad \quad \quad \quad 
        \includegraphics[height = 4.5cm]{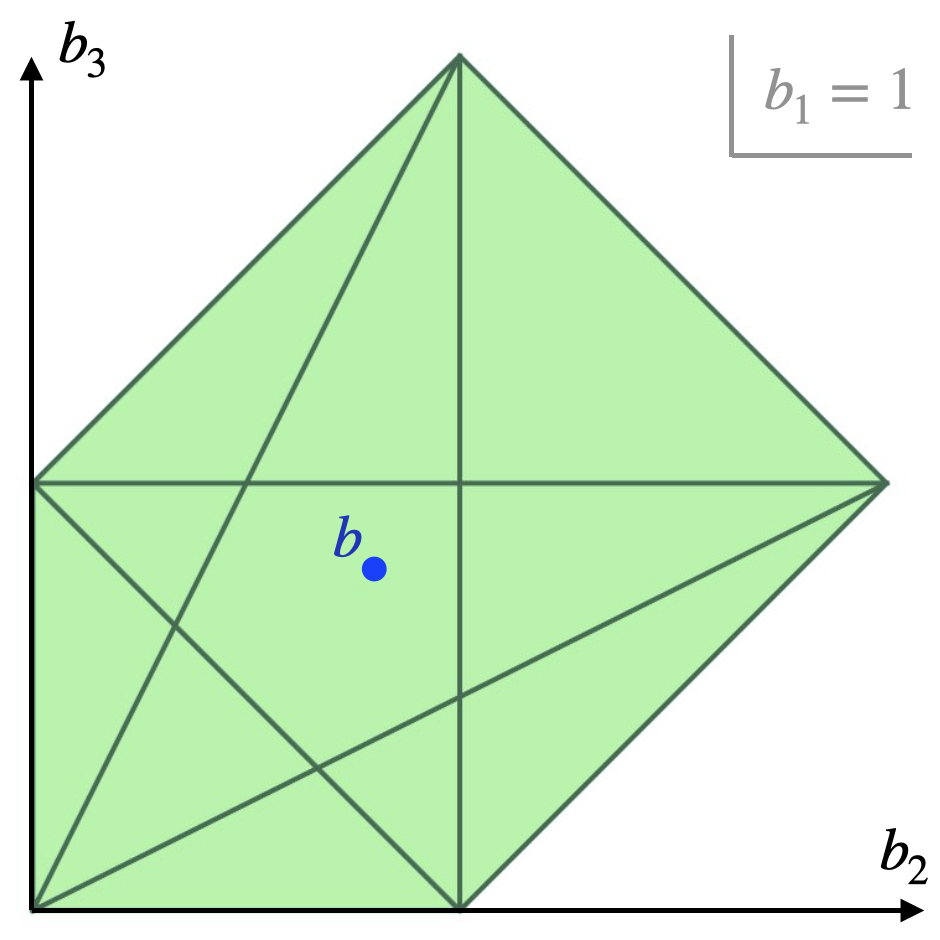}
        \caption{Left: the pentagon $P$ from Example \ref{ex:pentagon1}, together with its adjoint curve (red) and facet hyperplanes (blue). Right: a two-dimensional slice of the chamber complex ${\cal C}_A$.}
        \label{fig:pentagon1}
    \end{figure}
\end{example}

The first motivation for computing Santal\'o points comes from convex optimization \cite{nesterov1994interior}. In that context, $P$ is the feasible region of a linear program, whose optimal solution is typically a vertex of $P$. Interior point methods approximate that vertex by first optimizing a strictly convex (barrier) function. The resulting interior optimizer is then tracked to the optimal vertex by varying a regularization parameter. For more details, see \cite{guler1996barrier,nesterov1994interior}, where \eqref{eq:volrational} is called the universal barrier. For a summary, see the introduction of \cite{sturmfels2024toric}. 

We are interested in how the Santal\'o point varies when the facet hyperplanes of $P$ are translated. 
More precisely, we fix a nonnegative $(d \times n)$-matrix $A \in \mathbb{R}^{d\times n}_{\geq 0}$ of rank $d$, none of whose columns is the zero vector, and consider the fibers of the projection $A: \mathbb{R}^{n}_{\geq 0}\rightarrow \mathbb{R}^d$: 
\[ P_b \, = \, \{ x \in \mathbb{R}^n_{\geq 0} \,: \, Ax = b \}, \quad b \in {\rm pos}(A). \]
Here ${\rm pos}(A)$ is the image of $A: \mathbb{R}^{n}_{>0}\rightarrow \mathbb{R}^d$. If $b$ lies in ${\rm pos}(A)$, then $P_b$ is a polytope of dimension $m = n-d$. A point $x$ in its relative interior defines a full-dimensional polytope $P_b - x$ in the $(n-d)$-dimensional vector space $\ker A \simeq \mathbb{R}^{n-d}$. We define
\begin{equation} \label{eq:Vx} V \, : \, \mathbb{R}^n_{>  0} \, \longrightarrow \, \mathbb{R}_{\geq 0},\quad x \, \longmapsto \, {\rm vol}_{n-d} \, (P_{Ax} - x)^\circ. \end{equation}
This is defined up to a scaling factor, which depends on the choice of basis for $\ker A$. We prove that this global volume function is piecewise rational, meaning that it is a rational function when restricted to certain $n$-dimensional subcones of $\mathbb{R}^{n}_{> 0}$ (Proposition \ref{prop:rational}). These subcones correspond to the cells of the chamber complex $\mathcal{C}_A$ associated to $A$, see for instance \cite{billera1993duality}. Moreover, on each of these subcones, $V$ is homogeneous of degree $d-n$ (Proposition~\ref{prop:rational}). 

Each fiber $P_b$ has a unique Santal\'o point. This defines a natural section of $A: \mathbb{R}^{n}_{>0}\rightarrow \mathbb{R}^d$:
\begin{equation} \label{eq:argmin} x^*(b) \, =\, \underset{x \in {\rm int}(P_b)}{\operatorname{argmin}} \, \, V(x) \,. \end{equation}
The map $x^*: {\rm pos}(A) \rightarrow \mathbb{R}^n_{>0}$ is piecewise algebraic. Its image is called the Santal\'o patchwork. We show that the Santal\'o patchwork is a union of $d$-dimensional basic semi-algebraic sets, one for each $d$-dimensional cell in the chamber complex ${\cal C}_A$ (Corollary \ref{cor:homeo}). We give inequalities for each of its pieces (called Santal\'o patches), and bound the degree of their Zariski closures.  
\begin{example}[$d = 3, n = 5$]\label{ex:AB}
    The pentagon in Example \ref{ex:pentagon1} is the fiber $P_b - x$ for the data 
\begin{equation} \label{eq:Abpentagon}
A \, = \, \begin{pmatrix}
    1 & 1 &  1 & 1 & 1   \\ 
    2 & 1 & 0 & 1 & 0   \\ 
    1 & 2 & 0 & 0 & 1  
\end{pmatrix}, \quad \quad b \, = \, \frac{1}{5}\begin{pmatrix}
    5 \\ 4 \\ 4
\end{pmatrix}, \quad 
x = \frac{1}{5}\begin{pmatrix}
    1 & 1 & 1 & 1 & 1
\end{pmatrix}^T .
\end{equation}
The coordinates $y_1$ and $y_2$ in Example \ref{ex:pentagon1} are with respect to the following basis of $\ker A$: 
\[
B \, = \, \frac{1}{18}\begin{pmatrix}
    5 & -4 & 2 & -6 & 3 \\ 
    -4 & 5 & 2 & 3 & -6
\end{pmatrix}^T.  \]
The columns of $A$ are the vertices of a pentagon in $\mathbb{R}^3$. They define the polyhedral complex shown in Figure \ref{fig:pentagon1} (right). The chamber complex ${\cal C}_A$ is the polyhedral fan over that complex. There are 11 $3$-dimensional cells. Our $b$ lies in the central pentagonal cell. For any $x \in \mathbb{R}^5_{\geq 0}$ such that $Ax$ lies in this cell, Corollary \ref{cor:ratx} gives the following formula for the function $V(x)$: 
\begin{equation} \label{eq:Vxpentagon} 
V(x) \, = \, \frac{3x_1x_2x_3 + 2x_1x_3x_5 + 2x_1x_4x_5 + 2x_2x_3x_4 + 2x_2x_4x_5}{x_1x_2x_3x_4x_5}.
\end{equation}
To match this with \eqref{eq:volpentagon}, use $Ax = b$ and $B^T x = y$ to switch from $x$- to $(b,y)$-coordinates and substitute $b = (1,4/5,4/5)$. A different rational function is needed when $b$ belongs to a different cell, because the combinatorial type of $P_b$ changes. For instance, one checks that for $b = (1,6/5,4/5)$, $P_b$ is a quadrilateral. Each cell in ${\cal C}_A$ gives a patch of the Santal\'o patchwork, which is a 3-dimensional semi-algebraic set in $\mathbb{R}^5_{\geq 0}$. Intersecting this with the 4-dimensional simplex $\{ \sum_{i=1}^5 x_i = 1 \}$ and projecting to $\mathbb{R}^3$, we obtain Figure \ref{fig:SPpentagon}.
\begin{figure}
    \centering
    \includegraphics[height = 4.5cm]{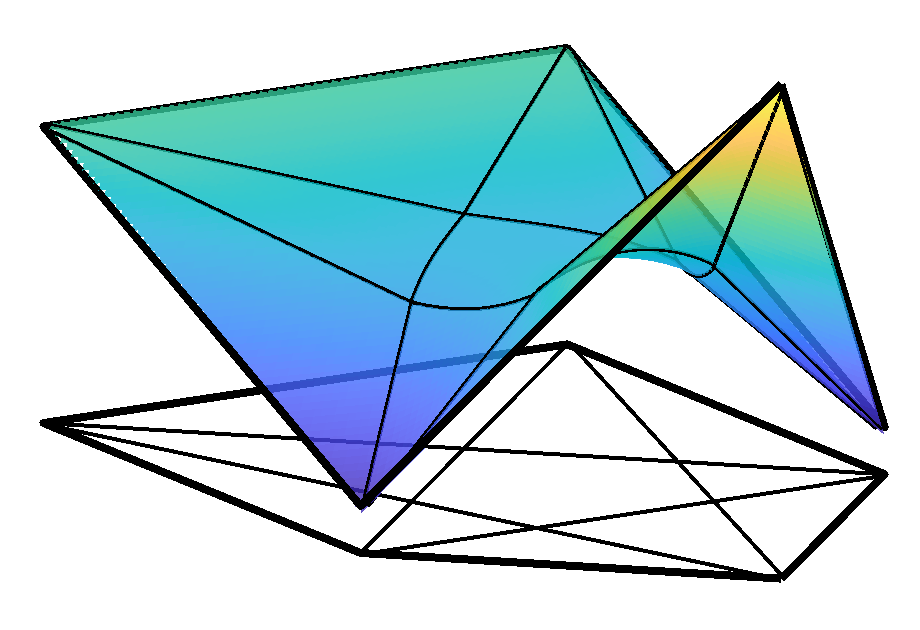} \quad \quad 
    \includegraphics[height = 4.5cm]{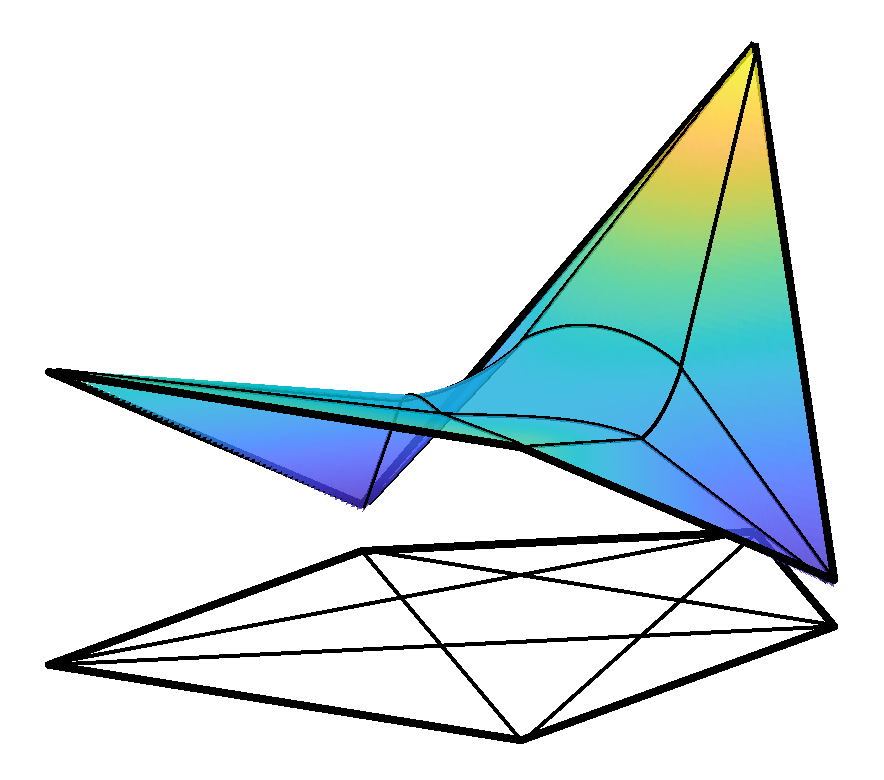}
    \caption{Visualization of the Santal\'o patchwork for $A$ from \eqref{eq:Abpentagon}.}
    \label{fig:SPpentagon}
\end{figure}
\end{example}

Understanding the degrees of Santal\'o patches relies on insights from algebraic statistics \cite{drton2008lectures}. Minimizing the logarithm of the dual volume has the interpretation of maximum likelihood estimation for a particular class of statistical models, called Wachspress models \cite[Section~2]{kohn2020moment}. Every righthand side vector $b \in {\rm pos}(A)$ defines a Wachspress model. The maximum likelihood degree (ML degree) \cite{catanese2006maximum} of this model is constant for generic $b$ in the interior of a cell in the chamber complex. We conjecture that, under mild genericity assumptions, it gives a lower bound for the degree of the corresponding Santal\'o patch, see Conjecture \ref{conj:irred} and Proposition \ref{prop:Iub}. Example \ref{ex:degreeVSMLdeg} gives evidence for the claim that this lower bound is close to the actual degree of the Santal\'o patch. We show how to compute the ML degree numerically, and Conjecture \ref{conj:MLdeg} gives a formula for polygons. A sketch of proof is also included.

Our outline is as follows. Section \ref{sec:2} studies the volume function \eqref{eq:Vx}. Sections \ref{sec:3} and \ref{sec:4} describe the Santal\'o patchwork and its Zariski closure. Section \ref{sec:5} makes the link to Wachspress models. Finally,
in Section \ref{sec:6} we discuss homotopy based methods for computing Santal\'o points. First, we use monodromy to compute the Santal\'o point of some fiber $P_{b_0}$. Next, we compute the Santal\'o point of a new fiber $P_{b_1}$ from that of $P_{b_0}$, such that $b_0$ and $b_1$ belong to the same chamber of ${\cal C}_A$. We use numerical homotopy continuation \cite{sommese2005numerical} to track $x^*(b_0)$ to $x^*(b_1)$ along a smooth path on the Santal\'o patchwork. Our algorithms are implemented in a Julia package \texttt{Santalo.jl}, which is available at 
\url{https://mathrepo.mis.mpg.de/Santalo}.

Our work fits nicely into a broader story of semi-algebraic sets in convex optimization, algebraic statistics and particle physics. Different strictly convex objective functions used in interior point methods give rise to other interesting geometric objects, see \cite{de2012central,sturmfels2024toric}. For the log-barrier function $V(x) = -\sum_{i=1}^n \log x_i$, the role of the Santal\'o patchwork is played by the positive reciprocal linear space associated to the row span of the matrix $A$. The Santal\'o point is replaced by the analytic center. Entropic regularization uses $V(x) = \sum_{i=1}^n  x_i\log x_i - x_i$ and leads naturally to consider the positive toric $d$-fold associated to $A$, with the Birch point being its unique intersection with $P_b$. From a statistical point of view, these scenarios correspond to maximum likelihood estimation for linear models and exponential families respectively. Next to optimization and statistics, the dual volume function \eqref{eq:volrational} shows up in particle physics as the canonical function of $P$, viewed as a positive geometry \cite{arkani2017positive}. This enters in the proof of Propostion \ref{prop:rational}. For some specific polytopes, $V(x)$ is a scattering amplitude \cite{arkani2018scattering}. Recently, dual volumes have been used in the study of toric singularities \cite{moraga2021bounding}.

All of these connections motivate our effort to study the Santal\'o geometry of polytopes. Our work provides new theoretical insights into Santal\'o points, and practical tools for computing them. It leads to several new possible research directions, as summarized in Section~\ref{sec:6}.

\section{Dual volumes of polytopes} \label{sec:2}

To avoid confusion, below we write $Q \subset \mathbb{R}^m$ for a full-dimensional polytope (where, usually, $m = n-d$), and $P_b \subset \mathbb{R}^n$ for the $(n-d)$-dimensional fibers of $A: \mathbb{R}^n_{\geq 0} \rightarrow {\rm pos}(A)$. 

This section describes the dual volume function \eqref{eq:volrational} of a full-dimensional polytope $Q \subset \mathbb{R}^m$.
We start with the numerator of this rational function, called the \emph{adjoint polynomial} $\alpha_Q(y)$. We say that $Q$ is \emph{simple} if each vertex is adjacent to exactly $m$ facets.

Suppose $Q$ is simple and has minimal facet representation 
\begin{equation}\label{eq:minfacets}
   Q = \{y'\in \mathbb{R}^m \,:\, \langle w_i, y'\rangle + c_i \geq 0, \, i=1,\ldots,k\}. 
\end{equation}
Here $w_i \in \mathbb{R}^m$ and $c_i \in \mathbb{R}$. The adjoint polynomial of $Q$, introduced by Warren \cite{warren1996barycentric}, is 
\begin{equation}\label{eq:adjointvol}
    \alpha_Q(y) \, = \,  \mathrm{vol}_m(Q-y)^\circ \cdot \prod\limits_{i=1}^k (\langle w_i, y\rangle + c_i) . 
\end{equation}
For completeness, we include a proof of a convenient formula for $\alpha_Q(y)$. We collect the vectors~$w_i$ in an $m \times k$ matrix $W$ and write $W_I$ for the submatrix of columns indexed by $I \subset \{1,\ldots, k\}$. Let ${\cal V}(Q)$ be the set of vertices of $Q$.
For each~$v \in \mathcal{V}(Q)$, we let~$I(v) = \{i\,:\,\langle w_i, v \rangle +c_i = 0\} \subset \{1,\ldots, k\}$ be the $m$-element index set of the facets containing $v$. 

\begin{proposition} \label{prop:adjformula}
    For a simple full-dimensional polytope $Q \subset \mathbb{R}^m$ with minimal facet representation \eqref{eq:minfacets} the adjoint polynomial $\alpha_Q(y)$ is given by 
    \begin{equation}\label{eq:adjoint}
        \alpha_Q(y) \, = \, \sum\limits_{v \in \mathcal{V}(Q)} |\det W_{I(v)}| \cdot \prod\limits_{i \not\in I(v)}\left(\langle w_i,y\rangle + c_i \right).
    \end{equation}
\end{proposition}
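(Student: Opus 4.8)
The plan is to compute $\mathrm{vol}_m(Q-y)^\circ$ directly, by triangulating the dual polytope from the origin; this is legitimate precisely because $y$ is interior and $Q$ is simple. First I would translate the facet description \eqref{eq:minfacets}. Writing $\ell_i(y) = \langle w_i, y\rangle + c_i$, which is strictly positive for $y \in \mathrm{int}(Q)$, one has $Q - y = \{z \in \mathbb{R}^m : \langle w_i, z\rangle + \ell_i(y) \geq 0,\ i = 1,\dots,k\} = \{z : \langle -w_i/\ell_i(y), z\rangle \leq 1,\ i=1,\dots,k\}$. Since $0 \in \mathrm{int}(Q-y)$, biduality of convex bodies gives $(Q-y)^\circ = \mathrm{conv}\{u_1(y),\dots,u_k(y)\}$ with $u_i(y) := -w_i/\ell_i(y)$, and minimality of the facet representation \eqref{eq:minfacets} guarantees that each $u_i(y)$ is actually a vertex (no redundant points, no repetitions).

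Next I would exploit simplicity. Because $Q$ is simple, $(Q-y)^\circ$ is simplicial, and under the inclusion-reversing face correspondence a vertex $v \in \mathcal{V}(Q)$, which lies on exactly the $m$ facets indexed by $I(v)$, corresponds to the facet $F_v := \mathrm{conv}\{u_i(y) : i \in I(v)\}$ of $(Q-y)^\circ$; as $|I(v)| = m$, this $F_v$ is an $(m-1)$-simplex. Coning from the origin, the full-dimensional simplices $\sigma_v := \mathrm{conv}\bigl(\{0\} \cup \{u_i(y) : i \in I(v)\}\bigr)$ for $v \in \mathcal{V}(Q)$ have pairwise disjoint interiors and cover $(Q-y)^\circ$, so $\mathrm{vol}_m(Q-y)^\circ = \sum_{v \in \mathcal{V}(Q)} \mathrm{vol}_m\, \sigma_v$.

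Then it is a one-line determinant computation: $\mathrm{vol}_m\, \sigma_v$ equals $|\det (u_i(y))_{i \in I(v)}|$ up to the universal normalizing constant $\tfrac{1}{m!}$ fixed by the chosen volume normalization, and $\det(u_i(y))_{i\in I(v)} = (-1)^m \det W_{I(v)} / \prod_{i \in I(v)} \ell_i(y)$, with $\prod_{i\in I(v)}\ell_i(y) > 0$. Multiplying $\mathrm{vol}_m(Q-y)^\circ = \sum_v |\det W_{I(v)}| / \prod_{i \in I(v)}\ell_i(y)$ by $\prod_{i=1}^k \ell_i(y)$ as in \eqref{eq:adjointvol} yields $\alpha_Q(y) = \sum_{v \in \mathcal{V}(Q)} |\det W_{I(v)}| \prod_{i \notin I(v)} \ell_i(y)$ for all $y \in \mathrm{int}(Q)$. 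Finally, since both sides are polynomials in $y$ that agree on the nonempty open set $\mathrm{int}(Q)$, they agree identically on $\mathbb{R}^m$.

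The main obstacle is not an estimate but the combinatorial bookkeeping, which I would want to state carefully: namely (i) that the $u_i(y)$ are exactly the vertices of $(Q-y)^\circ$, without duplication or redundancy — this is exactly where minimality of \eqref{eq:minfacets} enters — and (ii) that simplicity of $Q$ makes $(Q-y)^\circ$ simplicial, so that the $F_v$ are genuine $(m-1)$-simplices tiling $\partial(Q-y)^\circ$ and the origin-cone construction really is a triangulation. Orientation/sign issues in the determinant are harmless because we take absolute values, which is justified by $0 \in \mathrm{int}(Q-y)^\circ$. I would also be explicit that the identity is first derived only on $\mathrm{int}(Q)$ and then propagated to all of $\mathbb{R}^m$ by polynomiality, since $\ell_i(y)$ and the volume formula only have their geometric meaning for interior $y$.
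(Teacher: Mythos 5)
Your proof is correct and follows essentially the same route as the paper: translate $Q$ by $y$, use biduality to express $(Q-y)^\circ$ as the convex hull of the scaled normals, triangulate by coning from the origin over the simplicial facets (indexed by $\mathcal{V}(Q)$ via the vertex--facet correspondence), and sum the simplex volumes as determinants. The extra care you take with the sign of $u_i(y)$, the $1/m!$ normalization, and the passage from $\mathrm{int}(Q)$ to all of $\mathbb{R}^m$ by polynomiality is worth stating but does not change the substance; the paper leaves those points implicit.
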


\begin{proof}
    For $y\in \mathrm{int}(Q)$ the translated polytope $Q-y$ has the following facet representation:
    $$Q-y\, =\, \{y'\in \mathbb{R}^m \,:\,\langle w_i, y'\rangle + (\langle w_i, y\rangle + c_i)\geq 0, \, i=1,\ldots,k\}.$$
    The dual polytope is then simplicial and can be described as
    $$(Q-y)^\circ \, = \,  \mathrm{conv}\left(\left\{\dfrac{w_i}{\langle w_i, y\rangle + c_i},\, i=1,\ldots,k \right\}\right).$$
    We compute its volume as the sum of volumes over pieces of its triangulation:
    \begin{multline*}
        \mathrm{vol}_m(Q-y)^\circ =  \sum\limits_{v\in \mathcal{V}(Q)} \mathrm{vol}_m\left(\mathrm{conv}\left(\{0\} \cup \bigcup\limits_{i \in I(v)} \left\{\dfrac{w_i}{\langle w_i, y\rangle + c_i}\right\}\right)\right) =\\ \sum\limits_{v \in \mathcal{V}(Q)} \left|\det W_{I(v)}\right| \prod_{i\in I(v)}\left(\langle w_i, y \rangle + c_i \right)^{-1}.
    \end{multline*}
    Since by definition $\alpha_Q(y) =\mathrm{vol}_m(Q-y)^\circ \cdot \prod\limits_{i=1}^k (\langle w_i, y\rangle + c_i)$, we get the formula in \eqref{eq:adjoint}.
\end{proof}

To avoid confusion, we point out that what we call the adjoint of $Q$ is the adjoint of the dual polytope $Q^\circ$ in some of the literature \cite{kohn2020projective,warren1996barycentric}. The variety inside $\mathbb{R}^m$ defined by $\alpha_Q$ is the \emph{adjoint hypersurface} associated to $Q$, see \cite{kohn2020projective}. 
When the facet hyperplanes of $Q$ form a simple arrangement (that is, the intersection of any $i$ hyperplanes has codimension $i$), the adjoint hypersurface is the unique hypersurface of minimal degree interpolating the \emph{residual arrangement} of $Q$. This arrangement is the union of all affine spaces that are contained in the intersections of facet hyperplanes but do not contain a face of $Q$ \cite[Theorem 6]{kohn2020projective}. In Figure \ref{fig:pentagon1} (left), the residual arrangement consists of 5 points defining a unique adjoint conic.

We now switch to the setting of the introduction, where $m = n-d$ and the polytope $Q$ arises as a fiber $P_b$ of the linear projection $A : \mathbb{R}^n_{\geq 0} \rightarrow \mathbb{R}^d$ for some $A \in \mathbb{R}^{d \times n}_{\geq 0}$. If $x$ is an interior point of $P_b$, then the translate $P_b-x$ is a full-dimensional polytope inside $\ker A \cong \mathbb{R}^{n-d}$. We are interested in minimizing its dual volume $\mathrm{vol}_{n-d}(P_b-x)^\circ$ with respect to $x$. In order to treat this problem algebraically, we will first project $P_b$ to $\ker A$. To do so, fix an $(n \times (n-d))$-matrix $B$ whose columns span $\ker A$. The projection of $P_b$ is denoted by $Q_b = B^T \cdot P_b$ and the coordinates $y$ on $\ker A$ are induced from $y = B^T x$.

By construction, the matrix obtained by concatenating $A$ and $B^T$ vertically is an $n\times n$ matrix of full rank. It therefore defines an invertible coordinate change 
\begin{equation} \label{eq:coordchange}
 \begin{pmatrix}
    b \\
    y
\end{pmatrix}
\,=\,
\begin{pmatrix}
    A\\
    B^T
\end{pmatrix}
x.
\end{equation}
This means that in order to compute the Santal\'o point $x^*(b)$ of $P_b$, it is sufficient to compute the Santal\'o point $y^*(b)$ of $Q_b$ and then apply the inverse coordinate change:
\begin{equation} \label{eq:bytox}
    x^*(b) = \begin{pmatrix}
    A \\
    B^T
\end{pmatrix}^{-1}
\begin{pmatrix}
    b\\
    y^*(b)
\end{pmatrix}.
\end{equation}

We will now study the dual volume function ${\rm vol}_{n-d} \, (Q_b-y)^\circ$ for the polytope $Q_b$. Our aim is to show that this is a piecewise rational function of $y$ and $b$. A key role will be played by the \emph{chamber complex} $\mathcal{C}_A$ of $\mathrm{cone}(A)=\overline{\mathrm{pos}(A)}$, the conical hull of the columns of~$A$. 

Let $a_i$ denote the $i$-th column of $A$. For a nonempty subset $\sigma \subset [n]=\{1, \ldots ,n\}$ we define $A_\sigma = \{a_i: i\in \sigma\}$ to be the submatrix with columns indexed by $\sigma$. 

\begin{definition} \label{def:chambercomplex}
    For $b \in \mathrm{cone}(A)$, define the \emph{chamber} $C_b:=\bigcap_{{\rm cone}(A_\sigma) \ni b} \mathrm{cone}(A_\sigma)$. The \emph{chamber complex} of $A$~is the collection of all such chambers: 
    $$\mathcal{C}_A:=\{C_b:b\in \mathrm{cone}(A)\}.$$
    In the rest of this article, full-dimensional chambers are called \emph{cells} of~$\mathcal{C}_A$.
\end{definition}
For more details on the chamber complex and its properties, see \cite{billera1993duality} and \cite[Chapter 5]{de2010triangulations}.

\begin{proposition}  \label{prop:simple}
    For each $b$ in the interior of a cell $C \in \mathcal{C}_A$, the $(n-d)$-dimensional polytopes $P_b$ and $Q_b$ are simple, and so are their facet hyperplane arrangements. As $b$ varies over ${\rm int}(C)$, the combinatorial types of $P_b$ and $Q_b$ are equal and constant.
\end{proposition}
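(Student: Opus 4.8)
The plan is to reduce the two claims about $P_b$ to the combinatorics of basic feasible solutions, derive everything from the single observation that a point in the interior of a cell avoids all "low-dimensional" cones of $\mathcal{C}_A$, and then transfer the conclusions to $Q_b$ through the affine isomorphism induced by $B^T$.

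\emph{The key observation.} First I would record the following consequence of Definition \ref{def:chambercomplex}: if $b\in\mathrm{int}(C)$ for a full-dimensional cell $C$, then $b$ lies in no cone $\mathrm{cone}(A_\rho)$ with $\dim\mathrm{cone}(A_\rho)\le d-1$. Indeed such a cone is one of the cones in the intersection defining the chamber $C_b$, so $b\in\mathrm{cone}(A_\rho)$ would force $C_b\subseteq\mathrm{cone}(A_\rho)$ and hence $\dim C_b\le d-1<\dim C$. Dually, every $d$-subset $\sigma$ with $b\in\mathrm{cone}(A_\sigma)$ has $A_\sigma$ invertible and $b\in\mathrm{int}(\mathrm{cone}(A_\sigma))$, and by Carath\'eodory $b$ lies in $\mathrm{int}(\mathrm{cone}(A_\sigma))$ for at least one such $\sigma$.

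\emph{Simplicity of $P_b$ and of its facet arrangement.} The vertices of $P_b$ are the basic feasible solutions $x^\sigma$ (with $x^\sigma_\sigma=A_\sigma^{-1}b\ge 0$, $x^\sigma_i=0$ off $\sigma$), and $x^\sigma$ lies on the facet hyperplane $\{x_i=0\}$ exactly for the $n-d$ indices $i\notin\sigma$ together with any $i\in\sigma$ with $(A_\sigma^{-1}b)_i=0$. So $P_b$ is non-simple iff some $x^\sigma$ is degenerate, which by $b=\sum_{i\in\sigma\setminus\{j\}}(A_\sigma^{-1}b)_i\,a_i\in\mathrm{cone}(A_{\sigma\setminus\{j\}})$ would place $b$ in a cone spanned by $d-1$ columns — impossible by the key observation. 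For the facet hyperplane arrangement, note that $\{x_i=0\}\cap\mathrm{aff}(P_b)$ is a genuine facet iff $b\in\mathrm{int}(\mathrm{cone}(A_{[n]\setminus\{i\}}))$, and for $J$ a set of facet indices the common locus of the corresponding facet hyperplanes is $\{x:Ax=b,\ x_i=0\ (i\in J)\}$, an affine space of dimension $(n-|J|)-\mathrm{rank}(A_{[n]\setminus J})$ when it is nonempty, whose part at infinity (working in the projective closure $\mathbb{P}^{n-d}$ of $\mathrm{aff}(P_b)$, which is where the residual arrangement lives) is $\mathbb{P}(\ker A_{[n]\setminus J})$. I would show these have the expected dimensions by deducing $\mathrm{rank}(A_{[n]\setminus J})\ge d-1$ — with a rank drop below $d$ only when the affine locus is empty — from the facts that $\mathrm{rank}(A_{[n]\setminus\{i\}})=d$ for each $i\in J$ and that a further rank drop would put $b$ on a wall. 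Pinning this down — tracking exactly which $J$ occur as facet-index sets and excluding $\mathrm{rank}(A_{[n]\setminus J})\le d-2$ — is the step I expect to require the most care; everything else is routine.

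\emph{Transfer and constancy.} Concatenating $A$ and $B^T$ vertically yields an invertible $n\times n$ matrix (see \eqref{eq:coordchange}), so $B^T$ restricts to a linear isomorphism $\ker A\cong\mathbb{R}^{n-d}$ and, since $\mathrm{aff}(P_b)$ is a coset of $\ker A$, to an affine isomorphism $\mathrm{aff}(P_b)\cong\mathbb{R}^{n-d}$ carrying $P_b$ onto $Q_b$ and the facet hyperplanes onto the facet hyperplanes. Affine isomorphisms preserve simplicity, the face lattice, and (projective) transversality, so $Q_b$ inherits everything and is combinatorially isomorphic to $P_b$. Finally, by the defining property of $\mathcal{C}_A$ (see \cite{billera1993duality} and \cite[Chapter~5]{de2010triangulations}) each cone $\mathrm{cone}(A_\sigma)$ is a union of cells, so for $b$ in the relatively open set $\mathrm{int}(C)$ the conditions $b\in\mathrm{cone}(A_\sigma)$ and $b\in\mathrm{int}(\mathrm{cone}(A_\sigma))$ do not depend on $b$. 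Hence the set of $\sigma$ with $b\in\mathrm{int}(\mathrm{cone}(A_\sigma))$ — which, $P_b$ being simple, indexes the vertices of $P_b$ with incidences $v^\sigma\in\{x_i=0\}\iff i\notin\sigma$ — is constant on $\mathrm{int}(C)$, and a simple polytope is determined up to combinatorial isomorphism by its vertex--facet incidences; so the combinatorial types of $P_b$ and $Q_b$ are constant on $\mathrm{int}(C)$.
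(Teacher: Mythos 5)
Your proof follows the same basic strategy as the paper — simplicity of $P_b$ via the combinatorics of basic feasible solutions, transfer to $Q_b$ through the affine isomorphism induced by $B^T$ — but is considerably more self-contained. Your ``key observation'' makes explicit the non-degeneracy that the paper leaves implicit behind a citation to \cite[Theorem~2.4]{bertsimas1997introduction}, which by itself only guarantees \emph{at least} $n-d$ zeros at a vertex. More substantially, your constancy argument (each $\mathrm{cone}(A_\sigma)$ is a union of chambers, so the index sets $\sigma$ with $b\in\mathrm{int}\,\mathrm{cone}(A_\sigma)$, and hence the vertex--facet incidences, are constant on $\mathrm{int}(C)$) replaces the paper's appeal to \cite[Theorem~18]{alexandr2023maximum} with a direct proof. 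That is a genuine improvement in self-containment.

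On the one point you flag as requiring care — simplicity of the facet hyperplane arrangement — your sketch does have a real hole: your key observation rules out $b\in\mathrm{cone}(A_\rho)$ for low-dimensional cones, but to exclude an unexpected nonempty intersection $L_J$ with $\mathrm{rank}(A_{[n]\setminus J})<d$ you would need $b\notin\mathrm{span}(A_{[n]\setminus J})$, and membership in a linear span is not the same as membership in a cone (a hyperplane $\mathrm{span}(A_{[n]\setminus J})$ can pass through the interior of $\mathrm{cone}(A)$ without being covered by walls of the form $\mathrm{cone}(A_\rho)$ that your observation addresses). You would need to argue separately that the hyperplanes $\mathrm{span}(A_\tau)$ are themselves walls of $\mathcal{C}_A$, or that $b\in\mathrm{int}(C)$ lies on none of them. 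To be fair, the paper dismisses this step with ``for essentially the same reason,'' so you are not alone in treating it lightly; but since you explicitly promise to ``pin this down,'' the gap is worth closing rather than just noting.
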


\begin{proof}
    Since $b$ is in $\mathrm{pos}(A)$, the interior of $\mathrm{cone}(A)$, the polytope $P_b$ has dimension $n-d$. Since every vertex $v$ of $P_b$ is a solution of $Av = b$ with $v_i = 0$ for $n-d$ entries of $v$ \cite[Theorem 2.4]{bertsimas1997introduction}, it is on exactly $n-d$ facet hyperplanes, and the polytope $P_b$ is simple. For essentially the same reason, the facet hyperplane arrangement of $P_b$ is simple for any $b\in {\rm int}(C)$.
    
    The affine span of $P_b$ is parallel to $\ker A$. The matrix $B$ whose columns span $\ker A$ defines a projection to $\ker A$, and the projected polytope $Q_b = B^T \cdot P_b$ has the same dimension and combinatorial type as $P_b$. The fact that the combinatorial type of $P_b$ stays the same as $b$ varies over a given chamber $C \in \mathcal{C}_A$ appears as Theorem 18 in \cite{alexandr2023maximum}.
\end{proof}

\begin{example}
The columns of the matrix $A$ from Example \ref{ex:AB} define the vertices of a pentagon shown in Figure \ref{fig:pentagon1} (right).
The positive hull $\mathrm{pos}(A)$ is a cone over this pentagon, and the chamber complex $\mathcal{C}_A$ is the fan over the polyhedral complex obtained by taking the common refinement of all triangulations of this pentagon.
The chamber complex has $11$ cells: one cone over a pentagon and $10$ cones over triangles.
When $b$ is in the central cell, the polytope $P_b$ is itself a pentagon.
When $b$ is in one of the five cells that share a facet with the central one, $P_b$ is a quadrilateral.
Finally, when $b$ is one of the five remaining cells, $P_b$ is a triangle.  The following code snippet computes the chamber complex in \texttt{Macaulay2} \cite{M2}. 
\begin{minted}{julia}
A = matrix{{1,1,1,1,1},{2,1,0,1,0},{1,2,0,0,1}}
B = {{5,-4},{-4,5},{2,2},{-6,3},{3,-6}}
F = gfanSecondaryFan B
all_fulldim_cones = cones(n,F)
all_rays = rays(F)
matrices = apply(all_fulldim_cones, s -> A*submatrix(all_rays,s))
cells_CA = apply(matrices,i->posHull(i))
\end{minted}
The list \texttt{cells\_CA} contains all cells of ${\cal C}_A$. Our computation follows \cite[Remark~21]{alexandr2023maximum}.
\end{example}

\begin{proposition} \label{prop:rational}
    Let $C \in \mathcal{C}_A$ be a cell. Let $n_C$ be the number of facets of $P_b$ for $b\in {\rm int}(C)$ and let $Q_b = B^T \cdot P_b$, for some kernel matrix $B \in \mathbb{R}^{n \times (n-d)}$ of $A$. The function $f(b,y) = \mathrm{vol}_{n-d}(Q_b-y)^\circ$ is a homogeneous rational function on 
    \[ \{(b,y) \,:\, b \in C \cap {\rm pos}(A),\, y \in {\rm int}(Q_b) \},\]
    of degree $d-n$. Its numerator has degree $d-n+n_C$ and the denominator has degree $n_C$.
\end{proposition}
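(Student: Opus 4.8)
The plan is to leverage the adjoint formula from Proposition \ref{prop:adjformula} together with the fact that, by Proposition \ref{prop:simple}, the combinatorial type of $P_b$ (hence of $Q_b$) is fixed as $b$ ranges over ${\rm int}(C)$. Fixing a cell $C$, let $n_C$ be the number of facets; the vertex set $\mathcal{V}(Q_b)$ and the facet-incidence data $I(v)$ are therefore the same for all $b \in {\rm int}(C)$, so one may talk about ``the'' facet-index sets $I(v) \subset [n_C]$ combinatorially.

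First I would write down the affine-linear facet functionals of $Q_b$ explicitly as functions of $(b,y)$. Starting from the defining inequalities $x \geq 0$ of $P_b \subset \{Ax = b\}$, the $i$-th inequality is $x_i \geq 0$; after the invertible coordinate change \eqref{eq:coordchange} we have $x = \binom{A}{B^T}^{-1}\binom{b}{y}$, so each $x_i$ is an affine-linear function $\ell_i(b,y)$, jointly linear in $(b,y)$, and in fact linear homogeneous. Those $i$ for which the inequality $\ell_i(b,y) \geq 0$ is facet-defining on ${\rm int}(C)$ are precisely the $n_C$ facets; relabel them $1,\dots,n_C$. Then Warren's formula \eqref{eq:adjointvol}–\eqref{eq:adjoint}, applied to the simple polytope $Q_b$ (simplicity is guaranteed by Proposition \ref{prop:simple}), gives
\[
f(b,y) \;=\; \mathrm{vol}_{n-d}(Q_b - y)^\circ \;=\; \frac{\displaystyle\sum_{v \in \mathcal{V}(Q_b)} c_v \prod_{i \notin I(v)} \ell_i(b,y)}{\displaystyle\prod_{i=1}^{n_C} \ell_i(b,y)},
\]
where $c_v = |\det W_{I(v)}|$ is a constant (independent of $b$, since only the directions $w_i$ enter, and those are fixed by the combinatorial type and the matrix $B$). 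Here one must check that the $\ell_i$ appearing are genuinely the translated facet functionals of $Q_b - y$, which follows because translating $Q_b$ by $y$ shifts the constant term of each facet functional exactly as in the proof of Proposition \ref{prop:adjformula}; equivalently, $\ell_i(b,y)$ already absorbs both the $b$- and $y$-dependence of the $i$-th constraint.

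The degree count is then immediate from this formula. The denominator is a product of $n_C$ functions, each linear homogeneous in $(b,y)$, so it has degree $n_C$. Each summand in the numerator is a product of $n_C - (n-d)$ linear forms, hence has degree $n_C - (n-d) = n_C + d - n$; summing terms of equal degree preserves the degree, so the numerator has degree $d - n + n_C$. Consequently $f$ is rational of degree $(d-n+n_C) - n_C = d-n$, and it is homogeneous of that degree because numerator and denominator are each homogeneous. Finally, $f$ is well-defined (denominator nonvanishing) on the stated domain because for $b \in C \cap {\rm pos}(A)$ and $y \in {\rm int}(Q_b)$ every facet functional $\ell_i(b,y)$ is strictly positive, so no cancellation issue arises.

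The one genuinely delicate point — and the step I expect to be the main obstacle — is justifying that Warren's adjoint identity, which was derived for a single fixed simple polytope, can be applied uniformly with $(b,y)$ treated as indeterminates: that is, that the right-hand side above is literally the same rational function for every $b \in {\rm int}(C)$, rather than merely agreeing numerically at each $b$. This is where Proposition \ref{prop:simple} does the real work: constancy of the combinatorial type means the index sets $I(v)$ and the constants $c_v$ do not jump, so the expression is a single rational function; and the fact that $\mathrm{vol}_{n-d}(Q_b-y)^\circ$ is itself continuous (indeed real-analytic) in $(b,y)$ on the open set in question lets one upgrade pointwise equality to equality of rational functions on the (irreducible) domain. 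An alternative route to this same conclusion, hinted at in the introduction, is to interpret $f(b,y)$ as the canonical function of the positive geometry $Q_b - y$ and invoke the rationality and recursive structure of canonical forms directly; either way, the homogeneity and the two degree counts follow from the explicit formula above.
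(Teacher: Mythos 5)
Your proof is correct and rests on the same core identity as the paper's — Warren's adjoint formula from Proposition~\ref{prop:adjformula} applied to $Q_b$, with the facet functionals $\ell_i(b,y) = c_i(b) + \langle w_i, y\rangle$ read off from the linear change of coordinates \eqref{eq:coordchange}, and Proposition~\ref{prop:simple} guaranteeing that the combinatorial data $I(v)$, the normal directions $w_i$, and hence the constants $|\det W_{I(v)}|$ stay fixed over ${\rm int}(C)$. The degree bookkeeping is then immediate and matches the paper's.

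The one substantive difference concerns the normalization constant. The paper begins from the weaker ansatz \eqref{eq:volrational} with an a priori unknown prefactor $\gamma(b)$, and then invests the second half of its proof in showing $\gamma(b)$ is a nonzero \emph{constant}, via the theory of positive geometries: the dual volume is the canonical function of $Q_b$, and taking iterated residues at vertices forces $\gamma(b) = \pm 1$. You sidestep this entirely by observing that Proposition~\ref{prop:adjformula} is already an \emph{exact} identity — combining \eqref{eq:adjointvol} with \eqref{eq:adjoint} gives
\begin{equation*}
\mathrm{vol}_{n-d}(Q_b - y)^\circ \, = \, \frac{\sum_{v \in \mathcal{V}(Q_b)} |\det W_{I(v)}| \prod_{i \notin I(v)} \ell_i(b,y)}{\prod_{i \in \mathcal{F}_C} \ell_i(b,y)}
\end{equation*}
with no free scalar, so there is nothing left to pin down. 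This is genuinely more elementary; the paper's residue detour appears to be motivated by a desire to foreground the positive geometry interpretation, which is used nowhere else in this particular proof. The point you flag as ``delicate'' (upgrading a family of pointwise identities to an identity of rational functions) is in fact not delicate: since the combinatorial type and the matrices $W_{I(v)}$ are constant on ${\rm int}(C)$, the right-hand side is literally one and the same rational expression for every such $b$, and the identity holds for each $b$ by Proposition~\ref{prop:adjformula} directly. One small omission relative to the paper: the statement is claimed for $b \in C \cap {\rm pos}(A)$, not just $b \in {\rm int}(C)$; the paper dispatches the boundary by continuity, and you should do the same explicitly since $P_b$ need not be simple on $\partial C$.
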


\begin{proof}
We prove the statement for $b \in {\rm int}(C)$. The result extends to $b \in C \cap {\rm pos}(A)$ by continuity. The dual volume function can be expressed as follows: 
$$
f(b,y) \, = \, {\rm vol}_{n-d} \, (Q_b-y)^\circ \, = \, \gamma(b) \cdot \frac{\alpha(b,y)}{\ell_1(b,y) \cdot \cdots \cdot \ell_{n_C}(b,y)}.
$$
Here $\gamma$ is a nonzero function of $b$, $\ell_i(b,y) = 0$ is a linear equation defining the $i$-th facet hyperplane of $Q_b$, and $\alpha(b,y)$ is the adjoint polynomial of $Q_b$, see \eqref{eq:volrational}.     
The proposition will follow from analyzing these functions. 
By construction, the $\ell_i(b,y)$ can be chosen as $n_C$ of the (homogeneous) linear entries of the following vector:
\begin{equation} \label{eq:faceteqs}
\begin{pmatrix}
    A \\
    B^T
\end{pmatrix}^{-1}
\begin{pmatrix}
    b\\
    y
\end{pmatrix}.
\end{equation}
We denote these by $\ell_i(b,y)= c_i(b)+\langle w_i,y\rangle$, where $w_i \in \mathbb{R}^{n-d}$ and $c_i(b)$ are homogeneous linear forms in $b$. By Proposition \ref{prop:simple}, $Q_b$ is a simple polytope. Hence, we can apply \eqref{eq:adjoint} to compute the adjoint polynomial $\alpha(b,y)$: 
$$
\alpha(b,y) =  \sum\limits_{v \in {\cal V}(Q_b)} \left(|\det W_{I(v)}| \cdot \prod\limits_{i \not\in I(v)}\left(c_i(b) + \langle w_i,y\rangle\right)\right).
$$
Since $Q_b$ is simple, each vertex is adjacent to exactly $n-d$ facets. 
This means that, up to the prefactor, $\alpha(b,y)$ is a nonzero sum of homogeneous polynomials of degree $n_C-(n-d)$. 
    We have now determined the function ${\rm vol}_{n-d} \,(Q_b-y)^\circ$ up to an overall scaling by $\gamma(b)$. The proposition is proved once we show that $\gamma(b) \in \mathbb{R}\setminus \{0\}$ is a constant. For this, we rely on the theory of \emph{positive geometries} \cite{arkani2017positive,lam2022invitation}. Since the dual volume is the canonical function of $Q_b$ as a positive geometry \cite[Theorem 3]{lam2022invitation}, the residues of this function at the vertices of $Q_b$ must be equal to $\pm 1$ for any $b\in C$. Taking the iterated residue at $u \in {\cal V}(Q_b)$ results in
    $$
    \operatorname{res}\limits_{u} {\rm vol}_{n-d} \, (Q_b-y)^\circ \, = \, \gamma(b)\, \kappa_u \, \dfrac{\alpha(b,u)}{\prod\limits_{i \not \in I(u)}(c_i(b)+\langle w_i, u\rangle)} \, = \, \pm 1, 
     $$
   where $\kappa_u = (\det W_{I(u)})^{-1} \in \mathbb{R}\setminus \{0\}$. Using the fact that $\alpha(b,u)$ equals
   $$\alpha(b,u) \, = \, |\det W_{I(u)}| \cdot \prod\limits_{i \not\in I(u)}\left(c_i(b) + \langle w_i,u\rangle\right),$$ 
   we see that $\gamma(b) = \pm (\det W_{I(u)}/|\det W_{I(u)}|) = \pm 1$ is indeed a nonzero constant.
\end{proof}
In $x$-coordinates, the proof of Proposition \ref{prop:rational} leads to nice expressions like \eqref{eq:Vxpentagon} for the dual volume $V(x)$ from \eqref{eq:Vx}. For any $b \in {\rm int}(C)$, let ${\cal F}_C \subset [n]$ be the indices of the entries of \eqref{eq:faceteqs} which correspond to facets of $Q_b$ and, for each vertex of $Q_b$, let $I(v) \subset {\cal F}_C$ be the set of indices of facets containing $v$. These sets are independent of the choice of $b \in {\rm int}(C)$. The set of all index sets $I(v)$ records the vertices of $Q_b$ for $b \in \mathrm{int}(C)$. We denote it by ${\cal V}_C$. For an index set $I \subset [n]$, we write $x_I = \prod_{i\in I} x_i$ for the corresponding product of $x$-variables. Since $A \cdot W = 0$, we have $\det W_{I(v)}  = \pm \gamma \det A_{[n]\setminus I(v)}$ for some $\gamma \in \mathbb{R}$, which shows the following.
\begin{corollary} \label{cor:ratx}
    Let $C \in {\cal C}_A$ be a cell. The restriction of the dual volume function $V(x) = {\rm vol}_{n-d}(B^T \cdot P_{Ax} - B^T \cdot x)$ to the cone $\{x \in \mathbb{R}^n_{>0} \, : \, Ax \in C\}$ is given by 
    \[ V_C(x) \, = \, \gamma \cdot \frac{\sum_{I(v) \in {\cal V}_C} | \det A_{[n] \setminus I(v)}|  \cdot x_{{\cal F}_C \setminus I(v)}}{x_{{\cal F}_C}}\]
    for some positive constant $\gamma$ which depends on the choice of $B$. 
\end{corollary}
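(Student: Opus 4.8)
The plan is to transport the rational expression for the dual volume, obtained in the proof of Proposition~\ref{prop:adjformula} and reused in Proposition~\ref{prop:rational}, through the invertible change of coordinates \eqref{eq:coordchange}. Fix $b \in {\rm int}(C) \cap {\rm pos}(A)$ and write $M = \begin{pmatrix} A \\ B^T \end{pmatrix}$, an invertible $n\times n$ matrix, with $M^{-1} = (N_1 \mid N_2)$ split so that $N_1$ has the $d$ ``$b$-columns'' and $N_2$ has the $n-d$ ``$y$-columns''. By \eqref{eq:faceteqs}, the $i$-th entry of $M^{-1}\begin{pmatrix} b \\ y\end{pmatrix}$ is the affine-linear form $\ell_i(b,y) = c_i(b) + \langle w_i, y\rangle$, which for $i \in {\cal F}_C$ is a facet inequality of $Q_b$; substituting $\begin{pmatrix} b \\ y\end{pmatrix} = Mx$ gives $M^{-1}Mx = x$, so $\ell_i(Ax, B^Tx) = x_i$ identically. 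The triangulation computation in the proof of Proposition~\ref{prop:adjformula} then yields, for $x \in \mathbb{R}^n_{>0}$ with $Ax \in {\rm int}(C)$,
\[
V(x) \;=\; {\rm vol}_{n-d}(B^T\cdot P_{Ax} - B^T\cdot x)^\circ \;=\; \kappa \sum_{I(v) \in {\cal V}_C} |\det W_{I(v)}| \prod_{i \in I(v)} \ell_i(Ax,B^Tx)^{-1} \;=\; \kappa \sum_{I(v) \in {\cal V}_C} \frac{|\det W_{I(v)}|}{x_{I(v)}},
\]
where $\kappa>0$ is the fixed scalar implicit in the normalization of $\mathrm{vol}_{n-d}$ on $\ker A$, and where ${\cal V}_C$, ${\cal F}_C$ and the index sets $I(v)\subset{\cal F}_C$ do not depend on $b\in{\rm int}(C)$ (this was observed before the statement, using Proposition~\ref{prop:simple}). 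Multiplying numerator and denominator by $x_{{\cal F}_C}$ rewrites this as $\kappa\,\big(\sum_{I(v)}|\det W_{I(v)}|\, x_{{\cal F}_C\setminus I(v)}\big)/x_{{\cal F}_C}$, using $x_{{\cal F}_C}/x_{I(v)} = x_{{\cal F}_C\setminus I(v)}$.

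The remaining task is to replace each $|\det W_{I(v)}|$ by a uniform multiple of $|\det A_{[n]\setminus I(v)}|$. The matrix $W$ whose $i$-th column is $w_i$ is, by the description just given, the transpose of the submatrix of $M^{-1}$ on the ``$y$-columns'', so $W_{I(v)}$ is the transpose of the square submatrix $(N_2)_{I(v)}$ of $N_2$ on the rows $I(v)$, whence $\det W_{I(v)} = \det (N_2)_{I(v)}$. From $MM^{-1}=I_n$ we get $A N_2 = 0$, so the maximal minors of $N_2$ are the Pl\"ucker coordinates of $\ker A = ({\rm rowspan}\,A)^\perp$. Concretely, Jacobi's identity for the minors of an inverse matrix gives, for every $(n-d)$-subset $J\subseteq[n]$,
\[
\det (N_2)_J \;=\; \pm\,\frac{\det\big(M_{\{1,\dots,d\},\,[n]\setminus J}\big)}{\det M} \;=\; \pm\,\frac{\det A_{[n]\setminus J}}{\det M},
\]
because the first $d$ rows of $M$ are the rows of $A$. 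Taking absolute values, $|\det W_{I(v)}| = |\det M|^{-1}\,|\det A_{[n]\setminus I(v)}|$ for every vertex.

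Combining the two displays gives $V(x) = \gamma\cdot\big(\sum_{I(v)\in{\cal V}_C}|\det A_{[n]\setminus I(v)}|\, x_{{\cal F}_C\setminus I(v)}\big)/x_{{\cal F}_C}$ with $\gamma = \kappa\,|\det M|^{-1}>0$, a positive constant depending on $B$ through $\det\begin{pmatrix}A\\ B^T\end{pmatrix}$; this is the asserted formula on $\{x\in\mathbb{R}^n_{>0}:Ax\in{\rm int}(C)\}$. Two small points finish the argument: every $x\in\mathbb{R}^n_{>0}$ has all coordinates positive, hence lies in the relative interior of $P_{Ax}$, so $V(x)$ is genuinely the dual volume there; and the identity extends from ${\rm int}(C)$ to all of $\{x\in\mathbb{R}^n_{>0}:Ax\in C\}$ by continuity, exactly as in Proposition~\ref{prop:rational}. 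The one non-routine ingredient is the minor-complementarity (Jacobi) identity, but it is classical; I expect the only real care needed to be bookkeeping — placing the ``$y$-block'' of $M^{-1}$ correctly in the identity, and noting that all signs are irrelevant since everything is taken in absolute value and the final function is a sum of positive terms on the open cone.
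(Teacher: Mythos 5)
Your proof is correct and takes essentially the same approach as the paper: change to $x$-coordinates via the identity $\ell_i(Ax, B^Tx) = x_i$ coming from $M^{-1}M = I$, then replace $|\det W_{I(v)}|$ by a uniform constant multiple of $|\det A_{[n]\setminus I(v)}|$. The paper states this determinant relation in one line as a consequence of $A\cdot W = 0$; your Jacobi (complementary-minor) computation is exactly the argument hidden behind that remark.
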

We conclude this section by using Proposition \ref{prop:rational} to derive the degree bound for the algebraic boundary of an important class of objects in convex geometry, the so called \emph{Santal\'o regions}. These are defined in \cite{meyer1998santalo} for an arbitrary convex body $K$ and any $a \in \mathbb{R}_{>0}$:
$$K_a:=\{x \in \mathrm{int}(K) : \mathrm{vol}(K-x)^\circ -  \mathrm{vol}(K-x^*)^\circ \leq a\},$$
where $x^*$ is the Santal\'o point of $K$. When $K$ is a polytope, the dual volume function is rational, and $K_a$ is a semi-algebraic set. 
When $K$ is simple, Proposition \ref{prop:rational} says that the algebraic boundary of each Santal\'o region has degree $\leq n_C$, the number of facets  of $K$.

\section{The Santal\'o patchwork} \label{sec:3}

As shown in the previous Section, the dual volume function $f(b,y) = \mathrm{vol}_{n-d}(Q_b-y)^\circ$ is a piecewise rational function in $b$ and $y$, with one piece $f_C(b,y)$ per chamber $C \in \mathcal{C}_A$.
As noted in the Introduction, for a fixed $b$ this function in strictly convex with respect to $y$ on the interior of $Q_b$, and therefore attains a unique minimum at $y^*(b)$, which is the Santal\'o point of $Q_b = B^T \cdot P_b$. The Santal\'o point $x^*(b)$ of $P_b$ is then recovered via the linear change of coordinates given in  \eqref{eq:bytox}.
In this section we introduce the \emph{Santal\'o patchwork}, a semi-algebraic set keeping track of the Santal\'o points $x^*(b)$ for all $b \in \mathrm{pos}(A)$. 

\begin{definition} \label{def:SPA}
    The \emph{Santal\'o patchwork} ${\rm SP}(A)$ of $A \in \mathbb{R}_{\geq 0}^{d \times n}$ is the image of the map $\phi: \mathrm{pos}(A) \to \mathbb{R}^n_{>0}$, which sends $b$ to the Santal\'o point $x^*(b) = \arg \min_{x \in P_b} {\rm vol}_{n-d}(Q_b - B^Tx)^\circ$.
\end{definition}

\begin{proposition}\label{prop:homeo}
    The map $\phi$ from Definition \ref{def:SPA} is a homeomorphism onto $\mathrm{SP}(A)$.  
\end{proposition}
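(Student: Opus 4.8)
The plan is to show that $\phi$ is a continuous bijection onto $\mathrm{SP}(A)$ with continuous inverse, exhibiting the inverse explicitly as (a restriction of) the linear map $A$. First I would observe that, by definition, $\phi(b) = x^*(b) \in P_b$, so $A \cdot \phi(b) = b$ for every $b \in \mathrm{pos}(A)$. Hence $A|_{\mathrm{SP}(A)} : \mathrm{SP}(A) \to \mathrm{pos}(A)$ is a left inverse of $\phi$, which immediately gives that $\phi$ is injective. Surjectivity onto $\mathrm{SP}(A)$ is true by definition of the image, so $\phi$ is a bijection, and its inverse is the restriction of the linear (in particular continuous) map $A$ to $\mathrm{SP}(A)$.

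It remains to prove that $\phi$ itself is continuous. This is the main obstacle, and I would handle it via a standard argument for argmin maps of strictly convex functions. Fix $b_0 \in \mathrm{pos}(A)$ and a sequence $b_j \to b_0$. Using the coordinate change \eqref{eq:bytox}, it suffices to show $y^*(b_j) \to y^*(b_0)$, where $y^*(b)$ minimizes $y \mapsto f(b,y) = \mathrm{vol}_{n-d}(Q_b - y)^\circ$ over $\mathrm{int}(Q_b)$. First I would argue that the minimizers $y^*(b_j)$ stay in a compact subset of $\ker A$: since the $b_j$ eventually lie in a bounded neighborhood of $b_0$, the polytopes $Q_{b_j}$ are uniformly bounded, and a minimizer of the dual volume cannot approach the boundary of $Q_{b_j}$ because the dual volume blows up there (the denominator $\ell_1 \cdots \ell_{n_C}$ in Proposition \ref{prop:rational} tends to $0$ while the numerator stays bounded away from $0$ near a facet). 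Then take any convergent subsequence $y^*(b_{j_k}) \to \bar y$. Using the explicit piecewise-rational formula for $f(b,y)$ from Proposition \ref{prop:rational} — whose numerator and denominator are polynomials in $(b,y)$ — the function $f$ is jointly continuous on its domain, and one checks $\bar y \in \mathrm{int}(Q_{b_0})$ by the blow-up argument again. A lower-semicontinuity/epiconvergence argument then yields $f(b_0, \bar y) \leq f(b_0, y)$ for all $y \in \mathrm{int}(Q_{b_0})$: for any such $y$, pick $y_j \in \mathrm{int}(Q_{b_j})$ with $y_j \to y$ (possible since the $Q_{b_j}$ converge to $Q_{b_0}$ in the appropriate sense on $\mathrm{int}(C)$, by Proposition \ref{prop:simple}, and by continuity at the boundary), and pass to the limit in $f(b_{j_k}, y^*(b_{j_k})) \leq f(b_{j_k}, y_j)$. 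By uniqueness of the Santal\'o point (strict convexity), $\bar y = y^*(b_0)$. Since every subsequence of $y^*(b_j)$ has a further subsequence converging to $y^*(b_0)$, the whole sequence converges, proving continuity of $y^* $, hence of $\phi$.

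One subtlety I would be careful about: the combinatorial type of $Q_b$ is constant only on the interior of a single cell $C \in \mathcal{C}_A$, so when $b_0$ lies on a wall of the chamber complex the polytopes $Q_{b_j}$ may have different facet structures for different $j$. I would address this by working cell-by-cell: it is enough to prove continuity of $\phi$ restricted to $C \cap \mathrm{pos}(A)$ for each cell $C$ (these are finitely many closed sets covering $\mathrm{pos}(A)$, and a function continuous on each member of a finite closed cover is continuous), and on $C \cap \mathrm{pos}(A)$ the single rational formula $f_C(b,y)$ governs the volume, with facets degenerating only in a controlled way as $b$ approaches $\partial C$. With this reduction the blow-up and limiting arguments above go through verbatim. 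The remaining checks — that $\mathrm{SP}(A) \subset \mathbb{R}^n_{>0}$ so the target space is as claimed, and that the topology is the subspace topology from $\mathbb{R}^n$ — are immediate.
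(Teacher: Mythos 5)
Your proof is correct in substance, but it takes a genuinely different route from the paper's. The paper also first reduces to showing continuity of $b \mapsto y^*(b)$ (after factoring $\phi$ through the linear change of coordinates \eqref{eq:coordchange}), but then it invokes strict convexity to see that $y^*(b_0)$ is a non-degenerate zero of the critical equations \eqref{eq:fc}, and applies the Implicit Function Theorem to produce a locally defined \emph{algebraic} function $y^*_C(b)$ agreeing with $y^*$ on $\Omega(b_0,C) \cap C$; gluing these over the finite closed cover of $\mathrm{pos}(A)$ by the sets $C \cap \mathrm{pos}(A)$ gives continuity. The fact that the inverse is continuous is then immediate because $\psi(b) = (b, y^*(b))$ is a graph (same observation as your remark that $A$ is a left inverse of $\phi$). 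Your argument instead proves continuity by a soft compactness and epiconvergence argument: bound the minimizers away from $\partial Q_{b_j}$ via blow-up of the dual volume near facets, extract a convergent subsequence, pass to the limit in the defining inequality, and invoke uniqueness of the minimizer. Both are valid. What the paper's IFT route buys is the stronger local structure (piecewise-algebraicity of $y^*$, hence of the Santal\'o patchwork), which is reused in Corollary~\ref{cor:homeo} and throughout Sections~\ref{sec:4} and~\ref{sec:5}; your route is more elementary and would apply to non-rational strictly convex objectives, but it does not by itself establish the semi-algebraic structure the rest of the paper needs.

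One place to tighten your write-up: near a wall $b_0 \in \partial C$, some facets of $Q_{b_j}$ degenerate as $b_j \to b_0$, and the numerator $\alpha(b,y)$ of the cell-wise rational expression can vanish along those degenerating hyperplanes at $b = b_0$. So the claim ``the numerator stays bounded away from $0$ near a facet'' needs a small caveat; the blow-up of $f_C$ at $\partial Q_{b_0}$ is still correct, but is best seen by factoring out the degenerating linear forms and comparing with the honest dual-volume formula for $Q_{b_0}$ (which has the adjoint of $Q_{b_0}$, strictly positive on the closed polytope, in the numerator and only the actual facet forms in the denominator). With that repaired, the compactness step is sound.
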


\begin{proof}
It is convenient to work in $(b,y)$ coordinates first. Let $\Sigma(B)$ be the open cone
\[ \Sigma(B) \, = \, \left \{(b,y) \in \mathbb{R}^n \,:\,  \begin{pmatrix}
    A \\
    B^T
\end{pmatrix}^{-1}
\begin{pmatrix}
    b\\
    y
\end{pmatrix} > 0 \right \}. \]
It is clear that $\Sigma(B) \simeq \mathbb{R}^n_{>0}$ via the linear coordinate change $\left ( \begin{smallmatrix}
    A\\B^T
\end{smallmatrix}\right )$. The map $\phi$ factors as $\phi = \left ( \begin{smallmatrix}
    A\\B^T
\end{smallmatrix}\right )^{-1} \circ \psi$, where $
\psi(b) = (b, y^*(b)) \in \Sigma(b)$. It suffices to show that $\psi$ is a homeomorphism onto its image. First, we note that the restriction of $\psi$ to the interior of any cell $C\in \mathcal{C}_A$ is given by algebraic functions and is therefore continuous. Indeed, for a fixed $b\in \mathrm{int}(C)$, $y^*(b)$ minimizes the rational function $f_C(b,y)=\mathrm{vol}_{n-d}(Q_b-y)^\circ$. Let $b_0$ be a point in the Euclidean boundary $\partial C \cap {\rm pos}(A)$. By continuity of the dual volume, $f_C(b_0,y)$ is the dual volume of $Q_{b_0}-y$ for any $y\in \mathrm{int}(Q_{b_0})$. The Santal\'o point $y^*(b_0)$ is the unique minimizer of this function on $\mathrm{int}(Q_{b_0})$. Since the dual volume is strictly convex on $\mathrm{int}(Q_{b_0})$ \cite[Proof of Proposition 1(i)]{meyer1998santalo}, $y^*(b_0)$ is a non-degenerate solution to the system of algebraic equations
\begin{equation}\label{eq:fc}
    \frac{\partial_{y_i}f_C(b_0,y)}{f_C(b_0,y)} = 0, \quad \text{for }\,i=1,
    \ldots,n-d.
\end{equation}
By the Implicit Function Theorem, there exist a neighborhood $\Omega(b_0, C) \subset {\rm pos}(A)$ of $b_0$ and a unique algebraic function $y^*_C(b)$ such that $y_C^*(b_0) = y^*(b_0)$ and
\begin{equation}\label{eq:ystarc}
     \frac{\partial_{y_i}f_C(b,y^*_C(b))}{f_C(b,y^*_C(b))} = 0, \quad \text{for }\,i=1,
    \ldots,n-d \, \text{ and } b\in \Omega(b_0,C).
\end{equation} 
Moreover, being a solution of \eqref{eq:ystarc}, $y_C^*(b)$ minimizes the dual volume $\mathrm{vol}_{n-d}(Q_b-y)$ for $b\in \Omega(b_0,C)\cap C$, that is, $y^*_C(b) = y^*(b)$ for $b\in \Omega(b_0,C)\cap C$. 
Note that by construction, for two cells $C, C'\in \mathcal{C}_A$ and for $b_0 \in C \cap C' \cap {\rm pos}(A)$, we have $y^*_{C}(b_0)=y^*_{C'}(b_0) = y^*(b_0)$. 
Since $\mathrm{pos}(A)$ is covered by $C \cap \mathrm{pos}(A)$ for cells $C \in \mathcal{C}_A$, we get that $y^*(b)$ is continuous on $\mathrm{pos}(A)$. We conclude that $\psi$ is injective and continuous, so it is a homeomorphism between $\mathrm{pos}(A)$ and its image, the graph of $y^*(b)$. See Figure \ref{fig:SPpentagon} for an illustration of such a graph.
\end{proof}

We now find a description of ${\rm SP}(A)$ as a finite union of basic semi-algebraic sets, i.e., sets defined by algebraic equations and inequalities. This will imply that ${\rm SP}(A)$ is a semi-algebraic set. For $b \in {\rm int}(C)$, the Santal\'o point $x^*(b)$ is the unique positive point among the critical points of the following (equality) constrained optimization problem: 
\begin{equation} \label{eq:eqconstrained}
{\rm minimize} \,\, \log \, V_{C}(x), \quad \text{subject to} \, \, Ax = b. 
\end{equation}
Here $V_C(x)$ is the rational function in Corollary \ref{cor:ratx}. We simplify the notation by setting 
\begin{equation} \label{eq:alphaCVC}
\gamma = 1, \quad \alpha_C(x) \, = \, \sum_{I(v) \in {\cal V}_C} | \det A_{[n] \setminus I(v)}|  \cdot x_{{\cal F}_C \setminus I(v)}, \quad \text{and} \quad V_C(x) \, = \, \frac{\alpha_C(x)}{x_{{\cal F}_C}}.
\end{equation}
Recall that $x_{{\cal F}_C} = \prod_{i \in {\cal F}_C} x_i$ is the product of all variables $x_i$ which contribute a facet in the cell $C$. Note that $x_i$ contributes a facet if and only if every $b\in \mathrm{int}(C)$ is in the interior of the convex hull of all but the $i$-th column of $A$. Furthermore, $\alpha_C(x)$ depends only on $x_i, i \in {\cal F}_C$. The partial derivatives of $\log V_C$ with respect to the variables $x$ are given by
\[ \partial_{x_i}(\log V_C) \, = \, \begin{cases}
    \frac{\partial_{x_i} \alpha_C}{\alpha_C} - \frac{1}{x_i} & i \in {\cal F}_C,\\
    0 & i \in [n] \setminus {\cal F}_C.
\end{cases}\]
Here we write $\partial_{x_i}$ for $\frac{\partial}{\partial x_i}$. Applying the method of Lagrange multipliers to \eqref{eq:eqconstrained} we obtain the following set of rational function equations in the variables $x, \lambda = (\lambda_1, \ldots, \lambda_d)$:
\[ (\partial_{x_1}(\log V_C), \ldots, \partial_{x_n}(\log V_C))^T \, = \, A^T \cdot \lambda \quad \text{and} \quad Ax \, = \, b.\]
To eliminate the multipliers $\lambda$, we apply $B^T$ to the left- and righthand side of the first set of equations. Writing $B_C$ for the submatrix of $B$ whose rows are indexed by ${\cal F}_C$, we obtain 
\[ B_C^T \cdot \left ( \frac{\partial_{x_i} \alpha_C}{\alpha_C} - \frac{1}{x_i} \right )_{i \in {\cal F}_C} \, = \, 0 \quad \text{and} \quad Ax \, = \, b.\]
These equations make sense for minimizing the dual volume of $P_b$ only when $Ax = b \in C \cap {\rm pos}(A)$, and the minimizer is the unique solution in that cone. We define the \emph{Santal\'o patch} of the cell $C \in {\cal C}_A$ to be the following basic semi-algebraic set: 
\begin{equation} \label{eq:SC}
S_C \, = \, \left \{ x \in \mathbb{R}^n_{>0} \, : \, Ax \in C \cap {\rm pos}(A) \, \, \text{ and } \, \, B_C^T \cdot \left ( \frac{\partial_{x_i} \alpha_C}{\alpha_C} - \frac{1}{x_i} \right )_{i \in {\cal F}_C} \, = \, 0 \right \}.\end{equation}
Notice that the rational equations in this definition make sense, since $\alpha_C$ and the coordinate functions $x_i$ are positive on $\mathbb{R}^n_{>0}$. We now state a consequence of the proof of Proposition~\ref{prop:homeo}.
\begin{corollary} \label{cor:homeo}
    For a cell $C\in \mathcal{C}_A$, $\phi|_{C \cap {\rm pos}(A)}:C \cap {\rm pos}(A) \to S_C$ is a homeomorphism.
    In particular, the Santal\'o patchwork $\mathrm{SP}(A)$ is the union of the Santal\'o patches:
    $$\mathrm{SP}(A) \,= \bigcup_{C \in \mathcal{C}_A} S_C,$$
    where the union is taken over the cells of $\mathcal{C}_A$. 
\end{corollary}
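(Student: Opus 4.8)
The plan is to bootstrap from Proposition~\ref{prop:homeo}. Since that result already gives that $\phi$ is a homeomorphism from $\mathrm{pos}(A)$ onto $\mathrm{SP}(A)$, its restriction to the closed subcone $C\cap\mathrm{pos}(A)$ is automatically a homeomorphism onto its image $\phi(C\cap\mathrm{pos}(A))$. So the whole task reduces to proving the set-theoretic identity $\phi(C\cap\mathrm{pos}(A))=S_C$. Once this is in hand, the ``in particular'' clause is immediate: applying $\phi$ to the covering $\mathrm{pos}(A)=\bigcup_{C\in\mathcal{C}_A}(C\cap\mathrm{pos}(A))$ yields $\mathrm{SP}(A)=\bigcup_C S_C$, and the covering itself holds because $\mathcal{C}_A$ is a polyhedral subdivision of the full-dimensional cone $\mathrm{cone}(A)=\overline{\mathrm{pos}(A)}$, so every point of $\mathrm{cone}(A)$ lies in some (closed) cell.

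For the inclusion $\phi(C\cap\mathrm{pos}(A))\subseteq S_C$, I would first treat $b\in\mathrm{int}(C)$, where the discussion preceding the statement does the work: $x^*(b)=\phi(b)$ minimizes $\log V_C$ subject to $Ax=b$, and combining the Lagrange-multiplier conditions with left-multiplication by $B^T$ (using $AB=0$) turns stationarity into exactly the equations cutting out $S_C$; together with $Ax^*(b)=b\in C\cap\mathrm{pos}(A)$ this gives $x^*(b)\in S_C$. For $b\in\partial C\cap\mathrm{pos}(A)$ I would pass to the limit: the relations in \eqref{eq:SC} are a closed condition on $\mathbb{R}^n_{>0}$ (neither $\alpha_C$ nor any $x_i$ vanishes there), the section $x^*(\cdot)$ is continuous on $\mathrm{pos}(A)$ by the proof of Proposition~\ref{prop:homeo}, and these relations hold on the dense subset $\phi(\mathrm{int}(C))\subset\phi(C\cap\mathrm{pos}(A))$, hence on all of $\phi(C\cap\mathrm{pos}(A))$.

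For the reverse inclusion $S_C\subseteq\phi(C\cap\mathrm{pos}(A))$, I would take $x\in S_C$ and set $b=Ax$, so that $b\in C\cap\mathrm{pos}(A)$ and $x\in\mathrm{relint}(P_b)=\{x'>0:Ax'=b\}$. Reversing the elimination step, the defining equations of $S_C$ together with $\partial_{x_i}\log V_C\equiv 0$ for $i\notin{\cal F}_C$ say that $B^T\nabla\log V_C(x)=0$, i.e.\ $\nabla\log V_C(x)$ lies in $(\ker A)^\perp$, the row span of $A$ (the columns of $B$ span $\ker A$). Hence $x$ is a critical point of $\log V_C$ constrained to $\{x':Ax'=b\}$. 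Now I would invoke strict convexity: the affine isomorphism $x'\mapsto B^Tx'$ identifies $\{x':Ax'=b\}$ with $\ker A\simeq\mathbb{R}^{n-d}$, carries $\mathrm{relint}(P_b)$ onto $\mathrm{int}(Q_b)$, and pulls $\log\mathrm{vol}_{n-d}(Q_b-y)^\circ$ back to $\log V_C$ by Corollary~\ref{cor:ratx} and Proposition~\ref{prop:rational}; since $\log\mathrm{vol}_{n-d}(Q_b-y)^\circ$ is strictly convex on $\mathrm{int}(Q_b)$ \cite[Proof of Proposition 1(i)]{meyer1998santalo}, it has at most one critical point there, necessarily its global minimizer $y^*(b)$. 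Therefore $B^Tx=y^*(b)$, and since also $Ax=b$ we conclude $x=x^*(b)=\phi(b)$ via \eqref{eq:bytox}.

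The step I expect to be the main obstacle is making the boundary case $b\in\partial C$ fully rigorous: there $P_b$ degenerates to a polytope with fewer than $n_C$ facets, so one must check that the combinatorially defined rational function $V_C$ still computes the true dual volume of $Q_b-y$ and that it remains strictly convex on the fiber. Both follow from the continuity clause in Proposition~\ref{prop:rational} and from strict convexity of dual volumes holding for an arbitrary polytope, but this is where the argument is least mechanical; everything else is formal, resting on Propositions~\ref{prop:homeo} and~\ref{prop:rational} and the Lagrange-multiplier computation already carried out before the statement.
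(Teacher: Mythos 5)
Your proof is correct and matches the paper's implicit approach: the paper presents the corollary as ``a consequence of the proof of Proposition~\ref{prop:homeo}'' together with the Lagrange-multiplier discussion immediately preceding the statement, which is exactly the structure you use (restrict the homeomorphism $\phi$, then prove $\phi(C\cap\mathrm{pos}(A))=S_C$ by relating the defining equations of $S_C$ to constrained criticality of $\log V_C$ and invoking strict convexity for uniqueness). Your care with the reverse inclusion (reversing the elimination by $B^T$ via $(\ker A)^\perp = \operatorname{rowspan}(A)$) and with the boundary case $b\in\partial C\cap\operatorname{pos}(A)$ (handled through the continuity clause in Proposition~\ref{prop:rational} and the closedness of the $S_C$ equations on $\mathbb{R}^n_{>0}$) fills in precisely the details the paper leaves implicit.
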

\begin{example}[$d = 2, n = 3$] \label{ex:linesegm}
    Consider the matrix $A = \left (\begin{smallmatrix}
        2 & 1 & 0 \\
        0 & 1 & 2
    \end{smallmatrix} \right )$. The open cone ${\rm pos}(A)$ is $\mathbb{R}^2_{>0}$ and the polytope $P_b$, for $b \in {\rm pos}(A)$, is a line segment. The complex ${\cal C}_A$ has two cells: 
    \[ C_1 \, = \, \{ (b_1,b_2) \in \mathbb{R}^2_{\geq 0} \, : \, b_1 \leq b_2 \}, \quad C_2 \, = \, \{(b_1,b_2) \in \mathbb{R}^2_{\geq 0} \, : \, b_1 \geq b_2 \}.\]
    For $C_1$, we have ${\cal F}_{C_1} = \{1,2\}$ and ${\cal V}_{C_1} = \{ \{1\}, \{2\}\}$. The dual volume function is 
    \[ V_{C_1}(x_1,x_2,x_3) \, = \, \frac{\left| \begin{smallmatrix}
        1 & 0 \\ 1 & 2
    \end{smallmatrix} \right | \cdot x_2 + \left| \begin{smallmatrix}
        2 & 0 \\ 0 & 2
    \end{smallmatrix} \right | \cdot x_1 }{x_1x_2} \, = \, \frac{2x_2 + 4x_1}{x_1x_2}. \]
    Notice that $V_{C_1}$ does not depend on $x_3$, because $x_3 = 0$ does not contribute a facet to the line segment $P_b$, $b \in {\rm int}(C_1)$. Setting $B = \begin{pmatrix}1 & -2 & 1 \end{pmatrix}^T$ gives $B_{C_1}^T = \begin{pmatrix} 1 & -2 \end{pmatrix}$. We find the following inequality description of the Santal\'o patch $S_{C_1}$:
    \begin{align*} S_{C_1} \, &= \, \left \{ x \in \mathbb{R}^3_{>0} \, : \, 2x_1 + x_2 \leq x_2 + 2x_3, \, \left( \frac{4}{2x_2 +4x_1} -\frac{1}{x_1} \right) -2 \, \left( \frac{2}{2x_2 +4x_1} -\frac{1}{x_2} \right) = 0 \right \} \\
    &= \, \left \{ x \in \mathbb{R}^3_{>0} \, : \, x_1 \leq x_3, \, 2x_1 - x_2 = 0 \right \}.
    \end{align*}
    With an analogous computation we find the following data for the cell $C_2$:
    \[ V_{C_2}(x_1,x_2,x_3) \, = \, \frac{4x_3 + 2x_2}{x_2x_3}, \quad 
    S_{C_2} \, =  \left \{ x \in \mathbb{R}^3_{>0} \, : \, x_1 \geq x_3, \, 2x_3 - x_2 = 0 \right \}.
    \]
    We conclude that the Santal\'o patchwork ${\rm SP}(A)$ is the union of two $2$-dimensional cones in $\mathbb{R}^3$. The projection $A: {\rm SP}(A) \rightarrow {\rm pos}(A)$ is a homeomorphism, see Figure \ref{fig:santalolinear}.
    \begin{figure}
        \centering
        \includegraphics[height = 5cm]{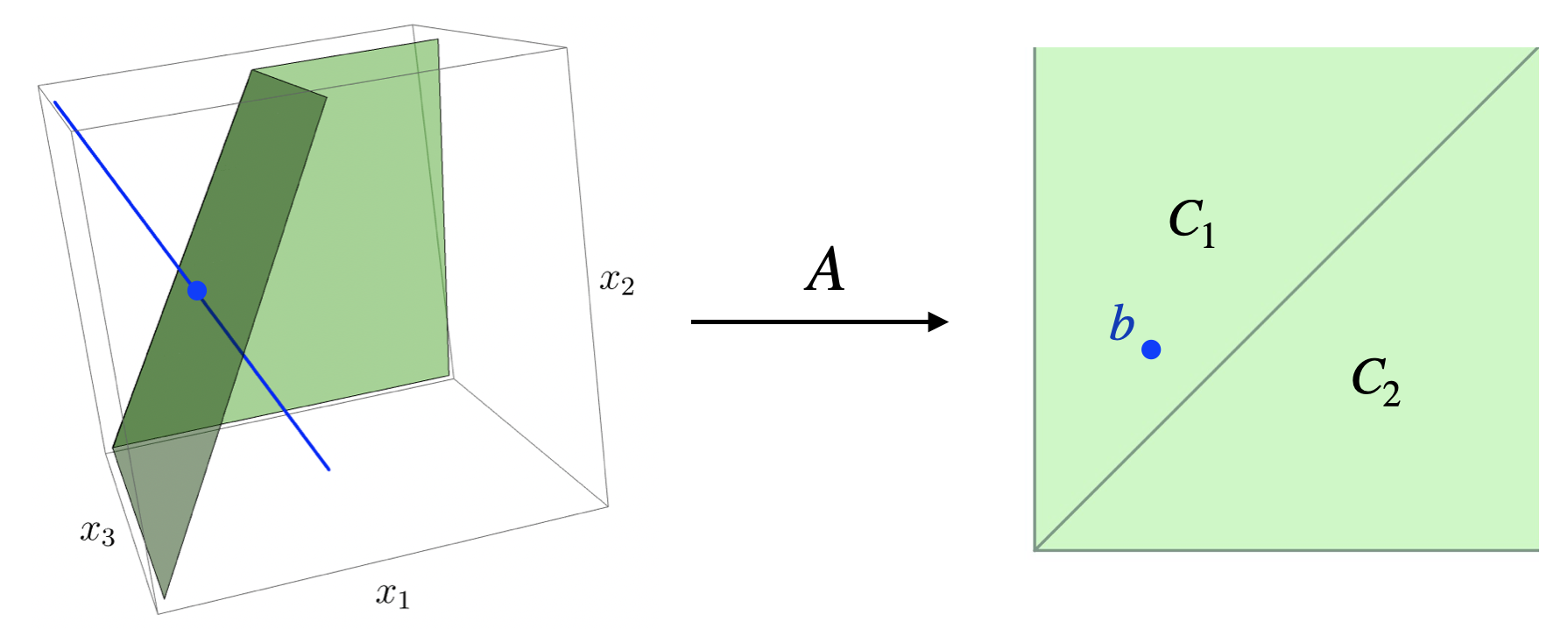}
        \caption{Illustration for Example \ref{ex:linesegm}. Left: the Santal\'o patchwork (green), the polytope $P_b$ (blue) and its Santal\'o point $x^*(b)$ (blue). Right: the chamber complex ${\cal C}_A$.}
        \label{fig:santalolinear}
    \end{figure}
    \end{example}

\begin{example}[$d = 2, n = 4$] \label{ex:quadrilateral}
    The chamber complex ${\cal C}_A$ for $A = \left (\begin{smallmatrix}
        1 & 1 & 1 & 1 \\
        0 & 1 & 2 & 3
    \end{smallmatrix} \right )$ has three cells: 
    \[C_1 \, = \, \{b_2 \geq 0,\,  b_1 \geq b_2 \}, \quad C_2 \, = \, \{ b_1 \leq b_2, \, 2b_1 \geq b_2 \}, \quad C_3 \, = \, \{ 2b_1 \leq b_2, \, 3b_1 \geq b_2\}. \]
    For $b \in {\rm int}(C_1)$ and $b \in {\rm int}(C_3)$, $P_b$ is a triangle, and for $b \in {\rm int}(C_2)$, it is a quadrilateral:
    \[\begin{matrix}
        {\cal F}_{C_1} \, =\,  \{2,3,4\},  \quad & {\cal V}_{C_1} \, = \, \{\{2,3\},\{2,4\},\{3,4\}\}, \\
        {\cal F}_{C_2} \, = \, \{1,2,3,4\}, \quad & {\cal V}_{C_2} \, = \, \{\{1,3\},\{1,4\},\{2,3\},\{2,4\}\}, \\ 
        {\cal F}_{C_3} \, = \, \{1,2,3\}, \quad & {\cal V}_{C_3} \, = \, \{\{1,2\},\{1,3\},\{2,3\}\}. \\
    \end{matrix}\]
    With these data, it is straightforward to write down the dual volume functions:
    \[V_{C_1} \, = \, \frac{3x_4 + 2x_3 + x_2}{x_2x_3x_4}, \quad V_{C_2} \, = \, \frac{2x_2x_4 + x_2x_3 + 3x_1x_4 + 2x_1x_3}{x_1x_2x_3x_4}, \quad V_{C_3} \, = \, \frac{x_3 + 2x_2 + 3x_1}{x_1x_2x_3}.\]
    The Santal\'o patches are 2-dimensional semi-algebraic subsets of $\mathbb{R}^4$. They are given by 
    \begin{align*}
        S_{C_1} \, &= \, \{ x >0, \, Ax \in C_1, \, 2x_3 -x_2 \, = \, 3x_4 -2x_3 \, = \, 0 \}, \\
        S_{C_2} \, &= \, \{ x >0, \,  Ax \in C_2, \, x_1x_2 - 6x_1x_4 + 2x_2x_3 + x_3x_4 =  x_1x_2-4x_1x_3+4x_2x_4-x_3x_4  = 0 \}, \\
        S_{C_3} \, &= \, \{ x > 0,\,  Ax \in C_3, \, -x_3 + 2x_2  \, = \, -2x_2 +3x_1 \,  = \, 0 \}. 
    \end{align*}
    To visualize the Santal\'o patchwork, we restrict $A: \mathbb{R}^4_{>0} \rightarrow {\rm pos}(A)$ to the probability simplex $\Delta_3 = \{ x>0, x_1 + x_2 + x_3 + x_4 = 1 \}$. The image of this restriction is the interior of the line segment obtained by taking the convex hull of the columns of $A$. The intersection ${\rm SP}(A) \cap \Delta_3$ is a piece-wise algebraic curve, homeomorphic to this line segment, see Figure \ref{fig:santaloquadrilateral}. Note the similarity between Figure \ref{fig:santaloquadrilateral} and \cite[Figure 2]{sturmfels2024toric}, where dual volume is replaced by entropy.
\end{example}
    \begin{figure}
        \centering
        \includegraphics[height = 5cm]{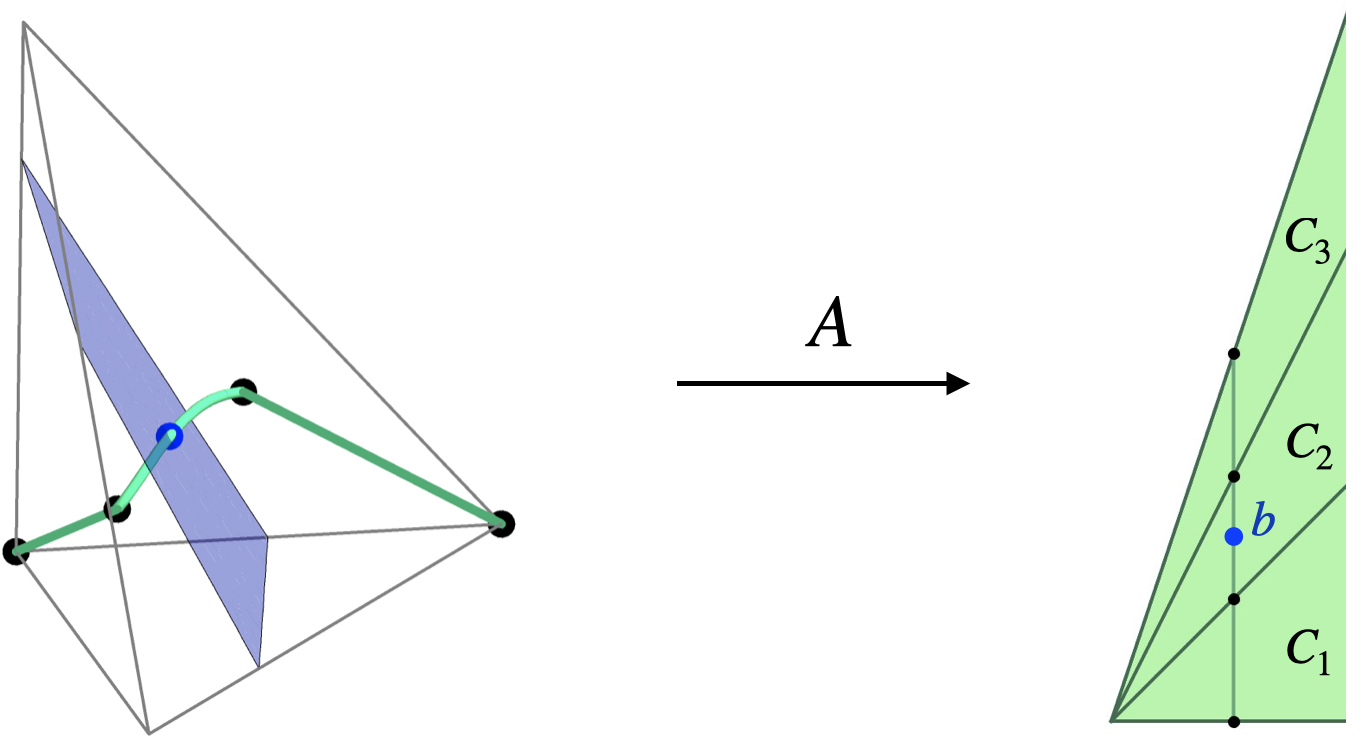}
        \caption{Illustration for Example \ref{ex:quadrilateral}. Left: the Santal\'o patchwork (green), the polytope $P_b$ (blue) and its Santal\'o point $x^*(b)$ (blue). Right: the chamber complex ${\cal C}_A$.}
        \label{fig:santaloquadrilateral}
    \end{figure}

\begin{example} \label{ex:runningsec3}
    The Santal\'o patchwork for the matrix $A$ in our running example (Example \ref{ex:AB}) consists of 11 patches, one for each cell in the chamber complex shown in Example \ref{ex:pentagon1}. These 11 patches are separated by the black curves on the surfaces in Figure \ref{fig:SPpentagon}.
\end{example}
The following statement is a tautology. It emphasizes the role of ${\rm SP}(A)$ in solving \eqref{eq:argmin}.

\begin{proposition} \label{prop:SPcapPb}
    The Santal\'o point of $P_b$ is given by $x^*(b) = \phi(b) = {\rm SP}(A) \cap P_b$.
\end{proposition}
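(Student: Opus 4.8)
The plan is to observe that, once the objects are in place, the proposition is a matter of unwinding definitions, and the only thing worth isolating is the fact that $\phi$ is a \emph{section} of the projection $A$. The first equality $x^*(b) = \phi(b)$ is literally the definition of the map $\phi$ in Definition~\ref{def:SPA}: $\phi$ sends $b$ to the Santal\'o point $x^*(b)$ of $P_b$. So all the content is in the identification $\phi(b) = {\rm SP}(A) \cap P_b$, which I would prove by a double inclusion.

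For the inclusion $\phi(b) \in {\rm SP}(A) \cap P_b$, membership in ${\rm SP}(A)$ is immediate because, by Definition~\ref{def:SPA}, ${\rm SP}(A)$ \emph{is} the image of $\phi$. Membership in $P_b$ follows from the construction of the Santal\'o point in \eqref{eq:argmin}: $x^*(b)$ is defined as the $\operatorname{argmin}$ of $V$ over ${\rm int}(P_b)$, so in particular $x^*(b) \in {\rm int}(P_b) \subset P_b$. Conversely, suppose $x \in {\rm SP}(A) \cap P_b$. Since $x \in {\rm SP}(A)$, there is some $b' \in {\rm pos}(A)$ with $x = \phi(b') = x^*(b')$. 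By the previous remark $x^*(b') \in P_{b'}$, hence $Ax = b'$; but $x \in P_b$ means $Ax = b$, so $b' = b$ and $x = \phi(b)$. This shows ${\rm SP}(A) \cap P_b$ is the single point $\phi(b)$, completing the proof.

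I do not expect a genuine obstacle: the argument is the ``tautology'' advertised in the text. The one point that must be made explicit is that $\phi$ composed with $A$ is the identity on ${\rm pos}(A)$, i.e.\ $A\,\phi(b) = b$, which is what guarantees that distinct fibers meet ${\rm SP}(A)$ in distinct points and hence that the intersection with $P_b$ is a \emph{single} point rather than a larger set; this is also consistent with the injectivity of $\phi$ established in Proposition~\ref{prop:homeo}, though only this weaker observation is needed here.
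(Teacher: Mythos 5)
Your proof is correct. The paper gives no argument at all — it simply declares the statement a tautology — and your unwinding of the definitions, together with the observation that $A\,\phi(b)=b$ forces the intersection $\mathrm{SP}(A)\cap P_b$ to be a single point, is precisely the reasoning the paper is tacitly invoking.
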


\begin{example}
    For $A$ as in Example \ref{ex:linesegm}, the polytope $P_b$ for $b = (1,2)$ is the blue line segment in Figure \ref{fig:santalolinear} (left). Its blue intersection point with ${\rm SP}(A)$ is the center of that line segment, which is its Santal\'o point $x^*(b)$. 
    For $A$ as in Example \ref{ex:quadrilateral}, the polytope $P_b$ for $b = (1,3/2)$ is the blue quadrilateral in Figure \ref{fig:santaloquadrilateral}. Again, ${\rm SP}(A) \cap P_b$ is the Santal\'o point. 
\end{example}

\section{Patch varieties} \label{sec:4}
Section \ref{sec:3} describes the set of all solutions to the optimization problem \eqref{eq:argmin} as a semi-algebraic set called the Santal\'o patchwork. For algebraic computations, it is often convenient to work with algebraic sets instead. This section studies algebraic varieties containing the Santal\'o patches $S_C$ defined in \eqref{eq:SC}. A natural thing to do is take the Zariski closure. 
We define
\[ X_C \, = \, \overline{S_C} \, \subset \, \mathbb{C}^{n}.  \]
We call $X_C$ the \emph{patch variety} of the cell $C$. A simple way to find equations vanishing on $X_C$ is by dropping the inequalities in \eqref{eq:SC}. Let ${\cal X}_C \subset \mathbb{C}^n$ be the Zariski closure of the set 
\[ {\cal X}_C^\circ \, = \, \left \{ x \in \mathbb{C}^n \, : \, \alpha_C(x) \prod_{i\in {\cal F}_C} x_i \neq 0 \quad \text{and} \quad B_C^T \cdot \left ( \frac{\partial_{x_i} \alpha_C}{\alpha_C} - \frac{1}{x_i} \right )_{i \in {\cal F}_C} \, = \, 0 \right \} .  \]
\begin{theorem} \label{thm:mainsec3}
    The patch variety $X_C$ is a $d$-dimensional irreducible component of ${\cal X}_C$.
\end{theorem}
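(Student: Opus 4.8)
The plan is to establish three facts: (i) $X_C \subseteq \mathcal{X}_C$; (ii) $X_C$ is irreducible of dimension $d$; and (iii) $X_C$ passes through a point $x_0$ that is a smooth point of $\mathcal{X}_C$ of local dimension $d$. Granting these, the unique irreducible component $Z$ of $\mathcal{X}_C$ through $x_0$ has $\dim Z = d$; since $X_C$ is irreducible, contained in $\mathcal{X}_C$, and contains $x_0$, we get $X_C \subseteq Z$, and then $\dim X_C = d = \dim Z$ forces $X_C = Z$, so $X_C$ is a $d$-dimensional irreducible component of $\mathcal{X}_C$.

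Fact (i) and the dimension count are quick. On $\mathbb{R}^n_{>0}$ the polynomial $\alpha_C$ and the coordinates $x_i$ are strictly positive, so $S_C \subseteq \mathcal{X}_C^\circ$ and hence $X_C = \overline{S_C} \subseteq \mathcal{X}_C$. Corollary~\ref{cor:homeo} provides a semialgebraic homeomorphism $\phi\colon C \cap \mathrm{pos}(A) \to S_C$, and $C \cap \mathrm{pos}(A)$ has real dimension $d$ since $C$ is a cell; as the complex dimension of the Zariski closure of a semialgebraic set equals its real dimension, $\dim X_C = \dim_{\mathbb{R}} S_C = d$.

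For irreducibility, put $S_C^\circ = \phi(\mathrm{int}(C))$; this is Euclidean-dense in $S_C$ (because $\mathrm{int}(C)$ is dense in $C \cap \mathrm{pos}(A)$ and $\phi$ is a homeomorphism), hence Zariski-dense in $X_C$. The restriction $\phi|_{\mathrm{int}(C)}$ is injective by Proposition~\ref{prop:homeo} and real-analytic: applying the analytic implicit function theorem to $\nabla_y \log f_C(b,y) = 0$ near $(b,y^*(b))$ exhibits $y^*(b)$ as a real-analytic function of $b$, the hypothesis being that the Hessian of $\log f_C(b,\cdot)$ is invertible at $y^*(b)$, which holds since $y^*(b)$ is a non-degenerate minimizer of the dual volume (proof of Proposition~\ref{prop:homeo}). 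Therefore, if a polynomial $f$ vanishes on a nonempty relatively open subset of $S_C^\circ$, then $f \circ \phi$ vanishes on a nonempty open subset of the connected set $\mathrm{int}(C)$, hence identically, so $f$ vanishes on all of $S_C^\circ$. This shows the ideal $I(S_C^\circ)$ is prime: if $fg \in I(S_C^\circ)$ then $(f\circ\phi)(g\circ\phi)\equiv 0$ on $\mathrm{int}(C)$, and since the zero set of a nonzero analytic function on a connected open set is nowhere dense, one of $f\circ\phi$, $g\circ\phi$ vanishes identically. Hence $X_C$ is irreducible.

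The core of the proof is fact (iii), and this is where I expect the main difficulty. The key point is that the linear coordinate change $\left(\begin{smallmatrix} A \\ B^T \end{smallmatrix}\right)$ turns the Lagrange-type equations of $\mathcal{X}_C^\circ$, namely $B^T \nabla_x \log V_C = 0$, into $\nabla_y \log f_C(b,y) = 0$: up to a positive constant $V_C(x) = f_C(Ax, B^T x)$, so the chain rule gives $B^T \nabla_x \log V_C = (AB)^T \nabla_b \log f_C + (B^T B)\,\nabla_y \log f_C = (B^T B)\,\nabla_y \log f_C$ because $AB = 0$, and $B^T B$ is invertible since $B$ has rank $n-d$. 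Fix $b_0 \in \mathrm{int}(C)$ and let $x_0 = \phi(b_0)$, corresponding to $(b_0,y^*(b_0))$; this point lies in the open locus where the denominators of the equations do not vanish, so the differentials of these rational equations agree up to nonzero scalars with those of their polynomial numerators. The Jacobian of the $n-d$ functions $\nabla_y \log f_C$ with respect to $(b,y)$ at $(b_0,y^*(b_0))$ has, among its columns, the invertible matrix $\mathrm{Hess}_y \log f_C(b_0,y^*(b_0))$, hence it has full rank $n-d$. So $x_0$ is a smooth point of $\mathcal{X}_C$ of local dimension $n-(n-d)=d$, which is (iii), and the argument concludes as in the first paragraph. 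The main obstacle is precisely this step: spotting that the defining equations collapse to $\nabla_y \log f_C = 0$ under the coordinate change (which uses $AB=0$) and then cashing in the non-degeneracy of the Santal\'o minimizer to read off the local dimension of $\mathcal{X}_C$ at $x_0$; a subordinate technical point is the careful justification of the real-analyticity of $\phi$ used in the irreducibility step, together with identifying the paper's ``non-degenerate solution'' with invertibility of $\mathrm{Hess}_y \log f_C$.
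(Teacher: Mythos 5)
Your proof is correct, and it takes a genuinely more explicit route than the paper's. Both arguments share the same skeleton: strict convexity of the dual volume makes the Santal\'o point a non-degenerate solution, and Corollary~\ref{cor:homeo} pins the dimension of $S_C$ at $d$. But where the paper quickly invokes \cite[Theorem A.14.1]{sommese2005numerical} to conclude that each $x^*(b_0)$ lies on a $d$-dimensional irreducible component of $\mathcal{X}_C^\circ$, and then asserts that all of $S_C$ is contained in a \emph{single} such component $\mathcal{Z}$, you instead cash in the non-degeneracy concretely: after the coordinate change (using $AB = 0$ and invertibility of $B^T B$) the defining equations of $\mathcal{X}_C^\circ$ become $\nabla_y \log f_C(b,y)=0$, whose Jacobian in $(b,y)$ contains the invertible block $\mathrm{Hess}_y \log f_C$, so $x_0$ is a smooth point of $\mathcal{X}_C$ of local dimension $d$. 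You also supply a self-contained irreducibility argument for $X_C$ via real-analyticity of $\phi$ on the connected set $\mathrm{int}(C)$, which the paper never proves separately (it gets $X_C = \mathcal{Z}$ and hence irreducibility as a by-product). The net effect is that you make explicit the ``all of $S_C$ sits on one component'' step that the paper treats tersely: smooth points lie on a unique component, the set of such points on a fixed component is open, and connectedness plus irreducibility of $X_C$ forces a single component. Your version trades the black-box citation for a hands-on Jacobian computation; both are valid.

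One small caution worth flagging: your irreducibility argument applies the ``zero set of a nonzero analytic function is nowhere dense'' principle to $f\circ\phi$ where $f$ may be a complex polynomial, so $f\circ\phi$ is complex-valued. The principle still holds (apply it to $|f\circ\phi|^2$, a real-analytic real-valued function), but it is worth saying so to avoid a reader's objection.
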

\begin{proof}
    We switch to $(b,y)$-coordinates using the transformation from \eqref{eq:coordchange}. We view 
    \begin{equation} \label{eq:eqnscalX} B_C^T \cdot \left ( \frac{\partial_{x_i} \alpha_C}{\alpha_C} - \frac{1}{x_i} \right )_{i \in {\cal F}_C} \, = \, 0 \end{equation}
    as equations in $y$, parametrized by $b$. For $b_0 \in C \cap {\rm pos}(A)$, by strict convexity of $V_C(x)$, the Santal\'o point $x^*(b_0) \sim (b_0,y^*(b_0))$ corresponds to an isolated solution. By \cite[Theorem A.14.1]{sommese2005numerical}, it follows that $x^*(b_0)$ lies on a $d$-dimensional irreducible component ${\cal Z}^\circ$ of ${\cal X}_C^\circ$, and hence on an irreducible component ${\cal Z} = \overline{{\cal Z}^\circ}$ of ${\cal X}_C$. This is true for every $b_0 \in C \cap {\rm pos}(A)$, so that $S_C$ is contained in ${\cal Z}$. Hence, $X_C \subset {\cal Z}$ and it has dimension at most $d$. By Corollary \ref{cor:homeo}, $S_C$ is $d$-dimensional, so $X_C$ has dimension at least $d$. We conclude that $X_C = {\cal Z} \subset {\cal X}_C$.
\end{proof}

\begin{remark} \label{rem:homogeneous} Notice that, by construction, the Santal\'o patch $S_C$ is stable under simultaneous scaling of the coordinates: $x^*(t\cdot b) = t\cdot x^*(b)$ for any $t \in \mathbb{R}_{>0}$. It follows that the ideal of $X_C$ can be generated by homogeneous equations. Furthermore, since the equations defining ${\cal X}^\circ_C$ are homogeneous (of degree $-1$), $I({\cal X}_C)$ is a homogeneous ideal as well.
\end{remark}

If $A \in \mathbb{Q}^{d \times n}_{\geq 0}$ has rational entries, the vanishing ideal $I({\cal X}_C)$ of ${\cal X}_C$ can be computed using computer algebra software such as \texttt{Macaulay2} \cite{M2} or \texttt{Oscar.jl} \cite{OSCAR} as follows. Consider the ideal of the ring $\mathbb{Q}[(x_i)_{i \in {\cal F}_C}, (\sigma_i)_{i \in {\cal F}_C}, z]$ generated by the $n - d + n_C + 1$ equations 
    \[ B_C^T \cdot \left ( z \, \partial_{x_i} \alpha_C  - \sigma_i \right )_{i \in {\cal F}_C}, \quad x_i\sigma_i-1, i \in {\cal F}_C, \quad \text{and} \quad  \, \alpha_C(x) \, z - 1.  \]
    From this ideal, eliminate the variables $\sigma_i, i \in {\cal F}_C$ and $z$. The result is $I({\cal X}_C)$.
\begin{example} \label{ex:sec3}
We perform the computation explained above for our running Example \ref{ex:AB}, for the 3-dimensional cell $C \in {\cal C}_A$ with five facets containing $b = (1,4/5,4/5)$. The adjoint~is 
\[ \alpha_C(x) \, = \, 3x_1x_2x_3 + 2x_1x_3x_5 + 2x_1x_4x_5 + 2x_2x_3x_4 + 2x_2x_4x_5, \]
i.e., the numerator of \eqref{eq:Vx}. The elimination takes place in a polynomial ring with $11$ variables. The ideal $I({\cal X}_C)$ is prime,  homogeneous, and of degree $14$. It is generated by five quintics. Here is how to compute $\alpha_C$ and $I({\cal X}_C)$ using our Julia package \texttt{Santalo.jl}, available at the online repository \url{https://mathrepo.mis.mpg.de/Santalo}:
\begin{minted}{julia}
using Santalo # load the package
A = [1 1 1 1 1; 2 1 0 1 0; 1 2 0 0 1]; b = 1//5*[5; 4; 4];
R, alpha = adjoint_x(A,b)
T, J = ideal_XC(A,b)
\end{minted}
The outputs in line 3 are the adjoint \texttt{alpha} $=\alpha_C$ and a polynomial ring \texttt{R} containing it. In line 4, we compute the ideal \texttt{J} $ = I({\cal X}_C)$ and a polynomial ring \texttt{T} containing it. 
\end{example}
Next, we ask whether ${\cal X}_C$ may fail to be equidimensional. I.e., can it have components of dimension $>d$? We do not know the answer, but we expect that for general matrices we even have $X_C = {\cal X}_C$ (see Conjecture \ref{conj:irred}). We show that the answer is \emph{no} if we perturb the objective function $V_C(x)$ slightly. More precisely, we consider the new objective function 
\begin{equation} \label{eq:VCu}
V_{C,u}(x) \, = \, \frac{\alpha_C(x)^{u_0}}{\prod_{i\in {\cal F}_C} x_i^{u_i}}.\end{equation}
Here $u_0, u_i, i \in {\cal F}_C$ are new parameters. Setting $u = {\bf 1} = (1, \ldots, 1)$ recovers our original objective function $V_C$. We will see in Section \ref{sec:4} that these new parameters have a natural statistical interpretation. The critical point equations of $\log V_{C,u}$ define the~incidence 
\[ \mathscr{X}_{C}^\circ\, = \, \left \{ (x,u) \in \mathbb{C}^n \times \mathbb{C}^{n_C+1} \, : \, \alpha_C(x) \prod_{i\in {\cal F}_C} x_i \neq 0 \quad \text{and} \quad B_C^T \cdot \left ( \frac{u_0 \partial_{x_i} \alpha_C}{\alpha_C} - \frac{u_i}{x_i} \right )_{i \in {\cal F}_C}  = \, 0 \right \}.  \]
We write $\pi_u: \mathscr{X}_{C}^\circ \rightarrow \mathbb{C}^{n_C + 1}$ for the projection $(x,u) \mapsto u$, and denote its fiber $\pi_u^{-1}(u)$ by $\mathscr{X}^\circ_{C,u}$. The variety ${\cal X}^\circ_C$ is $\mathscr{X}_{C,{\bf 1}}^\circ$. The Zariski closure of $\mathscr{X}^\circ_C$ is $\mathscr{X}_C \subset \mathbb{C}^n \times \mathbb{C}^{n_C+1}$. Fibers of $\pi_u: \mathscr{X}_C \rightarrow \mathbb{C}^{n_C+1}$ are denoted by $\mathscr{X}_{C,u}$. We have $\overline{\mathscr{X}_{C,u}^\circ} \subset \mathscr{X}_{C,u}$, and in particular ${\cal X}_C \subset \mathscr{X}_{C,{\bf 1}}$. 
\begin{proposition} \label{prop:puredimd}
    The varieties $\mathscr{X}_{C}^\circ, \mathscr{X}_{C}$ are irreducible of dimension $n_C+1+d$. There is a dense open subset $U \subset \mathbb{C}^{n_C+1}$ such that, for $u \in U$, $\mathscr{X}_{C,u}$ is pure dimensional of dimension~$d$.
\end{proposition}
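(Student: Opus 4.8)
The plan is to first establish irreducibility and dimension of the total space $\mathscr{X}_C^\circ$, and then deduce the generic fiber statement by a standard fibration argument. For the total space: the defining equations of $\mathscr{X}_C^\circ$ are the $n-d$ equations $B_C^T \cdot \left( \frac{u_0 \partial_{x_i}\alpha_C}{\alpha_C} - \frac{u_i}{x_i}\right)_{i\in{\cal F}_C} = 0$, living in the open set $\{\alpha_C(x)\prod_{i\in{\cal F}_C} x_i \neq 0\}$ of $\mathbb{C}^n \times \mathbb{C}^{n_C+1}$. I would clear denominators: multiplying through by $\alpha_C(x)\prod x_i$ makes each equation polynomial and bilinear in $(u, x)$ — more precisely, \emph{linear in $u$}. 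So for each fixed $x$ with $\alpha_C(x)\prod x_i \neq 0$, the fiber over $x$ under the projection $\rho: \mathscr{X}_C^\circ \to \mathbb{C}^n$, $(x,u)\mapsto x$, is the solution set of $n-d$ linear equations in the $n_C+1$ variables $u$. The key linear-algebra point is that these $n-d$ linear forms in $u$ are linearly independent for generic $x$: the coefficient matrix is $B_C^T$ times the $n_C \times (n_C+1)$ matrix whose columns record $\partial_{x_i}\alpha_C \prod_{j\neq i, j\in{\cal F}_C} x_j$ (for $u_0$) and $-\alpha_C \prod_{j\neq i} x_j$ (for $u_i$); since $B_C$ has full rank $n-d$ and the relevant minors are nonzero polynomials in $x$ (one can exhibit a specialization, e.g. near a vertex of $Q_b$, where they are nonzero, using simplicity of $Q_b$ from Proposition~\ref{prop:simple}), the generic rank is $n-d$. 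Hence $\rho$ is, over a dense open subset of $\mathbb{C}^n$ (actually of the open set $\{\alpha_C\prod x_i\neq 0\}$, which is irreducible of dimension $n$), a fiber bundle with irreducible fibers (affine-linear spaces) of dimension $(n_C+1)-(n-d)$. It follows that $\mathscr{X}_C^\circ$ is irreducible of dimension $n + (n_C+1) - (n-d) = n_C + 1 + d$, and $\mathscr{X}_C = \overline{\mathscr{X}_C^\circ}$ is irreducible of the same dimension.

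Next, the generic fiber of $\pi_u$. Since $\mathscr{X}_C$ is irreducible of dimension $n_C+1+d$ and $\pi_u: \mathscr{X}_C \to \mathbb{C}^{n_C+1}$ is a morphism, I would invoke the theorem on dimension of fibers (e.g. \cite[Theorem A.14.1]{sommese2005numerical}, or standard algebraic geometry): there is a dense open $U_1 \subset \mathbb{C}^{n_C+1}$ such that for $u \in U_1$ the fiber $\mathscr{X}_{C,u}$ is either empty or pure of dimension $(n_C+1+d) - \dim\overline{\pi_u(\mathscr{X}_C)}$. To pin this number down to $d$, it suffices to show $\pi_u$ is dominant, i.e. $\overline{\pi_u(\mathscr{X}_C)} = \mathbb{C}^{n_C+1}$. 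But $u = {\bf 1}$ lies in the image: indeed ${\cal X}_C^\circ = \mathscr{X}_{C,{\bf 1}}^\circ$ is nonempty because, by the proof of Proposition~\ref{prop:homeo}/Theorem~\ref{thm:mainsec3}, it contains the Santal\'o points $x^*(b)$ for $b \in C \cap {\rm pos}(A)$. More robustly, the fiber-bundle description of $\rho$ above already shows that for generic $x$ the set of $u$ with $(x,u)\in\mathscr{X}_C^\circ$ is a nonempty affine-linear space, so $\rho$ is dominant onto $\mathbb{C}^n$, and composing with the (dominant, by the linear-independence argument which shows the $u$-fibers are proper affine subspaces moving with $x$) behaviour of $u$ forces $\pi_u$ dominant. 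Taking $U$ to be $U_1$ intersected with the open locus where $\mathscr{X}_{C,u}$ is nonempty (nonempty because $\pi_u$ is dominant, hence its image contains a dense open set) yields the claim: for $u \in U$, $\mathscr{X}_{C,u}$ is pure of dimension $d$.

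The main obstacle is the linear-independence / generic-rank claim for the $n-d$ linear forms in $u$ defining the fiber of $\rho$ over a generic $x$ — equivalently, that the coefficient matrix $B_C^T M(x)$, with $M(x)$ the $n_C \times (n_C+1)$ matrix described above, has generic rank $n-d$. Since $B_C^T$ has full row rank $n-d$ and $M(x)$ has generic rank $n_C \geq n-d$ (its two "blocks" of columns, $\partial_{x_i}\alpha_C \cdot x_{{\cal F}_C\setminus\{i\}}$ and $-\alpha_C\cdot x_{{\cal F}_C\setminus\{i\}}$, span a space of dimension $n_C$ generically — this uses that the monomials $\alpha_C \cdot x_{{\cal F}_C \setminus \{i\}}$ are distinct and that $\alpha_C$ is not divisible by any $x_i$), the product has generic rank $\min(n-d, \mathrm{rank}\,M(x))= n-d$ unless the row span of $B_C^T$ is generically contained in the left kernel of $M(x)^T$, which cannot happen since the left kernel is generically one-dimensional (as $M(x)$ has generic rank $n_C$) and $n-d$ could exceed $1$; when $n-d=1$ one checks directly that $B_C^T M(x)\neq 0$ generically since $B_C \neq 0$ and $\alpha_C$ is a nonzero form. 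I would handle this either by the specialization-near-a-vertex argument sketched above (cleanest, and it only uses simplicity of $Q_b$, already available) or, if a cleaner conceptual route is wanted, by noting that $\mathscr{X}_{C,{\bf 1}}^\circ$ is nonempty and $0$-dimensional over each $b$ by strict convexity, which forces the generic fiber of $\rho$ to have the expected dimension $n_C+1-(n-d)$ and hence the generic rank to be $n-d$. All other steps are routine applications of the fiber-dimension theorem.
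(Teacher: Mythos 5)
Your strategy matches the paper's: analyze $\mathscr{X}_C^\circ$ via the projection $(x,u)\mapsto x$ (your $\rho$, the paper's $\pi_x$), observe that the fibers are affine-linear in $u$, deduce irreducibility and dimension of the total space, and then apply a fiber-dimension theorem to $\pi_u$. However, there is a small but genuine gap at the rank step, and its correct resolution is actually \emph{simpler} than what you wrote.

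You prove only that the $n-d$ linear forms in $u$ are independent for \emph{generic} $x$, and then treat $\rho$ as a fiber bundle over a dense open subset of $\{\alpha_C(x)\prod_{i\in\mathcal{F}_C} x_i \neq 0\}$. That is not by itself enough to conclude irreducibility of $\mathscr{X}_C^\circ$: a priori $\mathscr{X}_C^\circ$ could have extra irreducible components lying over the locus where the rank drops and the linear fiber jumps in dimension, and your write-up does not rule them out. The clean fix, which is the paper's argument, is to notice that the rank never drops. Collect coefficients of $u$: the submatrix of the $(n-d)\times(n_C+1)$ coefficient matrix corresponding to the variables $u_i$, $i\in\mathcal{F}_C$ (ignore the $u_0$ column entirely), is $-B_C^T\cdot\operatorname{diag}(1/x_i)_{i\in\mathcal{F}_C}$. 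The diagonal factor is invertible for every $x$ in the open set, so this block already has rank $\operatorname{rank}(B_C)=n-d$ for \emph{every} such $x$. Hence all fibers of $\rho$ are affine-linear of the same dimension $n_C+1-(n-d)$, the irreducibility of $\mathscr{X}_C^\circ$ follows directly (the paper cites Shafarevich), and no genericity or degenerate-locus analysis is needed. Your excursion through the $n_C\times(n_C+1)$ matrix $M(x)$, its generic rank, and its left kernel is unnecessary and is also where the gap creeps in. Note also that $\operatorname{rank}(B_C)=n-d$ is itself a claim requiring justification (the paper proves it via the normal fan of $Q_b$); your proposal uses it without comment.

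The remaining steps — passing to $\mathscr{X}_C=\overline{\mathscr{X}_C^\circ}$, dominance of $\pi_u$, and the fiber-dimension theorem yielding a dense open $U\subset\mathbb{C}^{n_C+1}$ over which $\mathscr{X}_{C,u}$ is pure of dimension $d$ — are correct and agree with the paper, which uses the corresponding theorems from Shafarevich rather than Sommese--Wampler.
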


\begin{proof}
    We consider the other projection $\pi_x: \mathscr{X}_C^\circ \rightarrow \mathbb{C}^n$ which sends $(x,u)$ to $x$. A fiber $\pi_x^{-1}(x)$ is defined by linear equations in $u_0, u_i, i \in {\cal F}_C$. These equations are linearly independent, because $B_C$ has rank $n-d$. This last claim follows from the fact that the rows of $B$ giving rise to $B_C$ are indexed by $\mathcal{F}_C$, which means that they contain the rays of the normal fan to a full-dimensional polytope $Q_b$ for $b\in \mathrm{int}(C)$. Hence, all fibers of $\pi_x$ are linear, and hence irreducible, of dimension $n_c+1-n+d$. By \cite[Chapter 1, \S 6, Theorem 8]{shafarevich}, $\mathscr{X}^\circ_C$ is irreducible of dimension $n_C + 1 + d$. The same holds for $\mathscr{X}_C$. Since the map $\pi_u: \mathscr{X}_C \rightarrow \mathbb{C}^{n_C+1}$ is dominant, the proposition now follows from \cite[Chapter 1, \S 6, Theorem 7]{shafarevich}.
\end{proof}

The following statement summarizes the role of these varieties in the study of the Santal\'o point of $P_b$: they provide useful semi-algebraic descriptions.

\begin{theorem} \label{thm:santalointersect}
    Let $b \in C \cap {\rm pos}(A)$ for some cell $C \in {\cal C}_A$ and let $P_b^\circ = {\rm relint}(P_b)$. The Santal\'o point $x^*(b)$ is given by 
    \[ x^*(b) \, = \, {\rm SP}(A) \cap P_b^\circ \, = \, S_C \cap P_b^\circ \, = \,  X_C \cap P_b^\circ \, = \, {\cal X}_C^\circ \cap P_b^\circ \, = \, {\cal X}_C \cap P_b^\circ \, = \, \mathscr{X}_{C,{\bf 1}}^\circ \cap P_b^\circ \, = \,  \mathscr{X}_{C,{\bf 1}} \cap P_b^\circ. \]
\end{theorem}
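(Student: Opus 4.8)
The plan is to establish the chain of equalities by proving a sequence of set inclusions, most of which are essentially formal once the right ingredients are assembled. The leftmost equality ${\rm SP}(A) \cap P_b^\circ = S_C \cap P_b^\circ$ follows from Corollary \ref{cor:homeo}: any point of ${\rm SP}(A)$ lying in $P_b^\circ$ must be $\phi(b')$ for a unique $b'$, and since $\phi$ commutes with $A$ (i.e. $A\phi(b') = b'$), the condition $\phi(b') \in P_b$ forces $b' = b$, hence $\phi(b') \in S_C$; conversely $S_C \subset {\rm SP}(A)$. The equality $S_C \cap P_b^\circ = X_C \cap P_b^\circ$ uses Theorem \ref{thm:mainsec3}: clearly $S_C \subset X_C$, so we only need that a point of $X_C = \overline{S_C}$ lying in the \emph{open} polytope $P_b^\circ$ already lies in $S_C$. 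The innermost equalities $X_C \cap P_b^\circ = {\cal X}_C^\circ \cap P_b^\circ = {\cal X}_C \cap P_b^\circ = \mathscr{X}_{C,{\bf 1}}^\circ \cap P_b^\circ = \mathscr{X}_{C,{\bf 1}} \cap P_b^\circ$ follow from the containments $S_C \subset X_C \subset {\cal X}_C \subset \mathscr{X}_{C,{\bf 1}}$ and $S_C \subset {\cal X}_C^\circ \subset {\cal X}_C$, $S_C \subset \mathscr{X}_{C,{\bf 1}}^\circ \subset \mathscr{X}_{C,{\bf 1}}$: it is enough to show that the \emph{largest} of these sets, $\mathscr{X}_{C,{\bf 1}}$, when intersected with $P_b^\circ$, contains nothing beyond the Santal\'o point.

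So the heart of the argument is a single statement: if $x \in P_b^\circ$ and $x$ satisfies the critical-point equations $B_C^T \cdot \left( \partial_{x_i}\alpha_C/\alpha_C - 1/x_i \right)_{i \in {\cal F}_C} = 0$, then $x = x^*(b)$. I would prove this as follows. Given such an $x$, set $y = B^T x$, so that $(b,y)$ lies in $\Sigma(B)$ and $y \in {\rm int}(Q_b)$. The equations say precisely that $B_C^T \cdot (\nabla_x \log V_C)(x)$ restricted to the ${\cal F}_C$-coordinates vanishes; since $\log V_C$ depends only on $x_i$ for $i \in {\cal F}_C$, and since translating back through the coordinate change $\left(\begin{smallmatrix} A \\ B^T \end{smallmatrix}\right)$ identifies $\log V_C(x)$ with $\log f_C(b,y)$ (using $V_C(x) = f(b,y)$ up to the constant from Corollary \ref{cor:ratx}), the vanishing is exactly the condition $\nabla_y \log f_C(b,y) = 0$. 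Because $f_C(b,\cdot) = {\rm vol}_{n-d}(Q_b - \cdot)^\circ$ is strictly convex on ${\rm int}(Q_b)$ (by \cite{meyer1998santalo}, as invoked in the proof of Proposition \ref{prop:homeo}), its logarithm is strictly convex too, so it has at most one critical point in ${\rm int}(Q_b)$, namely $y^*(b)$. Hence $y = y^*(b)$ and $x = x^*(b)$. One must be slightly careful that eliminating the Lagrange multipliers via $B_C^T$ (rather than the full $B^T$) loses no information: this is exactly the observation, already used in the proof of Proposition \ref{prop:puredimd}, that the rows of $B$ indexed by ${\cal F}_C$ have rank $n-d$ because they are the facet normals of the full-dimensional polytope $Q_b$ — so $B_C^T v = 0$ with $v$ supported on ${\cal F}_C$ already forces the relevant components of $\nabla \log f_C$ to vanish.

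I expect the main obstacle to be purely bookkeeping rather than conceptual: keeping straight which equations live in $x$-coordinates versus $(b,y)$-coordinates, and confirming that the passage from the Lagrange-multiplier system on $\mathbb{R}^n$ to the reduced system $B_C^T(\cdots) = 0$ is reversible on $P_b^\circ$ — i.e. that no \emph{spurious} solution in the open polytope is introduced by working with $\alpha_C$ and the $x_i$ as denominators (this is fine since both are strictly positive on $\mathbb{R}^n_{>0} \supset P_b^\circ$) or by restricting to the ${\cal F}_C$-rows of $B$. Once that reduction is in hand, strict convexity does all the real work, and the remaining inclusions are immediate from the definitions of $X_C$, ${\cal X}_C$, ${\cal X}_C^\circ$, $\mathscr{X}_{C,{\bf 1}}^\circ$, $\mathscr{X}_{C,{\bf 1}}$ together with the chain $S_C \subseteq X_C \subseteq {\cal X}_C \subseteq \mathscr{X}_{C,{\bf 1}}$ and $S_C \subseteq {\cal X}_C^\circ$, $S_C \subseteq \mathscr{X}_{C,{\bf 1}}^\circ$, and Proposition \ref{prop:SPcapPb} for the identification with $x^*(b)$.
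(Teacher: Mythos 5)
Your strategy — establishing the chains $S_C \subseteq X_C \subseteq {\cal X}_C \subseteq \mathscr{X}_{C,{\bf 1}}$, $S_C \subseteq {\cal X}_C^\circ \subseteq {\cal X}_C$, and $S_C \subseteq \mathscr{X}_{C,{\bf 1}}^\circ \subseteq \mathscr{X}_{C,{\bf 1}}$, then proving that the outermost set $\mathscr{X}_{C,{\bf 1}}$ meets $P_b^\circ$ only in $x^*(b)$ — is sound and is, in substance, the paper's argument: both proofs reduce everything to (a) strict convexity of the dual volume giving a unique critical point, and (b) positivity of $\alpha_C$ and the $x_i$ on $\mathbb{R}^n_{>0}$ ensuring that the rational critical equations, their denominator-cleared versions, and the defining ideals of the various Zariski closures all carve out the same subset of $P_b^\circ$. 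The paper simply enumerates the intermediate equalities one by one (and routes the first two through Proposition~\ref{prop:SPcapPb}) rather than collapsing them as you do, but this is purely organizational.

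There is, however, one step whose stated justification is mathematically incorrect. You write that because $f_C(b,\cdot)$ is strictly convex on ${\rm int}(Q_b)$, ``its logarithm is strictly convex too.'' The logarithm of a positive, strictly convex function need not be convex at all: for $g(t) = t^2 + 1$, one has $(\log g)''(t) = \dfrac{2 - 2t^2}{(t^2+1)^2} < 0$ for $|t| > 1$. Your conclusion (uniqueness of the critical point of $\log f_C$ on ${\rm int}(Q_b)$) is nevertheless correct, but the right reason is more elementary: since $f_C > 0$ on ${\rm int}(Q_b)$, we have $\nabla \log f_C = \nabla f_C / f_C$, so the critical points of $\log f_C$ coincide exactly with those of $f_C$; strict convexity of $f_C$ itself then gives at most one such point. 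This is the reading consistent with the paper's phrasing (``strict convexity of the dual volume function $\ldots$ there is only one critical point of $\log V_C$''), and you should replace the appeal to strict convexity of the logarithm with this argument. (As an aside, $\log$ of the dual volume \emph{is} in fact convex — it is the universal barrier — but that is a nontrivial theorem, not a formal consequence of strict convexity of the volume.)
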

\begin{proof}
    The first two equalities are essentially Proposition \ref{prop:SPcapPb}. The equality $x^*(b) = {\cal X}_C^\circ \cap P_b^\circ$ follows from strict convexity of the dual volume function on $P_b$: there is only one critical point of $\log V_C$ on $P_b^\circ$. Since $({\cal X}_C \setminus {\cal X}_C^\circ) \cap \mathbb{R}^n_{> 0} = \emptyset$, replacing ${\cal X}_C^\circ$ with ${\cal X}_C$ does not change the intersection with $P_b^\circ$. The equality $x^*(b) = X_C \cap P_b^\circ$ now follows from $x^*(b) \in X_C \subset {\cal X}_C$. The last two equalities follow from $\mathscr{X}_{C,{\bf 1}}^\circ = {\cal X}_C^\circ$ and $(\mathscr{X}_{C,{\bf 1}} \setminus \mathscr{X}_{C,{\bf 1}}^\circ) \cap \mathbb{R}^n_{> 0} = \emptyset$.
\end{proof}
Next, we state a naive degree bound for the varieties defined in this section. 

\begin{proposition} \label{prop:upperbound}
    For $\bullet = X_C, {\cal X}_C$ or $\mathscr{X}_{C,u}$, for generic $u$, we have the inequality
    \[ \deg (\bullet) \leq (2n_C -n + d-1)^{n-d}. \]
\end{proposition}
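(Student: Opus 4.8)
The plan is to deduce all three bounds from a single B\'ezout-type estimate, by realizing the varieties in question as irreducible components of the zero locus of $n-d$ homogeneous polynomials of degree $2n_C-n+d-1$. First I would clear denominators in the defining equations. By Corollary \ref{cor:ratx}, $\alpha_C(x)$ is a sum of the monomials $x_{{\cal F}_C\setminus I(v)}$, each of degree $n_C-(n-d)$ because $Q_b$ is simple and hence $|I(v)|=n-d$; thus $\alpha_C$ is homogeneous of degree $n_C-n+d$ and $x_{{\cal F}_C}=\prod_{i\in{\cal F}_C}x_i$ is homogeneous of degree $n_C$. Multiplying the $i$-th entry of the vector in the definition of $\mathscr{X}_C^\circ$ by the $i$-independent factor $\alpha_C(x)\,x_{{\cal F}_C}$ converts the rational equations into polynomial equations $g_1^u(x)=\cdots=g_{n-d}^u(x)=0$ with
\[ g_j^u(x)\,=\,\sum_{i\in{\cal F}_C}(B_C^T)_{ji}\Bigl(u_0\,\partial_{x_i}\alpha_C\cdot x_{{\cal F}_C}\,-\,u_i\,\alpha_C\cdot x_{{\cal F}_C\setminus\{i\}}\Bigr). \]
Each summand is homogeneous of degree $(n_C-n+d-1)+n_C=(n_C-n+d)+(n_C-1)=2n_C-n+d-1=:D$, and the parameters $u$ appear only as coefficients; the specialization $u={\bf 1}$ recovers the equations defining ${\cal X}_C^\circ$.

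Next I would identify our varieties with components of $V(g_1^u,\dots,g_{n-d}^u)\subset\mathbb{P}^{n-1}$. Since $\alpha_C(x)\,x_{{\cal F}_C}$ is a common nonvanishing factor on the open set $\{\alpha_C(x)\,x_{{\cal F}_C}\neq 0\}$, on that set the system $g_j^u=0$ is equivalent to the rational system, so ${\cal X}_C^\circ$ (for $u={\bf 1}$) and $\mathscr{X}_{C,u}^\circ$ equal $V(g_1^u,\dots,g_{n-d}^u)\cap\{\alpha_C\,x_{{\cal F}_C}\neq0\}$. Taking closures, ${\cal X}_C$ and $\overline{\mathscr{X}_{C,u}^\circ}$ are exactly the unions of those irreducible components of $V(g_1^u,\dots,g_{n-d}^u)$ that are not contained in $\{\alpha_C\,x_{{\cal F}_C}=0\}$; in particular every component of ${\cal X}_C$ is a component of $V(g_1,\dots,g_{n-d})$, and by Theorem \ref{thm:mainsec3} so is $X_C$. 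For generic $u$ I would further use that $\mathscr{X}_{C,u}=\overline{\mathscr{X}_{C,u}^\circ}$: this follows from the irreducibility of $\mathscr{X}_C$ and the pure $d$-dimensionality of $\mathscr{X}_{C,u}$ (both in Proposition \ref{prop:puredimd}), since then the complement $\mathscr{X}_C\setminus\mathscr{X}_C^\circ$ contributes only lower-dimensional pieces over a generic $u$. Hence, for generic $u$, the components of $\mathscr{X}_{C,u}$ are also components of $V(g_1^u,\dots,g_{n-d}^u)$.

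Finally I would apply the refined B\'ezout inequality: for homogeneous forms of degree $D$ in $\mathbb{P}^{n-1}$, the sum of the degrees of all irreducible components of $V(g_1,\dots,g_{n-d})$ is at most $D^{n-d}$. Combined with the previous paragraph this yields $\deg(\bullet)\le D^{n-d}=(2n_C-n+d-1)^{n-d}$ for $\bullet=X_C$, and for ${\cal X}_C$ and $\mathscr{X}_{C,u}$ with generic $u$ even for the sum of the degrees of all their irreducible components, so that the bound holds irrespective of equidimensionality. The hard part will be the middle step: one has to ensure that our varieties are genuine \emph{components} of $V(g_1,\dots,g_{n-d})$, not merely subvarieties of it, because a $d$-dimensional subvariety of such a zero locus can have arbitrarily large degree when the locus fails to be equidimensional. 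For ${\cal X}_C$ and $X_C$ this is immediate from the ``$\neq 0$'' clause in the definition of ${\cal X}_C^\circ$ together with Theorem \ref{thm:mainsec3}; the only genuinely delicate point is the identification $\mathscr{X}_{C,u}=\overline{\mathscr{X}_{C,u}^\circ}$ for generic $u$, for which Proposition \ref{prop:puredimd} supplies exactly what is needed. The degree bookkeeping in the first step is routine, but relies on the simplicity of $Q_b$ via Corollary \ref{cor:ratx}.
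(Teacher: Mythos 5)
Your proof is correct and follows essentially the same approach as the paper's: clear denominators in \eqref{eq:eqnscalX} to obtain $n-d$ homogeneous forms of degree $2n_C-n+d-1$, identify each of the three varieties with a union of irreducible components of the resulting zero locus, and apply B\'ezout. The paper's proof is only a terse sketch; your version correctly supplies the omitted details, in particular the justification that ${\cal X}_C$, $X_C$, and (generic) $\mathscr{X}_{C,u}$ consist of genuine components of $V(g_1,\dots,g_{n-d})$, which is exactly what is needed to invoke the refined B\'ezout inequality, and the use of Proposition \ref{prop:puredimd} to get $\mathscr{X}_{C,u}=\overline{\mathscr{X}_{C,u}^\circ}$ for generic $u$.
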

\begin{proof}
   For ${\cal X}_C$, this follows from clearing denominators in \eqref{eq:eqnscalX} and applying B\'ezout's bound \cite[Chapter I, Theorem 7.7]{hartshorne2013algebraic}. For $X_C$, we use Theorem \ref{thm:mainsec3}. Finally, for $\mathscr{X}_{C,u}$, note that for generic $u \in \mathbb{C}^{n_C+1}$ we have $\mathscr{X}_{C,u} = \overline{\mathscr{X}_{C,u}^\circ}$. Adding the parameters $u$ to the equations \eqref{eq:eqnscalX} does not change the B\'ezout number.
\end{proof}

The bound from Proposition \ref{prop:upperbound} is pessimistic. E.g., for Example \ref{ex:sec3} it reads $14 \leq 49$. In particular, the varieties $X_C = {\cal X}_C$ and $\mathscr{X}_{C,u}$ have the same degree in that example. 
In the next section, we use insights from algebraic statistics to prove a lower bound on $\deg \mathscr{X}_{C,u}$ for generic $u$ (Corollary \ref{cor:degXCu}). That bound is relevant for our homotopy  method for computing Santal\'o points in Section \ref{sec:6}. Also, in experiments, we find that it approximates the actual degree more closely (Example \ref{ex:degreeVSMLdeg}). As motivated by the next conjecture, which is suggested by the examples we computed, we here mean both the degree of $X_C$ and~$\mathscr{X}_{C,u}$. 

\begin{conjecture} \label{conj:irred}
    For generic matrices $A \in \mathbb{R}^{d \times n}_{\geq 0}$ and for each cell $C \in {\cal C}_A$, there exists a dense open subset $U \subset \mathbb{C}^{n_C + 1}$ such that the variety $\mathscr{X}_{C,u}$ is irreducible of dimension $d$ for $u \in U$. Moreover, ${\bf 1} \in U$ and we have $\mathscr{X}_{C,{\bf 1}} = {\cal X}_C = X_C$.
\end{conjecture}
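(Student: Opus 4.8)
We outline a possible approach to Conjecture~\ref{conj:irred}. The plan is to split the statement into two claims and observe that the asserted chain of equalities then follows formally. Write $\pi_u\colon\mathscr{X}_C\to\mathbb{C}^{n_C+1}$ and let $(b,u)\colon\mathscr{X}_C\to\mathbb{C}^{d}\times\mathbb{C}^{n_C+1}$ be the combined map. By Proposition~\ref{prop:puredimd}, $\mathscr{X}_C$ is irreducible of dimension $n_C+1+d$, so $(b,u)$ is dominant onto a variety of the same dimension and hence generically finite; write $N$ for its degree (the ML degree of the weighted Wachspress model of Section~\ref{sec:5}). It suffices to prove \textbf{(A)} the generic fibre of $\pi_u$ is irreducible, and \textbf{(B)} the fibre $\mathscr{X}_{C,{\bf 1}}$ is irreducible of dimension~$d$. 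Indeed, granting~(B), the inclusions $X_C\subseteq{\cal X}_C\subseteq\mathscr{X}_{C,{\bf 1}}$ (the first because $S_C\subseteq{\cal X}_C^{\circ}$, the second as recorded in Section~\ref{sec:4}) place the $d$-dimensional irreducible variety $X_C=\overline{S_C}$ (irreducible by Theorem~\ref{thm:mainsec3} and Corollary~\ref{cor:homeo}, since $S_C$ is a continuous image of the connected set $C\cap{\rm pos}(A)$) inside the $d$-dimensional irreducible variety $\mathscr{X}_{C,{\bf 1}}$, forcing $X_C={\cal X}_C=\mathscr{X}_{C,{\bf 1}}$. To upgrade~(A) and~(B) to the dense open $U$ in the conjecture, one uses that $\mathscr{X}_C^{\circ}$ is a complete intersection, hence Cohen--Macaulay, so $\pi_u$ is flat over the open locus where the fibre has the minimal dimension $d$; there the geometrically irreducible fibres form an open set (after a fibrewise compactification), dense by~(A) and containing ${\bf 1}$ by~(B). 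So the real content is (A) and (B).

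\medskip\noindent\emph{Proving (A).} If $N=1$, then $(b,u)$ is birational, so $\mathscr{X}_{C,u}$ is rational --- hence irreducible --- for generic~$u$. If $N\ge 2$, then the locus $\Delta$ over which $(b,u)$ fails to be an $N$-sheeted topological covering must be a hypersurface: otherwise $(\mathbb{C}^{d}\times\mathbb{C}^{n_C+1})\setminus\Delta$ is simply connected, the cover is trivial, and $\mathscr{X}_C$ is reducible. Now take a generic $u_0$ and cut the slice $\{u=u_0\}\cong\mathbb{C}^{d}$ out of $\mathbb{C}^{d}\times\mathbb{C}^{n_C+1}$ by the coordinate pencils $\{u_i=(u_0)_i\}$, one at a time; by the Zariski--Lefschetz theorem on fundamental groups of complements of hypersurfaces, each step surjects on $\pi_1$ provided the intermediate slice has dimension $\ge 2$, which holds throughout exactly when $d\ge 2$. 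Then the monodromy of the cover $\mathscr{X}_{C,u_0}\to\mathbb{C}^{d}$ agrees with the full monodromy group of $(b,u)$, which is transitive because $\mathscr{X}_C$ is irreducible, so $\mathscr{X}_{C,u_0}$ is irreducible. The case $d=1$ is trivial: there $P_b$ is a simplex, $\alpha_C(x)=Ax$, and $\mathscr{X}_{C,u}$ is a line. (A caveat: applying Zariski--Lefschetz to the fixed coordinate pencils rather than to fully general hyperplanes needs the affine version of the theorem for a generic value of a linear form; this should be routine but must be stated carefully. Note that~(A) requires no genericity of $A$.)

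\medskip\noindent\emph{Proving (B), and the main obstacle.} Since ${\bf 1}$ is a highly non-generic point of $\mathbb{C}^{n_C+1}$, (A) gives no information there and new input is needed. The leverage is positivity: by strict convexity of the dual volume, for each $b\in C\cap{\rm pos}(A)$ the Santal\'o point $x^*(b)$ is a nondegenerate critical point, so $S_C\subseteq\mathscr{X}_{C,{\bf 1}}$ is a $d$-dimensional semialgebraic set of nondegenerate critical points and its closure $X_C$ is, by Theorem~\ref{thm:mainsec3}, a genuine $d$-dimensional component. What must be excluded, for generic $A$, is that $\mathscr{X}_{C,{\bf 1}}={\cal X}_C$ gains a component of dimension $>d$ or a \emph{second} component of dimension $d$ as $u\to{\bf 1}$ --- equivalently, that some component of $\mathscr{X}_{C,u}$ jumps in dimension or escapes to infinity in that limit. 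I would argue by semicontinuity over the space of matrices $A$, restricted to a stratum on which $n_C$, ${\cal F}_C$, ${\cal V}_C$ stay constant: the bad locus of matrices is closed and proper, because it avoids the explicit matrices computed in Section~\ref{sec:4} (e.g.\ Example~\ref{ex:sec3}, where $X_C={\cal X}_C$ is prime). The hard part is precisely this: the B\'ezout bound of Proposition~\ref{prop:upperbound} is far from sharp, so the standard ``no solutions at infinity'' mechanism is unavailable, and with $u$ pinned to ${\bf 1}$ one cannot invoke genericity of the data as in the theory of ML degrees; making the $A$-semicontinuity argument rigorous demands a family of the $\mathscr{X}_C$ over each stratum that is proper enough for component counts of fibres to behave well, and --- more seriously --- the assurance that \emph{every} stratum contains a good matrix, not just the finitely many already checked. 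A conceptual substitute, for instance identifying ${\cal X}_C$ with a likelihood correspondence of a smooth very affine Wachspress model and invoking an irreducibility theorem along the lines of Huh--Sturmfels, would be the cleanest way to close the gap.
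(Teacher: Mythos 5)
This statement is a conjecture; the paper offers no proof, so there is nothing to compare your argument against. Your proposal is a plan of attack, and you are candid that it does not close. The reduction to claims (A) and (B) is clean and correct: by Theorem~\ref{thm:mainsec3}, $X_C$ is a $d$-dimensional irreducible component of $\mathcal{X}_C \subseteq \mathscr{X}_{C,\mathbf{1}}$, so if $\mathscr{X}_{C,\mathbf{1}}$ is irreducible of dimension $d$ the three varieties coincide. The upgrade to the dense open $U$ via miracle flatness is also the right mechanism, although the phrase ``after a fibrewise compactification'' hides genuine work: the statement that geometric irreducibility of fibres is an open condition on the base holds for proper (or proper-after-compactification, flat) morphisms, and $\pi_u$ is far from proper, so one must control what happens at the boundary of a compactification that you have not constructed. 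For (A), note also that irreducibility of $\mathscr{X}_C$ alone does not give irreducibility of a generic fibre of $\pi_u$ (a dominant map from an irreducible variety can have all fibres reducible), so the monodromy/Lefschetz step is indeed necessary and not decorative; your caveat about coordinate pencils versus generic pencils is the right place to worry, and the side claim that (A) ``requires no genericity of $A$'' should be proved rather than asserted. Minor: citing connectedness of $S_C$ as the reason $X_C$ is irreducible is not quite right; irreducibility is what Theorem~\ref{thm:mainsec3} itself supplies, and connectedness alone would not.

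The genuine gap is (B), exactly as you diagnose. The $A$-semicontinuity argument cannot be run as written: reducing the conjecture to ``every combinatorial stratum of matrices contains a good $A$'' is not a reduction, it \emph{is} the conjecture; and even over a fixed stratum you would need an equidimensional (ideally proper, flat) family over which the number and dimensions of components of $\mathscr{X}_{C,\mathbf{1}}(A)$ vary semicontinuously in $A$, which is not automatic since the defining equations of $\mathcal{X}_C$ move non-linearly with $A$ and there is no obvious compactification that stays flat. More fundamentally, $\mathbf{1}$ is a non-generic parameter and the B\'ezout bound of Proposition~\ref{prop:upperbound} is far too weak to control components escaping to or splitting off at infinity in the degeneration $u \to \mathbf{1}$; this is precisely why (B) resists, and why the authors leave it as a conjecture. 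Your closing suggestion --- realizing $\mathcal{X}_C$ as the likelihood correspondence of a smooth very affine Wachspress model and importing an irreducibility theorem in the spirit of Huh--Sturmfels, so that one never specializes $u$ at all --- is the most promising route and would also subsume (A), but as stated it is a direction, not an argument. As it stands the proposal is an organized and honest reduction rather than a proof.
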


\section{Wachspress models} \label{sec:5}

In algebraic statistics \cite{drton2008lectures,sullivant2018algebraic}, a statistical model for a discrete random variable with $N+1$ states is the intersection of a complex algebraic variety $M_{\mathbb{C}}$ with the probability simplex 
\[ \Delta_N \, = \, \{ (p_0, \ldots, p_N) \in \mathbb{R}^{N+1}_{>0} \, : \, p_0 + \cdots + p_N = 1 \}. \]
We denote this model by $M = M_{\mathbb{C}} \cap \Delta_N$, and require that this intersection is non-empty. For our purposes, it suffices to consider parametric models, i.e., models that come with a rational parametrization. This is true for many commonly used models, including exponential families and (conditional) independence models. Let  $p_i(y) = p_i(y_1, \ldots, y_m), i = 0, \ldots, N$ be rational functions of $m < N$ variables such that $\sum_{i=0}^N p_i = 1$. The variety $M_{\mathbb{C}}$ is the closure of the image of the rational map $\mathbb{C}^m \dashrightarrow \mathbb{C}^{N+1}$ given by $y \mapsto (p_0(y), \ldots, p_N(y))$. \emph{Maximum likelihood estimation} for the model $M$ means finding the probability distribution $p \in M$ which makes an experimental observation $u \in \mathbb{N}^{N+1}$ most likely. More precisely, suppose that state $i$ was observed $u_i$ times in an experiment. One maximizes the \emph{log-likelihood function} 
\[ L_u \, = \, \log p_0^{u_0} p_1^{u_1} \cdots p_N^{u_N}\]
subject to the constraint $p \in M$. To study this problem algebraically, one often relaxes it to finding all complex critical points of $L_u$ on an open subset of $M_{\mathbb{C}}$. In our parametric setting, we solve the system of rational function equations 
\begin{equation} \label{eq:likelihoodeq}
\partial_{y_i} L_u(y) \, = \, \sum_{j=0}^N u_j \, \frac{\partial_{y_i}p_j(y)}{p_j(y)} = \, 0, \quad i = 1, \ldots m \quad \text{for} \quad y \in \mathbb{C}^m \setminus D. \end{equation}
Here $D \subset \mathbb{C}^m$ is the union of the supports of the divisors ${\rm div}(p_j), j = 0, \ldots, N$. That is, it is the union of all hypersurfaces in $\mathbb{C}^m$ along which one of the $p_j$ has a zero or a pole. We refer to these equations as the \emph{likelihood equations} for the model $M$. The number of complex solutions $y \in \mathbb{C}^m \setminus D$ for generic, complex data $u \in \mathbb{C}^{N+1}$ is an invariant called the \emph{maximum likelihood degree} (ML degree) of $M_{\mathbb{C}}$ \cite{catanese2006maximum}, which we denote by ${\rm MLdeg}(M_{\mathbb{C}})$. This assumes that the parametrization map $\mathbb{C}^m \setminus D \rightarrow M_{\mathbb{C}}$ given by $y \mapsto (p_0(y), \ldots, p_N(y))$ is birational.  

The models that are relevant to our story are called \emph{Wachspress models}. These are associated to simple polytopes $Q \subset \mathbb{R}^m$, and the number of states equals the number of vertices $|{\cal V}(Q)|$. We use the notation \eqref{eq:minfacets} for the face description of $Q$. The parametrizing functions of our model are naturally obtained from the formula \eqref{eq:adjoint} for the adjoint: 
\begin{equation} \label{eq:wachspressparam}
p_v(y) \, = \, \frac{|\det W_{I(v)}| \cdot \prod_{i\notin I(v)}(c_i + \langle w_i, y \rangle)}{\alpha_Q(y)}, \quad v \in {\cal V}(Q).\end{equation}
This gives a rational map $\mathbb{C}^m \dashrightarrow \mathbb{P}^{|{\cal V}(Q)|-1}$, whose image closure $M_{\mathbb{C}}(Q)$ is the \emph{Wachspress variety} of $Q$. Note that the coordinates $p_v$ for $v \in {\cal V}(Q)$ sum to 1 by construction. These varieties appear in the context of geometric modelling \cite{garcia2010linear}, and Wachspress surfaces were studied in \cite{irving2014geometry}. To the best of our knowledge, the interpretation as a statistical model first appeared in \cite[Section 2]{kohn2020moment}. Bayesian integrals for these models were studied in \cite{borinsky2023bayesian}. The divisor $D$ from \eqref{eq:likelihoodeq} for the Wachspress model $M_{\mathbb{C}}(Q)$ is the union of the adjoint hypersurface $\alpha_Q(y) = 0$ and the facet hyperplanes $c_i + \langle w_i,y \rangle = 0$. We denote this hypersurface by $D(Q)$. 

\begin{lemma} \label{lem:isom}
    Let $Q \subset \mathbb{R}^m$ be a simple polytope with Wachspress model $M_{\mathbb{C}}(Q)$. Let ${\cal H} \subset \mathbb{P}^{N}$  be the divisor $(\sum_{i=0}^N p_i)\prod_{i=0}^Np_i = 0$ for $N = |{\cal V}(Q)|-1$. The map $\psi\!:\!\mathbb{C}^m \setminus D(Q) \rightarrow M_{\mathbb{C}}(Q) \setminus {\cal H}$ given by $y \mapsto (p_v(y))_{v \in {\cal V}(Q)}$, with $p_v$ from \eqref{eq:wachspressparam}, is an isomorphism. 
\end{lemma}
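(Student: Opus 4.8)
The plan is to exhibit an explicit two-sided inverse to $\psi$ and check that both $\psi$ and its inverse are morphisms on the indicated open sets. The key observation is that the Wachspress parametrization \eqref{eq:wachspressparam} is essentially a ``rational reparametrization'' of the facet-distance functions: the $m+1$ quantities $\alpha_Q(y)$ and $\ell_i(y) = c_i + \langle w_i, y\rangle$ appear multiplicatively in the numerators and denominators of the $p_v$, and the vertex index sets $I(v)$ are exactly the size-$m$ subsets of $\{1,\dots,k\}$ that occur. So I would first argue that, given a point $(p_v)_{v \in {\cal V}(Q)}$ in $M_{\mathbb C}(Q) \setminus {\cal H}$, one can recover the ratios $\ell_i(y) : \ell_j(y)$ of the facet forms, and then, using any single affine relation among the $w_i$ together with the constant terms, pin down the $\ell_i(y)$ up to a common scalar; since the $p_v$ sum to $1$, this common scalar is determined (it equals $1/\alpha_Q(y)$), and then $y$ itself is read off from $\ell_i(y) = c_i + \langle w_i, y\rangle$ by solving a linear system (the $w_i$ span $\mathbb R^m$ since $Q$ is full-dimensional). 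This produces a rational map $M_{\mathbb C}(Q) \setminus {\cal H} \to \mathbb C^m$ which is a candidate inverse.

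More concretely, I would carry out the steps as follows. \emph{Step 1:} Fix a vertex $v_0$ and, for each $i \notin I(v_0)$, find another vertex $v$ with $I(v) = (I(v_0) \setminus \{j\}) \cup \{i\}$ for some $j$ — simplicity of $Q$ guarantees such ``neighboring'' vertices along edges. Then $p_{v}/p_{v_0}$ is, up to the known nonzero constant $|\det W_{I(v)}|/|\det W_{I(v_0)}|$, equal to $\ell_j(y)/\ell_i(y)$. Working through a spanning tree of the edge graph, all ratios $\ell_i(y)/\ell_{i'}(y)$ become rational functions of the $p_v$, well-defined wherever the $p_v$ are nonzero. \emph{Step 2:} Normalize, say, $\ell_1(y) = t$ and express $\ell_i(y) = t \cdot r_i(p)$ for rational functions $r_i$; substituting into $\sum_v p_v = 1$ and clearing $\alpha_Q$ (which is a fixed polynomial in the $\ell_i$ by \eqref{eq:adjoint}) gives one equation determining $t$ as a rational function of $p$, using that $\alpha_Q(y) \ne 0$ and $\sum p_v = 1 \ne 0$ on the target. \emph{Step 3:} Recover $y$ by inverting the affine map $y \mapsto (\langle w_i, y\rangle)_i$; since $\{w_i\}$ spans $\mathbb R^m$, pick $m$ independent ones and solve. \emph{Step 4:} Check that this composite is a morphism $M_{\mathbb C}(Q)\setminus{\cal H} \to \mathbb C^m$ avoiding $D(Q)$ (the image avoids the facet hyperplanes since the $\ell_i(y)$ came out nonzero, and avoids $\alpha_Q = 0$ since $\alpha_Q = 1/\!\sum$-normalization is finite and nonzero), and verify $\psi \circ (\text{inverse}) = \mathrm{id}$ and $(\text{inverse})\circ \psi = \mathrm{id}$ by direct substitution. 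Finally note $\psi$ itself is a morphism on $\mathbb C^m \setminus D(Q)$ because $\alpha_Q$ and the $\ell_i$ are nonvanishing there.

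I expect the main obstacle to be \emph{Step 1}: making precise, and correct, the claim that one can reconstruct \emph{all} the ratios $\ell_i(y)/\ell_{i'}(y)$ from the coordinates $p_v$ using only the combinatorics of $Q$. This needs that the edge graph of the simple polytope $Q$ is connected (true) and that each edge indeed corresponds to a pair of vertices whose index sets differ in exactly one element (true for simple polytopes), but one must be careful that the relevant $|\det W_{I(v)}|$ are nonzero — which holds because $I(v)$ indexes $m$ facets meeting at a vertex of a simple full-dimensional polytope, so $W_{I(v)}$ is invertible. A secondary subtlety is confirming that the resulting rational map is genuinely regular (not merely rational) on the \emph{whole} of $M_{\mathbb C}(Q)\setminus{\cal H}$: one must check no spurious denominators vanish there, which follows from the fact that on the target every $p_v \ne 0$, $\sum p_v \ne 0$, and hence (tracing the construction) every $\ell_i(y)$ and $\alpha_Q(y)$ is finite and nonzero. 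Once regularity on both sides is established, the isomorphism claim is immediate from the identity verifications, and it is convenient to record that the Jacobian of $\psi$ is therefore nonvanishing on $\mathbb C^m \setminus D(Q)$, a fact likely used in the sequel for comparing ML degrees with patch-variety degrees.
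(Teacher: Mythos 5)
Your approach --- constructing an explicit inverse of $\psi$ from the combinatorics of $Q$ --- is genuinely different from the paper's, which simply notes that $\psi$ differs from the Wachspress map of Kohn and Ranestad by a diagonal coordinate rescaling $p_v \mapsto p_v/|\det W_{I(v)}|$ of $\mathbb{P}^N$ and then invokes \cite[Theorem~4]{kohn2020projective} for invertibility. A self-contained explicit inverse would be a nice alternative, but as written Steps~1--2 contain a genuine gap.

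The claim in Step~1 that \emph{all} ratios $\ell_i(y)/\ell_{i'}(y)$ become rational functions of the $p_v$ by traversing the edge graph of $Q$ is false in general. Each edge of $Q$ between vertices $v$ and $v'$ with $I(v)\setminus I(v')=\{j\}$ and $I(v')\setminus I(v)=\{i\}$ recovers only the ratio $\ell_j/\ell_i$ for that particular swap pair $(i,j)$; the induced graph on facet indices (join $i$ and $j$ if some edge swaps them) is typically disconnected. For the cube $Q=[-1,1]^3$, every edge of $Q$ changes exactly one coordinate, so the only swap pairs are $\{x_a=1,\;x_a=-1\}$ for $a=1,2,3$: the swap graph has three components, and no chain of edge-ratios relates $\ell_{x_1=1}$ to $\ell_{x_2=1}$. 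Connectivity of the edge graph of $Q$, which you correctly invoke, does not give connectivity of this auxiliary facet-swap graph. Consequently Step~2 only determines $(\ell_1(y),\dots,\ell_k(y))$ up to one unknown scalar \emph{per component} of the swap graph, and the single normalization $\sum_v p_v=1$ cannot fix several independent scalars. The natural repair is to exploit affinity harder: each known ratio $\ell_j(y)/\ell_i(y)=r$ yields a linear equation $(c_j-rc_i)+\langle w_j-rw_i,\,y\rangle=0$ on $y$, and one must then show that the resulting linear system always has rank $m$. That last claim is precisely the nontrivial content of the theorem the paper cites; establishing it from scratch requires an additional argument that your sketch does not supply.
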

\begin{proof}
    First note that the morphism $\psi$ is well-defined. The functions $p_v$ are regular on $\mathbb{C}^m \setminus D(Q)$, and the image of $\mathbb{C}^m \setminus D(Q)$ is contained in the complement of ${\cal H}$. 

    It remains to show that $\psi$ is invertible. Consider the automorphism $\varphi$ of $\mathbb{P}^N$ defined by $p_v \mapsto \frac{p_v}{|\det W_{I(v)}|}$. The map $\varphi \circ \psi$ is a restriction of the \emph{Wachspress map} defined in \cite[Equation (5)]{kohn2020projective}, which is invertible by \cite[Theorem 4]{kohn2020projective}. It follows that $\psi$ is invertible too. 
\end{proof}

\begin{corollary} \label{cor:eulerchar}
    The maximum likelihood degree ${\rm MLdeg}(M_{\mathbb{C}}(Q))$ of the Wachspress model of $Q$ equals the absolute value of the Euler characteristic $\chi(\mathbb{C}^m \setminus D(Q)) = \chi(M_{\mathbb{C}}(Q) \setminus {\cal H})$. 
\end{corollary}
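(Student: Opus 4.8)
The plan is to derive the corollary from the topological interpretation of the maximum likelihood degree due to Huh \cite{huh2013maximum}, refining \cite{catanese2006maximum}: for a smooth very affine variety $U$ of dimension $m$, a generic master function on $U$ has exactly $(-1)^m\chi(U)$ critical points, all nondegenerate. Lemma \ref{lem:isom} is precisely what is needed both to check the hypotheses of this theorem and to transport the two Euler characteristics onto each other.

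First, I would record that the equality $\chi(\mathbb{C}^m\setminus D(Q)) = \chi(M_{\mathbb{C}}(Q)\setminus{\cal H})$ is immediate: by Lemma \ref{lem:isom} the map $\psi$ is an isomorphism of varieties, hence a homeomorphism of complex points, and Euler characteristic is a topological invariant. Next, Lemma \ref{lem:isom} shows that $U := M_{\mathbb{C}}(Q)\setminus{\cal H}$ is smooth of dimension $m$: it is isomorphic to the open subset $\mathbb{C}^m\setminus D(Q)$ of affine space, which is dense in $M_{\mathbb{C}}(Q)$ by definition of the Wachspress variety. It is also very affine: rescaling the homogeneous coordinates $p_v$ so that $\sum_v p_v = 1$ identifies $\mathbb{P}^N\setminus{\cal H}$ with the closed subvariety $\{p\in(\mathbb{C}^*)^{|{\cal V}(Q)|}:\sum_v p_v=1\}$ of a torus, and $U$ is a closed subvariety of the latter. (Equivalently, $y\mapsto(\ell_1(y),\ldots,\ell_k(y),\alpha_Q(y))$ is a closed embedding of $\mathbb{C}^m\setminus D(Q)$ into $(\mathbb{C}^*)^{k+1}$, since the facet normals of the full-dimensional polytope $Q$ span $\mathbb{R}^m$.)

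Then I would match the definition of the ML degree with the count in Huh's theorem. Since $\psi$ is an isomorphism it is birational, so ${\rm MLdeg}(M_{\mathbb{C}}(Q))$ is well defined, and the solutions $y\in\mathbb{C}^m\setminus D(Q)$ of the likelihood equations \eqref{eq:likelihoodeq} are in bijection with the critical points on $U$ of the master function $\prod_{v\in{\cal V}(Q)}p_v^{u_v}$, whose number for generic $u\in\mathbb{C}^{|{\cal V}(Q)|}$ is by definition ${\rm MLdeg}(M_{\mathbb{C}}(Q))$. Huh's theorem applied to $U$ in these coordinates gives ${\rm MLdeg}(M_{\mathbb{C}}(Q)) = (-1)^m\chi(U)$. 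As the left-hand side is a nonnegative integer, this forces $(-1)^m\chi(U) = |\chi(U)|$, and combining with the first step yields ${\rm MLdeg}(M_{\mathbb{C}}(Q)) = |\chi(\mathbb{C}^m\setminus D(Q))| = |\chi(M_{\mathbb{C}}(Q)\setminus{\cal H})|$.

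The one point that needs care — and which I expect to be the only real obstacle — is the matching of genericity conditions. It is essential to apply Huh's theorem on the model side, in the coordinates $p_v$, where $u$ ranges over all of $\mathbb{C}^{|{\cal V}(Q)|}$, exactly as in the definition of the ML degree. If instead one rewrites $L_u$ back in the $y$-coordinates in terms of the forms $\ell_i$ and $\alpha_Q$, the effective exponent vectors only sweep out a proper linear subspace of $\mathbb{C}^{k+1}$ (because each vertex of the simple polytope $Q$ lies on exactly $m$ facets), so Huh's genericity hypothesis cannot be invoked directly there. With this caveat, the argument is formal once smoothness and very-affineness of $U$ are established, which is exactly the content of Lemma \ref{lem:isom}.
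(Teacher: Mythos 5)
Your argument is correct and follows essentially the same route as the paper's (very terse) proof: the paper also uses Lemma~\ref{lem:isom} to establish smoothness of $M_{\mathbb{C}}(Q)\setminus\mathcal{H}$ and then invokes the Huh--Sturmfels formulation of Huh's theorem (\cite[Theorem~1.7]{huh2014likelihood}), which is just a repackaging of \cite[Theorem~1]{huh2013maximum} that you cite. Your explicit checks of very-affineness and the remark that Huh's genericity hypothesis must be applied in the $p_v$-coordinates (where $u$ is unconstrained) rather than in the $\ell_i,\alpha_Q$-coordinates are worthwhile elaborations that the Huh--Sturmfels statement absorbs automatically, but the underlying mechanism is identical.
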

\begin{proof}
    By Lemma \ref{lem:isom}, $M_{\mathbb{C}}(Q) \setminus {\cal H}$ is smooth, so \cite[Theorem~1.7]{huh2014likelihood} implies the statement.
\end{proof}
Solving the likelihood equations of $M_{\mathbb{C}}(Q)$ with data $u$ is equivalent to computing the intersection of the fiber $\mathscr{X}_{C,u'}^\circ$, defined in Section \ref{sec:3}, with a linear space. The parameters $u' \in \mathbb{C}^{n_C +1}$ are obtained from $u$ via a linear map. This is the content of our next theorem. 
\begin{theorem} \label{thm:MLEvsOPT}
    Let $Q_{b_0} = B^T \cdot P_{b_0}$ for $b_0 \in C \cap {\rm pos}(A)$, where $C \in {\cal C}_A$ is a cell in the chamber complex of $A$. The complex critical points of the log-likelihood function $L_u(y)$ for the Wachspress model $M_{\mathbb{C}}(Q_{b_0})$ with data $u = (u_v)_{v \in {\cal V}(Q_{b_0})}$ are in one-to-one correspondence with the complex critical points of $V_{C,u'}(x)$ from \eqref{eq:VCu} on  $\{Ax = {b_0}\}$, where $u'$ has entries 
    \begin{equation} \label{eq:uprime} u_0' \, = \, - \sum_{v \in {\cal V}(Q_b)} u_v, \quad u_i' \,  = \, - \sum_{v:i \notin I(v)} u_v , \, i \in {\cal F}_C. \end{equation}
    More precisely, the critical points $y_{\rm crit} \in \mathbb{C}^{n-d} \setminus D(Q_{b_0})$ of $L_u(y)$ are $B^T \cdot x_{\rm crit}$, where $x_{\rm crit}$ ranges over the points in the intersection 
    $\mathscr{X}_{C,u'}^\circ \cap \{ A \, x = {b_0}\}$, 
\end{theorem}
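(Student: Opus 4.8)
The plan is to compare the two systems of critical-point equations directly in $y$-coordinates, using the coordinate change $y = B^T x$ from \eqref{eq:coordchange} to pass between the optimization problem on $\{Ax = b_0\}$ and a free optimization problem on $\ker A \cong \mathbb{C}^{n-d}$. First I would recall that, by Lemma \ref{lem:isom}, the parametrization $y \mapsto (p_v(y))_{v \in {\cal V}(Q_{b_0})}$ is an isomorphism from $\mathbb{C}^{n-d} \setminus D(Q_{b_0})$ onto an open subset of the Wachspress variety, so the critical points of $L_u$ on $M_{\mathbb C}(Q_{b_0})$ are exactly the critical points of the pulled-back function $L_u(y) = \sum_v u_v \log p_v(y)$ on $\mathbb{C}^{n-d} \setminus D(Q_{b_0})$. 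Plugging in the explicit formula \eqref{eq:wachspressparam} for $p_v$, we get
\[ L_u(y) \, = \, \sum_{v} u_v \log |\det W_{I(v)}| \; + \; \sum_v u_v \sum_{i \notin I(v)} \log \ell_i(y) \; - \; \Bigl(\sum_v u_v\Bigr) \log \alpha_{Q_{b_0}}(y), \]
where $\ell_i(y) = c_i + \langle w_i, y\rangle$. The first term is a constant. Collecting the coefficient of $\log \ell_i(y)$ over all $v$ with $i \notin I(v)$ and the coefficient of $\log \alpha_{Q_{b_0}}$ gives precisely the substitution \eqref{eq:uprime}: $L_u(y)$ agrees, up to an additive constant, with $\log V_{C,u'}(B^T x)|_{\text{some lift}}$, i.e. with $u_0' \log \alpha_C - \sum_{i \in {\cal F}_C} u_i' \log x_i$ read through the linear relations $x_i = \ell_i(y)$ for $i \in {\cal F}_C$ (recall that, by the remarks after Corollary \ref{cor:ratx}, the entries of \eqref{eq:faceteqs} indexed by ${\cal F}_C$ are exactly the facet forms $\ell_i$, and $\alpha_C(x)$ restricted to $\{Ax = b_0\}$ is $\alpha_{Q_{b_0}}(y)$ up to a nonzero scalar).

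The second step is to translate "critical point on $\{Ax = b_0\}$" into "critical point in $y$". On the affine subspace $\{Ax = b_0\}$ the map $x \mapsto y = B^T x$ is an isomorphism onto $\mathbb{C}^{n-d}$ (this is the content of \eqref{eq:coordchange}–\eqref{eq:bytox}): a point $x$ with $Ax = b_0$ is a critical point of $\log V_{C,u'}(x)$ constrained to $\{Ax=b_0\}$ if and only if $B^T \nabla_x \log V_{C,u'}(x)$ vanishes, which is exactly the elimination step producing the equations $B_C^T\bigl(\tfrac{u_0'\partial_{x_i}\alpha_C}{\alpha_C} - \tfrac{u_i'}{x_i}\bigr)_{i \in {\cal F}_C} = 0$ defining $\mathscr{X}_{C,u'}^\circ$; equivalently, $y = B^T x$ is an unconstrained critical point of $L_u$ as a function on $\mathbb{C}^{n-d}$. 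Since the chain rule gives $\nabla_y L_u(y) = B_C^T \bigl(\tfrac{u_0'\partial_{x_i}\alpha_C}{\alpha_C} - \tfrac{u_i'}{x_i}\bigr)_{i \in {\cal F}_C}$ once one uses $\partial_{y_j} \ell_i(y) = (w_i)_j = (B_C)_{ij}$, the two equation systems are literally the same system, and the correspondence $y_{\rm crit} = B^T x_{\rm crit}$ with $x_{\rm crit} \in \mathscr{X}_{C,u'}^\circ \cap \{Ax = b_0\}$ follows.

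The remaining point to nail down — and the one I expect to be the only real subtlety — is matching the domains on which critical points are counted: the likelihood equations are solved on $\mathbb{C}^{n-d} \setminus D(Q_{b_0})$, while $\mathscr{X}_{C,u'}^\circ$ is cut out by the open conditions $\alpha_C(x) \prod_{i \in {\cal F}_C} x_i \neq 0$. Under $x_i = \ell_i(y)$, the locus $\prod_{i \in {\cal F}_C} x_i = 0$ is exactly the union of facet hyperplanes, and $\alpha_C(x) = 0$ is the adjoint hypersurface, so the removed locus on the $x$-side corresponds precisely to $D(Q_{b_0})$; one must also check that the non-facet coordinates $x_i$, $i \notin {\cal F}_C$, are determined by $(b_0, y)$ via \eqref{eq:bytox} and impose no further constraint because $V_{C,u'}$ does not involve them. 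With this dictionary the correspondence is a bijection, and applying $B^T$ to $x_{\rm crit}$ recovers $y_{\rm crit}$, which proves the theorem.
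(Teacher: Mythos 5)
Your argument follows essentially the same route as the paper: write out $L_u$ using the Wachspress parametrization, match it (up to an additive constant) with $\log V_{C,u'}$ under the coordinate change, and then apply the chain rule to compare the two critical-point systems. Your discussion of domain matching is a helpful addition that the paper leaves implicit.

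There is one genuine inaccuracy in the chain-rule step. You claim $\partial_{y_j}\ell_i(y) = (w_i)_j = (B_C)_{ij}$, i.e.\ that the matrix of facet normals $W_C$ literally equals $B_C$. This is false in general: from $\begin{pmatrix} A \\ B^T\end{pmatrix}^{-1} = (M \mid N)$ one finds $N = B(B^TB)^{-1}$, hence $w_i$ is the $i$-th row of $B(B^TB)^{-1}$, not of $B$, and $W_C = B_C(B^TB)^{-1}$. They coincide only when $B$ has orthonormal columns, which is not assumed. The correct statement, and the one the paper uses, is that the chain rule yields the system $W_C^T\bigl(\tfrac{u_0'\partial_{x_i}\alpha_C}{\alpha_C}-\tfrac{u_i'}{x_i}\bigr)_{i\in\mathcal F_C}=0$, and since $W_C = B_C(B^TB)^{-1}$ with $(B^TB)^{-1}$ invertible, $W_C$ and $B_C$ have the same column span, so this system has the same solution set as $B_C^T(\cdot)=0$ defining $\mathscr X_{C,u'}^\circ$. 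Your conclusion is unchanged once this is fixed, but the equality $W_C = B_C$ as written is wrong and should be replaced by the column-span argument.
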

\begin{proof}
    The log-likelihood function $L_u(y)$ for the data $u = (u_v)_{v \in {\cal V}(Q_{b_0})}$ is given by 
    \[ {\rm exp} \,  L_u(y)  =  \prod_{v \in {\cal V}(Q_{b_0})} \left (  \frac{|\det W_{I(v)}| \cdot \prod_{i \in {\cal F}_C \setminus I(v)}(c_i(b_0) + \langle w_i, y \rangle)}{\alpha_{Q_{b_0}}(y)} \right )^{u_v} =  V_{C,u'}\left ( \begin{pmatrix}
        A \\ B^T
    \end{pmatrix}^{-1} \begin{pmatrix}
        b_0 \\ y
    \end{pmatrix}\right ). \]
    This uses the change of coordinates \eqref{eq:coordchange}: $x_i = c_i(b_0) + \langle w_i, y \rangle$. Applying the chain rule gives 
    \[ \partial_{y_j} \log V_{C,u'} \, = \, \sum_{i \in {\cal F}_C} \partial_{x_i} \log V_{C,u'} \cdot \frac{{\rm d}x_i}{{\rm d}y_j}.\]
    It follows that the likelihood equations for the Wachspress model $M_{\mathbb{C}}(Q_{b_0})$ are equivalent to
    \[ W_C^T\cdot \left ( \frac{u_0' \partial_{x_i} \alpha_C}{\alpha_C} - \frac{u_i'}{x_i} \right )_{i \in {\cal F}_C}  = \, 0 \quad \text{and} \quad Ax \, = \, b_0. \]
    Here $Ax = b_0$ is $b = b_0$ in $x$-coordinates, and $W_C$ is the submatrix of the matrix $W$ of facet normals whose rows are indexed by ${\cal F}_C$. The column span of $W_C$ equals that of $B_C$ by construction, so these are precisely the equations for $\mathscr{X}_{C,u'}^\circ \cap \{Ax = b_0\}$.
\end{proof}
Our next statement uses the following definition. An isolated solution $x$ to the $n$~equations 
\begin{equation} \label{eq:ueqs}
B_C^T \cdot \left ( \frac{\tilde{u}_0 \partial_{x_i} \alpha_C}{\alpha_C} - \frac{\tilde{u}_i}{x_i} \right )_{i \in {\cal F}_C}  = \, 0 \quad \text{and} \quad Ax = b_0\end{equation}
for fixed $\tilde{u} \in \mathbb{C}^{n_C+1}$ is \emph{regular} if the rank of the Jacobian matrix at $x$ is $n$.
\begin{proposition} \label{prop:Iub}
    Let $b_0 \in C \cap {\rm pos}(A)$ for some cell $C\in {\cal C}_A$. There is a dense open subset $U \subset \mathbb{C}^{n_C + 1}$ such that, for $\tilde{u} \in U$, the set $I(\tilde{u},b_0) := \mathscr{X}_{C,\tilde{u}}^\circ \cap \{ A x = b_0 \}$ consists of ${\rm MLdeg}(M_{\mathbb{C}}(Q_{b_0}))$ regular points. Moreover, the number of regular isolated points in $I(\tilde{u},b_0)$ for any $\tilde{u}$ is at most ${\rm MLdeg}(M_{\mathbb{C}}(Q_{b_0}))$. 
\end{proposition}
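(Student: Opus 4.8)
The plan is to transport the system \eqref{eq:ueqs}, restricted to the fiber $\{Ax=b_0\}$, into the critical‑point equations of a \emph{master function} on a hyperplane–arrangement–type complement, and then invoke Huh's theorem on maximum likelihood degrees of very affine varieties together with Corollary~\ref{cor:eulerchar}. Concretely, I would first redo the computation from the proof of Theorem~\ref{thm:MLEvsOPT}, but now for arbitrary weights. Under the coordinate change \eqref{eq:coordchange} the constraint $Ax=b_0$ becomes $x_i=\ell_i(y):=c_i(b_0)+\langle w_i,y\rangle$ for $i\in{\cal F}_C$, and $\alpha_C(x)$ becomes the adjoint polynomial $\alpha_{Q_{b_0}}(y)$. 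The chain rule then shows, exactly as in the proof of Theorem~\ref{thm:MLEvsOPT}, that $x\mapsto y=B^Tx$ identifies $I(\tilde u,b_0)$ with the critical locus on ${\cal U}:=\mathbb{C}^{n-d}\setminus D(Q_{b_0})$ of
\[ F_{\tilde u}(y)\,=\,\tilde u_0\,\log\alpha_{Q_{b_0}}(y)\,-\,\sum_{i\in{\cal F}_C}\tilde u_i\,\log\ell_i(y),\qquad \exp F_{\tilde u}(y)\,=\,\frac{\alpha_{Q_{b_0}}(y)^{\tilde u_0}}{\prod_{i\in{\cal F}_C}\ell_i(y)^{\tilde u_i}}. \]
The point of allowing every $\tilde u\in\mathbb{C}^{n_C+1}$, rather than only the vectors arising from Wachspress data through \eqref{eq:uprime}, is that the linear map \eqref{eq:uprime} need not be surjective (for instance it is not when $Q_{b_0}$ is a simplex), so Theorem~\ref{thm:MLEvsOPT} cannot be quoted directly.

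Next I would show that ${\cal U}$ is a smooth very affine variety via precisely these $n_C+1$ functions. Since $Q_{b_0}$ is bounded and full‑dimensional, the linear parts of the $\ell_i$, $i\in{\cal F}_C$, span $(\mathbb{R}^{n-d})^\vee$, so $y\mapsto(\ell_i(y))_{i\in{\cal F}_C}$ maps $\mathbb{C}^{n-d}$ isomorphically onto an affine subspace of $\mathbb{C}^{n_C}$; restricting to the torus exhibits $\mathbb{C}^{n-d}\setminus\bigcup_i\{\ell_i=0\}$ as a closed subvariety of $(\mathbb{C}^*)^{n_C}$, and adjoining the coordinate $\alpha_{Q_{b_0}}$ while deleting its zero locus realizes ${\cal U}$ as the graph of this regular function, hence as a closed subvariety of $(\mathbb{C}^*)^{n_C+1}$. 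Thus $\exp F_{\tilde u}$ is a Laurent monomial in the chosen torus coordinates whose exponent vector runs over all of $\mathbb{C}^{n_C+1}$ as $\tilde u$ does. By \cite{huh2014likelihood} there is then a dense open $U\subset\mathbb{C}^{n_C+1}$ such that for $\tilde u\in U$ the function $F_{\tilde u}$ has exactly $|\chi({\cal U})|$ critical points on ${\cal U}$, all nondegenerate (by generic smoothness of the likelihood correspondence of ${\cal U}$). By Corollary~\ref{cor:eulerchar}, $|\chi({\cal U})|={\rm MLdeg}(M_{\mathbb C}(Q_{b_0}))$. Transporting back along $x\leftrightarrow y$, and recording that nondegeneracy of $\operatorname{Hess}F_{\tilde u}$ is equivalent, through the linear change of coordinates, to nonsingularity of the bordered Hessian of $\log V_{C,\tilde u}$ on $\{Ax=b_0\}$ — i.e.\ to the rank‑$n$ condition on the Jacobian of \eqref{eq:ueqs} — gives the first assertion of the proposition.

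For the ``moreover'' part I would run a conservation‑of‑number argument. Intersect the incidence variety $\mathscr{X}_C$ of Proposition~\ref{prop:puredimd} with $\{Ax=b_0\}\times\mathbb{C}^{n_C+1}$ and let $\mathscr{Z}'$ be the union of the irreducible components of this intersection that dominate $\mathbb{C}^{n_C+1}$ and are not contained in $\{\alpha_C\prod_{i\in{\cal F}_C}x_i=0\}$. Since $\mathscr{X}_C$ is irreducible of dimension $n_C+1+d$, each such component has dimension $n_C+1$ and is generically finite over $\mathbb{C}^{n_C+1}$; moreover, for generic $\tilde u$ the fiber of $\mathscr Z'$ is contained in $\mathscr X_{C,\tilde u}^\circ\cap\{Ax=b_0\}=I(\tilde u,b_0)$ and (by the previous paragraph) already accounts for all of it, so the combined degree of $\mathscr{Z}'$ over $\mathbb{C}^{n_C+1}$ equals ${\rm MLdeg}(M_{\mathbb C}(Q_{b_0}))$. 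If now $x_0\in I(\tilde u_0,b_0)$ is a regular isolated point, the Implicit Function Theorem applied to \eqref{eq:ueqs} produces a local analytic branch through $(x_0,\tilde u_0)$ which is a graph over a neighbourhood of $\tilde u_0$ and lies in $\mathscr X_C^\circ$; hence $x_0$ lies on a component of $\mathscr{Z}'$, and \cite[Theorem~A.14.1]{sommese2005numerical} bounds the number of such points in the fiber over $\tilde u_0$ by that degree ${\rm MLdeg}(M_{\mathbb C}(Q_{b_0}))$.

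The main obstacle is the middle step: one must verify carefully that ${\cal U}=\mathbb{C}^{n-d}\setminus D(Q_{b_0})$ is very affine \emph{through the $n_C+1$ functions $\alpha_{Q_{b_0}},\ell_1,\dots,\ell_{n_C}$}, so that Huh's theorem may be applied with the full $(n_C+1)$‑dimensional weight space and not merely with the (possibly proper) image of \eqref{eq:uprime}. The remaining work — the Hessian/Jacobian dictionary and the identification of the relevant components of the incidence in the last step — is routine bookkeeping.
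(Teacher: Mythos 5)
Your argument follows essentially the same route as the paper's proof: transport via Theorem~\ref{thm:MLEvsOPT} to critical points of the master function $\alpha_{Q_{b_0}}^{\tilde u_0}/\prod_{i}\ell_i^{\tilde u_i}$ on $\mathbb{C}^{n-d}\setminus D(Q_{b_0})$, apply Huh's very-affine critical-point theorem (the paper cites \cite[Theorem~1]{huh2013maximum} for this, rather than \cite{huh2014likelihood}) together with Corollary~\ref{cor:eulerchar}, and conclude the ``moreover'' part by conservation of number, for which the paper invokes the parameter continuation theorem \cite[Theorem~7.1.1]{sommese2005numerical}. Your explicit verification that $\mathbb{C}^{n-d}\setminus D(Q_{b_0})$ is very affine through the $n_C+1$ functions $(\alpha_{Q_{b_0}},\ell_1,\dots,\ell_{n_C})$, and your observation that the linear map \eqref{eq:uprime} need not surject onto $\mathbb{C}^{n_C+1}$, make a hypothesis the paper leaves implicit explicit but do not change the approach.
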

\begin{proof}
The data points $u = (u_v)_{v \in {\cal V}(Q_{b_0})}$ for the Wachspress model $M_{\mathbb{C}}(Q_{b_0})$ parametrize a linear subspace $H$ of $\mathbb{C}^{n_C +1}$ via \eqref{eq:uprime}. 
    By Theorem \ref{thm:MLEvsOPT} and the definition of the ML degree, the number of points in $I(u',b_0)$ is ${\rm MLdeg}(M_{\mathbb{C}}(Q_{b_0}))$ for generic $u' \in H$. By Corollary \ref{cor:eulerchar}, this number equals the signed Euler characteristic of $\mathbb{C}^m \setminus D(Q_{b_0})$. By \cite[Theorem 1]{huh2013maximum}, that Euler characteristic is the number of regular complex critical points of \[ V_{C,\tilde{u}} \left ( \begin{pmatrix}
        A \\ B^T
    \end{pmatrix}^{-1} \begin{pmatrix}
        b_0 \\ y
    \end{pmatrix}\right ) \, = \, \frac{\alpha_{Q_{b_0}}(y)^{\tilde{u}_0}}{\prod_{i \in {\cal F}_C} (c_i(b_0) + \langle w_i , y \rangle)^{\tilde{u}_i}}. \] 
    for generic $\tilde{u} \in \mathbb{C}^{n_C+1}$. The final statement about the upper bound follows from the fact that the generic number of regular isolated solutions to the system of equations \eqref{eq:ueqs}
    equals the maximal number of regular isolated solutions, see for instance \cite[Theorem 7.1.1]{sommese2005numerical}.
\end{proof}

\begin{corollary} \label{cor:degXCu}
    For any cell $C \in {\cal C}_A$ and generic $\tilde{u} \in \mathbb{C}^{n_C+1}$ the degree of the variety $\mathscr{X}_{C,\tilde{u}} \subset \mathbb{C}^n$ is at least  ${\rm MLdeg}(M_{\mathbb{C}}(Q_{b_0}))$, where $b_0$ is a generic point in ${\rm int}(C)$. 
\end{corollary}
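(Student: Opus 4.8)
The plan is to bound $\deg\mathscr{X}_{C,\tilde u}$ from below by the number of points cut out on it by the \emph{special} codimension-$d$ affine subspace $\{Ax=b_0\}$, and then to identify that number with the maximum likelihood degree via Proposition \ref{prop:Iub}. Write $M^\ast={\rm MLdeg}(M_{\mathbb{C}}(Q_{b_0}))$ for generic $b_0\in{\rm int}(C)$; as recalled in the Introduction this is well defined, and by Theorem \ref{thm:MLEvsOPT} it is the constant generic value on a dense open subset of the cell of a fixed fibre cardinality. Write $\delta^\ast$ for the generic value of $\deg\mathscr{X}_{C,\tilde u}$, attained on a dense open $U_\delta\subset\mathbb{C}^{n_C+1}$: by Proposition \ref{prop:puredimd} the family $\mathscr{X}_C\to\mathbb{C}^{n_C+1}$ has fibres pure of dimension $d$ generically, it is flat over a dense open of the base, and the degree is locally constant there. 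It suffices to prove $\delta^\ast\ge M^\ast$.

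The one elementary ingredient I would isolate first is: if $Z\subset\mathbb{C}^n$ is pure of dimension $d$ and $L\subset\mathbb{C}^n$ is an affine-linear subspace of codimension $d$ with $Z\cap L$ finite, then $\#(Z\cap L)\le\deg Z$. This follows by writing $L$ as an intersection of $d$ hyperplanes and cutting the irreducible components of $Z$ one hyperplane at a time: at each step no component of the intermediate variety can lie in the next hyperplane (otherwise $Z\cap L$ would be positive-dimensional), so every intersection is proper and Bézout's theorem \cite[Chapter I, Theorem 7.7]{hartshorne2013algebraic} applied inductively keeps the total degree $\le\deg Z$. Since $A$ has rank $d$, the fibre $\{Ax=b_0\}$ is such an $L$, so this applies with $Z=\mathscr{X}_{C,\tilde u}$ whenever $\mathscr{X}_{C,\tilde u}\cap\{Ax=b_0\}$ is finite.

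It remains to secure finiteness of the \emph{full} intersection for a compatible choice of $(\tilde u,b_0)$, alongside Proposition \ref{prop:Iub}. For fixed $\tilde u\in U_\delta$ the boundary $\mathscr{X}_{C,\tilde u}\setminus\mathscr{X}_{C,\tilde u}^\circ$ has dimension $\le d-1$, hence $A$ maps it into a proper subvariety of $\mathbb{C}^d$; for $b_0$ outside that subvariety, $\mathscr{X}_{C,\tilde u}\cap\{Ax=b_0\}=\mathscr{X}_{C,\tilde u}^\circ\cap\{Ax=b_0\}$, which is finite. Intersecting this ``generic $b_0$ given $\tilde u$'' condition with the ``generic $\tilde u$ given $b_0$'' condition of Proposition \ref{prop:Iub}, with the $M^\ast$-genericity of $b_0$, and with $\tilde u\in U_\delta$, produces a dense constructible set of pairs $(\tilde u,b_0)$; because ${\rm int}(C)$ is a full-dimensional real cone, hence Zariski dense in $\mathbb{C}^d$, this set contains a pair with $b_0\in{\rm int}(C)$. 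For that pair,
\[ M^\ast \;=\; \#\bigl(\mathscr{X}_{C,\tilde u}^\circ\cap\{Ax=b_0\}\bigr) \;=\; \#\bigl(\mathscr{X}_{C,\tilde u}\cap\{Ax=b_0\}\bigr) \;\le\; \deg\mathscr{X}_{C,\tilde u} \;=\; \delta^\ast, \]
and the corollary follows, since $\deg\mathscr{X}_{C,\tilde u}=\delta^\ast\ge M^\ast$ for every $\tilde u\in U_\delta$.

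The main obstacle is the quantifier bookkeeping in the last paragraph: Proposition \ref{prop:Iub} fixes $b_0$ first and varies $\tilde u$, the finiteness argument fixes $\tilde u$ first and varies $b_0$, and the statement to be proved concerns generic $\tilde u$ only. Reconciling ``for generic $b_0$, for generic $\tilde u$'' with ``for generic $\tilde u$, for generic $b_0$'' and then passing to a slice over a $b_0$ inside the real cone is a Fubini-type argument for constructible sets (using that a dense constructible subset of an irreducible variety contains a dense open, and that an open subset of $\mathbb{R}^d$ is Zariski dense in $\mathbb{C}^d$). It is routine but is where care is needed. A minor additional point is the justification that $\deg\mathscr{X}_{C,\tilde u}$ is constant on a dense open of $\mathbb{C}^{n_C+1}$, for which one invokes generic flatness of $\mathscr{X}_C\to\mathbb{C}^{n_C+1}$ together with Proposition \ref{prop:puredimd}.
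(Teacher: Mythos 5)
Your proof follows the same route as the paper's: pure-dimensionality from Proposition \ref{prop:puredimd}, a B\'ezout-type observation that the degree dominates the number of regular isolated points in the codimension-$d$ linear section $\{Ax=b_0\}$, and Proposition \ref{prop:Iub} to identify that count with the ML degree. The additional machinery you introduce (generic flatness to make the degree constant, and the Fubini-style reconciliation of quantifiers) is not needed for the statement as written --- one simply fixes a generic $b_0\in\mathrm{int}(C)$ and then takes $\tilde u$ in the intersection of the dense open sets from Propositions \ref{prop:puredimd} and \ref{prop:Iub}, allowed to depend on $b_0$ --- and the parenthetical in your B\'ezout sketch is slightly off (an intermediate component lying in the next hyperplane need not make $Z\cap L$ positive-dimensional, since it could miss $L$ entirely, though such components contribute nothing and can be dropped), but neither point affects the correctness of the argument.
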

\begin{proof}
    By Proposition \ref{prop:puredimd}, $\mathscr{X}_{C,\tilde{u}}$ is pure dimensional of dimension $d$ for generic $u$. Its degree is the maximal number of regular intersection points with a linear space of codimension $d$. This is at least the cardinality of $I(\tilde{u},b_0)$. The statement is a consequence of Proposition~\ref{prop:Iub}. 
\end{proof}

Though the Santal\'o point of $Q_{b_0}$ is one of the regular intersection points in $I({\bf 1},b_0) = \mathscr{X}^\circ_{C,\bf{1}} \cap \{Ax = b_0\}$ (Theorem \ref{thm:santalointersect}), the usefulness of the results in this section for our original problem may seem somewhat mysterious. It will become clear in Section \ref{sec:6} that Proposition \ref{prop:Iub} is crucial for our homotopy continuation based algorithm for computing Santal\'o points. 

\begin{remark} \label{rem:optvsmle}
    Dual volume minimization is not the only convex optimization problem on $P_b$ that has the interpretation of a maximum likelihood estimation problem. Other commonly used objective functions lead to maximum likelihood estimation for different models. We briefly discuss the cases $V(x) = - \sum_{i=1}^n \log x_i$ (log-barrier) and $V(x) = \sum_{i=1}^n x_i \log x_i - x_i$ (entropic regularization) mentioned in the Introduction. In each case, there are $N+1 = n$ states. For ease of exposition, we make some additional assumptions on the matrix $A$. 
    
    First, for $V(x) = - \sum_{i=1}^n \log x_i$, assume that the entries of each column of $A$ sum to the same number $c$. 
    The statistical model $M$ in this context is the linear model obtained by intersecting the row span $M_{\mathbb{C}}$ of $A$ with $\Delta_{n-1}$. It is parametrized by $p_i(y) = (y^Ta_i)/(y^TA {\bf 1})$, where $a_i$ is the $i$-th column of $A$ and ${\bf 1} \in \mathbb{R}^n$ is the all-ones vector. One checks that the maximum likelihood estimate for the data $u = (1, \ldots, 1)$ is the unique positive minimizer of the log-barrier function $V(x)$ on the affine-linear space $\{Ax = b\}$, where $b = c^{-1} \,  A \, {\bf 1}$.

    For $V(x) = \sum_{i=1}^n x_i \log x_i - x_i$, the model comes from a toric variety. We assume that the first row of $A$ is the all-ones vector ${\bf 1}$ and write $a_i \in \mathbb{R}^d_{\geq 0}$ for the $i$-th column. These columns define a monomial map, whose image is $M_{\mathbb{C}}$. Concretely, let $f(y) = y^{a_1} + y^{a_2} + \cdots + y^{a_n}$ and consider the rational parametrization functions $p_i(y) = y^{a_i}/f(x)$, parametrizing $M_{\mathbb{C}}$. For any data vector $u = (u_1, \ldots, u_n) \in \mathbb{N}^n$, let $\bar{u} = (\sum_{i=1}^n u_i)^{-1}\cdot u$ be the \emph{empirical distribution}. As a consequence of Birch's theorem \cite[Proposition 2.1.5]{drton2008lectures}, the maximum likelihood estimate for the model $M$ is the unique positive minimizer of the entropy $V(x)$ on $\{Ax = A \bar{u}\}$.       
\end{remark}

There is no explicit formula yet for the maximum likelihood degree of the Wachspress model $M_{\mathbb{C}}(Q)$. We end the section with conjectures for polygons in the plane. We represent a \emph{generic $n$-gon} by a fiber $P_b$ of $A : \mathbb{R}^n_{\geq 0} \rightarrow {\rm cone}(A)$, where $A \in \mathbb{R}^{(n-2) \times n}_{\geq 0}$ is generic among those matrices for which there is a cell in ${\cal C}_{A}$ whose fibers are $n$-gons. Concretely, let
\[C_{\max} \, = \, {\rm cone}(A_{[n] \setminus \{1\}}) \cap {\rm cone}(A_{[n] \setminus \{2\}})  \cap \cdots  \cap {\rm cone}(A_{[n] \setminus \{n\}}) \, \neq \, \emptyset \quad \text{and} \quad \dim(C_{\max}) = n-2. \]
This uses the notation introduced before Definition \ref{def:chambercomplex}. In general, $C_{\max}$ is a union of cells in ${\cal C}_A$. We pick $b \in {\rm int}(C)$ for any cell $C \subset C_{\max}$.
\begin{conjecture} \label{conj:MLdeg}
    Let $Q = B^T \cdot P_b$ be a generic $n$-gon. The maximum likelihood degree of the corresponding Wachspress model is ${\rm MLdeg}(M_{\mathbb{C}}(Q)) = (n-1)(n-2) + (n-3)(n-5) - 1$.
\end{conjecture}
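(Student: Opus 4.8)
The plan is to compute the ML degree via Corollary \ref{cor:eulerchar} as the signed Euler characteristic of $\mathbb{C}^2 \setminus D(Q)$, where $D(Q)$ is the arrangement consisting of the $n$ facet lines of the generic $n$-gon $Q$ together with the adjoint curve $\alpha_Q = 0$. Since $Q$ is a generic $n$-gon, the facet lines form a generic arrangement of $n$ lines in $\mathbb{C}^2$, and by \cite[Theorem 6]{kohn2020projective} the adjoint is the unique curve of minimal degree $n-2$ passing through the $\binom{n}{2}-n = \tfrac{n(n-3)}{2}$ residual points (the pairwise intersections of the facet lines that are not vertices of $Q$). The first step is therefore to record the combinatorics of the complement: $\chi(\mathbb{C}^2 \setminus \mathcal{A}_n)$ for a generic line arrangement $\mathcal{A}_n$ of $n$ lines is $\binom{n-1}{2}$, and then one must account for the adjoint curve $D_0 = \{\alpha_Q = 0\}$ of degree $n-2$ and how it meets the existing lines.

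The second step is to run an inclusion–exclusion / additivity argument for the Euler characteristic: writing $D(Q) = \mathcal{A}_n \cup D_0$, one has $\chi(\mathbb{C}^2 \setminus D(Q)) = \chi(\mathbb{C}^2 \setminus \mathcal{A}_n) - \chi(D_0 \setminus \mathcal{A}_n)$, so the work reduces to understanding $D_0$ as an abstract curve and its intersection with the $n$ lines. For a generic $n$-gon I expect the adjoint curve to be smooth (for $n$ small it is a conic, then a cubic, etc.) of degree $n-2$, hence of genus $\binom{n-3}{2}$ and with $\chi(D_0) = 2 - 2\binom{n-3}{2}$; it passes through the $\tfrac{n(n-3)}{2}$ residual points, which lie on the lines, and by Bézout meets each line in $n-2$ points, so $D_0 \cap \mathcal{A}_n$ has a predictable cardinality once one checks genericity (no unexpected tangencies, the residual points are simple on $D_0$, and $D_0$ passes through no vertex of $Q$). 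Plugging $\chi(D_0 \setminus \mathcal{A}_n) = \chi(D_0) - |D_0 \cap \mathcal{A}_n|$ into the additivity formula and simplifying should produce $(n-1)(n-2) + (n-3)(n-5) - 1$ up to sign; the appearance of the term $(n-3)(n-5)$, which is essentially $(n-3)(n-5) = ((n-4)^2 - 1)$, strongly suggests that the genus contribution $2\binom{n-3}{2} = (n-3)(n-4)$ and a line-count contribution combine in exactly this way, so the arithmetic should close.

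The main obstacle is the genericity analysis of the adjoint curve rather than the Euler-characteristic bookkeeping. One must verify that, for a generic $n$-gon, (i) $\alpha_Q$ is irreducible and smooth of the expected degree $n-2$, (ii) it meets each facet line transversally in $n-2$ distinct points, none of which is a vertex of $Q$ or a residual point of a *different* pair — so that the incidence pattern between $D_0$ and $\mathcal{A}_n$ is the generic one — and (iii) no three of the constituent curves of $D(Q)$ pass through a common point except at the residual points where this is forced. Establishing smoothness of the adjoint for all $n$ is the delicate point: it is known classically for the adjoint of a polygon but one should cite or adapt the relevant statement (e.g.\ from \cite{kohn2020projective} or \cite{warren1996barycentric}) and confirm it survives a generic perturbation of the facet data, i.e.\ of $b \in {\rm int}(C)$ with $C \subset C_{\max}$. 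Once these transversality facts are in hand, the Euler-characteristic computation is a routine additivity argument, and Corollary \ref{cor:eulerchar} converts it into the claimed ML degree formula.
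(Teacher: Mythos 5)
Your strategy is the same as the paper's sketch: apply Corollary~\ref{cor:eulerchar} to reduce to the Euler characteristic $\chi(\mathbb{C}^2 \setminus D(Q))$, then use excision over the line arrangement and the adjoint curve. However, the proposal contains a degree error that propagates and prevents the arithmetic from closing. The adjoint curve $\{\alpha_Q = 0\}$ of a generic $n$-gon has degree $n-3$, not $n-2$: for a polygon in $\mathbb{R}^2$ with $n$ facets, the adjoint is the unique minimal-degree curve through the $n(n-3)/2$ residual points, and that degree is $n - m - 1 = n-3$ by \cite[Theorem~6]{kohn2020projective}. This is consistent with the pentagon of Example~\ref{ex:pentagon1}, whose adjoint is a \emph{conic} (Figure~\ref{fig:pentagon1}), not a cubic. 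Consequently the genus should be $\binom{n-4}{2}$, not $\binom{n-3}{2}$, and B\'ezout gives $n-3$ intersections with each facet line, not $n-2$.

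This degree matters for the bookkeeping in two ways. First, each facet line contains exactly $n-3$ residual points, so a degree-$(n-3)$ adjoint meets each line \emph{exactly} at its residual points (generically), and $\{\alpha_Q = 0\} \cap \{\prod_i l_i = 0\}$ is precisely the $n(n-3)/2$ residual points. With your degree $n-2$ there would be one surplus intersection per line, i.e.\ $n$ extra points that your write-up does not track. Second, you set $\chi(D_0) = 2 - 2g$, which is the Euler characteristic of the smooth \emph{projective} curve; for the affine curve one must subtract the points at infinity (generically $n-3$ of them). If one carries out your computation with degree $n-2$ and the residual-point count $n(n-3)/2$ plus $n$ surplus points, one gets $\chi(\mathbb{C}^2 \setminus D(Q)) = 2n^2 - 8n + 9$, which for $n=5$ gives $19$ rather than the required $11$; with degree $n-3$ and the points-at-infinity correction the sum is $2n^2 - 11n + 16 = (n-1)(n-2) + (n-3)(n-5) - 1$ as claimed. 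Like the paper, you correctly flag that the remaining gap is a genericity statement about the adjoint curve; the paper phrases it as the conjecture that $\alpha_Q$ is generic in the sense of \cite[Theorem~3]{huh2013maximum} with Newton polytope that of $(1 + y_1 + y_2)^{n-3}$, which subsumes your transversality conditions (i)--(iii).
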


\begin{proof}[Sketch of proof]
    By Corollary \ref{cor:eulerchar}, we have ${\rm MLdeg}(M_{\mathbb{C}}(Q)) = \chi(\mathbb{C}^2 \setminus D(Q))$, where $D_{Q}$ is the curve $\{\alpha_Q(y) \prod_{i=1}^n l_i(y) = 0\}$. Here we write $l_i(y) = c_i + \langle w_i,y \rangle$ for the equations of the lines defining the edges of $Q$. The excision property of the Euler characteristic gives
    \[ \chi(\mathbb{C}^2 \setminus D(Q)) \, = \, \chi\big (\mathbb{C}^2 \setminus \big \{ \prod_{i=1}^n l_i(y) = 0 \big \}\big ) \, - \, \chi \big (\{\alpha_Q(y) = 0 \} \setminus \big \{ \prod_{i=1}^n l_i(y) = 0 \big \} \big ).  \]
    Since the line arrangement of $l_1, \ldots, l_n$ is generic, the first term is $\binom{n-1}{2}$ \cite[Equation (8)]{hosten2005solving}. On the second term, we use excision once more: 
    \[ \chi \big (\{\alpha_Q(y) = 0 \} \setminus \big \{ \prod_{i=1}^n l_i(y) = 0 \big \} \big ) \, =\, \chi(\{\alpha_Q(y) = 0\}) - \chi \big( \{ \alpha_Q = 0 \} \cap \big \{ \prod_{i=1}^n l_i(y) = 0 \big \} \big).\]
    Here the second term is $-\binom{n-1}{2}-1$, the number of \emph{residual points} of $Q$ \cite[Section 2.1]{kohn2021adjoints}. What's missing is the Euler characteristic of the affine curve $\chi(\{\alpha_Q(y) =0\})$. We conjecture that, for generic $Q$, this curve is generic in the sense of \cite[Theorem 3]{huh2013maximum}, with Newton polytope equal to that of $(1+ y_1 + y_2)^{n-3}$. That would imply that its Euler characteristic equals  $-(n-3)^2 + 2(n-3)$. Summing all this up gives the formula in the conjecture.
\end{proof}

In the spirit of Corollary \ref{cor:degXCu}, we can compare the number $(n-1)(n-2) + (n-3)(n-5) - 1$ to the degree of the variety $\mathscr{X}_{C,u}$, and hence that of ${\cal X}_C$ and $X_C$ (Conjecture \ref{conj:irred}). 

\begin{example} \label{ex:degreeVSMLdeg}
    For $n = 3, 4, \ldots, 11$ we generate a totally positive matrix $A \in (\mathbb{R})^{(n-2)\times n}_{\geq 0}$ (meaning that all $(n-2)$-minors are positive) and we pick a cell $C \subset C_{\max}$. Using the numerical homotopy continuation techniques discussed in the next section, we compute that 
    \begin{center}
    \begin{tabular}{c|ccccccccc}
         $n$ &  3 & 4 & 5 & 6 & 7 & 8 & 9 & 10 & 11 \\ \hline
        $(n-1)(n-2) + (n-3)(n-5) - 1$ & 1 & 4 & 11 & 22 & 37 & 56 & 79 & 106 & 137\\
         $\deg(\mathscr{X}_{C,u}) = \deg({\cal X}_C) = \deg (X_C) $ & 1 & 4 & 14 & 27& 44 & 65 & 90 & 119 & 152
    \end{tabular}
    \end{center}
    For instance, for $n= 5$, a generic linear space $\{ \tilde{A} x = \tilde{b}\}$ of dimension 2 intersects $\mathscr{X}_{C,u}$ in 14 points. By Proposition \ref{prop:Iub}, the \emph{special} linear space $\{ A x = b\}$ leads to only 11 points. Hence, the lower bound in \ref{cor:degXCu} may be strict. The table leads us to conjecture that for $n \geq 5$,
    \[ \deg(X_C) \, = \, (n-1)(n-2) + (n-3)(n-5) - 1 + 2(n-3) - 1. \]
    Code for computing these degrees is found at \url{https://mathrepo.mis.mpg.de/Santalo}.
\end{example}

\section{Computing Santal\'o points} \label{sec:6}
We discuss how to compute Santal\'o points numerically. We consider two different situations. First, the input is a polytope $Q \subset \mathbb{R}^m$, and the output is its Santal\'o point $y^*$ from \eqref{eq:ystar}. Our continuation algorithm exploits the likelihood geometry from Section \ref{sec:5}. The second scenario computes the Santal\'o point $x^*(b_1)$ from $x^*(b_0)$, assuming $b_1$ lies in the same cell $C \in {\cal C}_A$ as $b_0$. The strategy here is to track a real path on the Santal\'o patch $S_C$. These algorithms are implemented in Julia (v1.9.1) using \texttt{Oscar.jl} (v0.14.0) \cite{OSCAR} and \texttt{HomotopyContinuation.jl} (v2.9.3) \cite{breiding2018homotopycontinuation}. All code is available at \url{https://mathrepo.mis.mpg.de/Santalo}.

The computational paradigm behind our algorithms is that of \emph{homotopy continuation}. We briefly recall the main idea and refer to the standard textbook  \cite{sommese2005numerical} for more details. Let $F: (\mathbb{C}^n \setminus D) \times \mathbb{C}^m \rightarrow \mathbb{C}^n$ be a map whose coordinates are rational functions in $x = (x_1, \ldots, x_n)$, depending polynomially on $m$ parameters $q = (q_1, \ldots, q_m)$. We assume that the denominators of these functions do not depend on $q$, and their vanishing locus is contained in the hypersurface $D \subset \mathbb{C}^n$, so that $F$ is a regular map. We consider the incidence variety 
\[ Y \, = \, F^{-1}(0) \, = \, \{ (x,q) \in (\mathbb{C}^n \setminus D) \times \mathbb{C}^m \, : \, F(x,q) = 0 \}. \]
A fiber of the natural projection $\pi_q: Y \rightarrow \mathbb{C}^m$ is denoted by $Y_{q_0} = \pi_q^{-1}(q_0)$. It consists of all solutions to the $n$ equations in $n$ variables $F(x,q_0) = 0$. A solution $(x_0,q_0)$ in $Y_{q_0}$ is called \emph{isolated} and \emph{regular} if the Jacobian of $F$ (with respect to $x$) evaluated at $(x_0,q_0)$ is an invertible $n\times n$-matrix. Typically, one has computed all isolated regular solutions in $Y_{q_0}$ and is interested in computing those in $Y_{q_1}$, for some parameters $q_0 \neq q_1 \in \mathbb{C}^m$. Homotopy continuation rests on the \emph{parameter continuation theorem} \cite[Theorem 7.1.4]{sommese2005numerical}. First, this states that the number of isolated regular solutions in $Y_{q_0}$ is constant for $q_0 \in \mathbb{C}^m \setminus \nabla$, where $\nabla \subset \mathbb{C}^m$ is a proper subvariety. Second, let $\gamma: [0,1] \rightarrow \mathbb{C}^m$ be a continuous path such that $\gamma(0) = q_0$, $\gamma(1) = q_1$ and $\gamma([0,1)) \cap \nabla = \emptyset$. Since $q_0 \notin \nabla$, each isolated regular solution $(x_0,q_0) \in Y_{q_0}$ defines a unique smooth solution path $(t, x(t))$ satisfying 
\[F(x(t),\gamma(t)) \, = \, 0, \quad t \in [0,1), \quad x(0) = x_0 .\]
Moreover, the limits of all these solution paths as $t \rightarrow 1$ contain all isolated regular solutions in $Y_{q_1}$. Numerical path trackers, such as that implemented in \texttt{HomotopyContinuation.jl}, track these solution paths numerically for $t$ going from $0$ to $1$. For obvious reasons, the system of equations $F(x,q_0)=0$ is called the \emph{start system}, and $F(x,q_1)=0$ is the \emph{target~system}. 

A useful algorithm for finding all isolated regular solutions in $Y_{q_0}$, i.e., the solutions to the start system, is itself based on homotopy continuation. It uses \emph{monodromy loops} \cite{duff2019solving}. The method needs the assumption that one solution $(x_0,q_0) \in Y_{q_0}$ is known. One chooses $\gamma$ to be a closed path, i.e., $\gamma(0) = \gamma(1) = q_0$. If this path encircles a ramification point of the branched cover $\pi_q: Y \rightarrow \mathbb{C}^m$, then the corresponding solution path $(t,x(t))$ may provide a new regular isolated solution in $Y_{q_0}$: $x(1) \neq x(0)$. If $Y$ is irreducible, then all isolated regular solutions can be found by repeating this process \cite[Remark 2.2]{duff2019solving}. To know when enough loops are tracked, it is very useful to compute the maximal number of solutions from a theoretical argument. This is one of the purposes of Proposition \ref{prop:Iub} and Conjecture \ref{conj:MLdeg}. The monodromy method, and in particular its implementation in the command \texttt{monodromy\_solve} in \texttt{HomotopyContinuation.jl}, is very efficient and reliable in practice.

\subsection{From likelihood equations to dual volume}
Let $Q \subset \mathbb{R}^m$ be a full-dimensional simple polytope with minimal facet representation 
\[ Q \, = \, \{ y \in \mathbb{R}^m \, : \, W \, y + c \geq 0 \}, \quad \text{for } W \in \mathbb{R}^{n \times m}, \, c \in \mathbb{R}^n.\] 
Let $A \in \mathbb{R}^{d \times n}$ be a cokernel matrix of $W$ ($A \cdot W = 0$). Here $d = n-m$, and $A$ can be chosen so that its entries are nonnegative. Setting $x = W \, y + c$, we see that $Q$ is a projection of $P_b = \{x \in \mathbb{R}^n_{\geq 0} \, : \, Ax = b\}$, with $b = Ac$. More precisely, $Q$ is given by $W^\dag \cdot (P_b-c)$, where $W^\dag \in \mathbb{R}^{m \times n}$ is the pseudo-inverse of $W$. Though we assumed nonnegative entries to guarantee compact fibers of $A: \mathbb{R}^n_{\geq 0} \rightarrow {\rm pos}(A)$, it is not necessary to find a nonnegative representation for computing the Santal\'o point. We think of the likelihood equations \eqref{eq:ueqs} as a system of equations with variables $x_1, \ldots, x_n$ and parameters $q = (u_0, (u_i)_{i \in {\cal F}_C})$: 
\begin{equation}
F(x;q) \, = \, \left( \begin{matrix} B_C^T \cdot \left ( \frac{u_0 \partial_{x_i} \alpha_C}{\alpha_C} - \frac{u_i}{x_i} \right )_{i \in {\cal F}_C} \\
Ax - b \end{matrix} \right ) \, = \, 0.\end{equation}
In order to solve this for generic parameters $q_0 \in \mathbb{C}^{n_C+1}$ using monodromy loops, we need to compute one regular solution in $Y_{q_0}$. This is done as follows. Select a random point $x_0 \in \mathbb{C}^n$ so that $Ax_0 = b$ and solve the linear system $F(x_0;q)$ for $q$. We can pick any solution to these linear equations as the start parameters $q_0$. Since $Y = \mathscr{X}_{C}^\circ$ is irreducible, see Proposition \ref{prop:puredimd}, all other solutions to $F(x,q_0)$ can be found using monodromy loops. By Proposition \ref{prop:Iub}, the number of solutions is the maximum likelihood degree of the Wachspress model $M_{\mathbb{C}}(Q)$.

Once we have computed $Y_{q_0}$, we set $\gamma(t) = (1-t) \cdot q_0 + t \cdot \mathbf{1}$ and track the ${\rm MLdeg}(M_{\mathbb{C}}(Q))$-many solution paths for $t \in [0,1]$. Precisely one of the end points is positive. Indeed, the regular isolated solutions for $q_1 = \mathbf{1}$ are critical points of the logarithm of the dual volume function on $Q$. Among them, the Santal\'o point is the unique positive point, by convexity. 

\begin{example} \label{ex:6-1}
We illustrate the code on our running example using the data in \eqref{eq:Abpentagon}:
\begin{minted}{julia}
using Santalo # load the package
A = [1 1 1 1 1; 2 1 0 1 0; 1 2 0 0 1] 
B = transpose(1//18*[5 -4 2 -6 3; -4 5 2 3 -6])
b = 1//5*[5; 4; 4]
Q = compute_Q(A,b,B) # Q = B^T*Pb
ystar = get_santalo_point(Q) # Santalo point in y-coordinates
xstar = get_santalo_point(A,b) # Santalo point in x-coordinates
\end{minted}
The result $y^*$ is as reported in Example \ref{ex:pentagon1}, and $x^* \approx (0.197,0.197,0.188,0.210,0.210)$. 
\end{example}

\begin{example} \label{ex:permutahedron}
The user can also construct a polytope \texttt{Q} using the functionalities of \texttt{Oscar.jl} and use it as input for the function \texttt{get\_santalo\_point}. As a 3D example, we consider the permutahedron; a simple polytope with $f$-vector $(24,36,14)$. 
\begin{minted}{julia}
using Oscar # load the package Oscar to construct polytopes
Q = project_full(permutahedron(3))
ystar = get_santalo_point(Q) # output: (2.5, 2.5, 2.5)
\end{minted}
Here $Q$ is the convex hull of all points $(j,k,l)$, where $(i,j,k,l) \in S_4$ is a permutation of $(1,2,3,4)$. This  permutahedron is represented by the following values for $A$ and $b$:
\setcounter{MaxMatrixCols}{15}
\[ A \, = \, \begin{pmatrix} 
 1 & 0 & 0 & 1 & 0 & 0 & 0 & 0 & 0 & 0 & 0 & 0 & 0 & 0  \\ 
 1 & 1 & 0 & 0 & 1 & 1 & 0 & 0 & 0 & 0 & 0 & 0 & 0 & 0  \\ 
 0 & 0 & 1 & 0 & 0 & 1 & 0 & 0 & 0 & 0 & 0 & 0 & 0 & 0  \\ 
 0 & 0 & 1 & 1 & 0 & 0 & 1 & 0 & 0 & 0 & 0 & 0 & 0 & 0  \\ 
 1 & 0 & 0 & 0 & 0 & 0 & 0 & 1 & 0 & 1 & 0 & 0 & 0 & 0  \\ 
 0 & 0 & 1 & 0 & 0 & 0 & 0 & 0 & 1 & 1 & 0 & 0 & 0 & 0  \\ 
 0 & 1 & 0 & 0 & 0 & 0 & 0 & 0 & 0 & 1 & 0 & 0 & 0 & 0  \\ 
 0 & 1 & 0 & 1 & 0 & 0 & 0 & 0 & 0 & 0 & 1 & 0 & 0 & 0  \\ 
 1 & 0 & 0 & 0 & 0 & 1 & 0 & 0 & 0 & 0 & 0 & 1 & 0 & 0  \\ 
 0 & 1 & 0 & 0 & 0 & 1 & 0 & 0 & 0 & 0 & 0 & 0 & 1 & 0  \\ 
 0 & 0 & 1 & 1 & 0 & 0 & 0 & 0 & 0 & 1 & 0 & 0 & 0 & 1  \\ 
 \end{pmatrix}, \quad b \, = \, \begin{pmatrix} 
 3  \\ 
 7  \\ 
 4  \\ 
 5  \\ 
 5  \\ 
 5  \\ 
 3  \\ 
 5  \\ 
 5  \\ 
 5  \\ 
 7  \\ 
 \end{pmatrix}.\]
 We note that $b$ does not lie in the interior of a full dimensional cell of ${\cal C}_A$: the facet hyperplane arrangement of the permutahedron is not simple (see Proposition \ref{prop:simple}). Still, because $Q$ is a simple polytope, the adjoint polynomial $\alpha_C(x)$ can be computed using the formula in \eqref{eq:alphaCVC}. It has degree 11, and all its coefficients are equal: 
 \begin{equation*}
 \small
     \begin{matrix} x_{1}x_{2}x_{3}x_{4}x_{5}x_{6}x_{7}x_{8}x_{9}x_{12}x_{14} + x_{1}x_{2}x_{3}x_{4}x_{5}x_{6}x_{8}x_{9}x_{12}x_{13}x_{14} + x_{1}x_{2}x_{3}x_{4}x_{5}x_{6}x_{8}x_{10}x_{11}x_{12}x_{13} + \\
     x_{1}x_{2}x_{3}x_{4}x_{5}x_{8}x_{9}x_{11}x_{12}x_{13}x_{14} + x_{1}x_{2}x_{3}x_{4}x_{5}x_{8}x_{10}x_{11}x_{12}x_{13}x_{14} + x_{1}x_{2}x_{3}x_{4}x_{6}x_{7}x_{8}x_{9}x_{10}x_{11}x_{14} + \\
     x_{1}x_{2}x_{3}x_{4}x_{6}x_{7}x_{8}x_{9}x_{11}x_{12}x_{14} + x_{1}x_{2}x_{3}x_{5}x_{6}x_{7}x_{9}x_{10}x_{11}x_{13}x_{14} + x_{1}x_{2}x_{3}x_{5}x_{6}x_{7}x_{10}x_{11}x_{12}x_{13}x_{14} + \\
     x_{1}x_{2}x_{3}x_{5}x_{7}x_{8}x_{9}x_{11}x_{12}x_{13}x_{14} + x_{1}x_{2}x_{3}x_{5}x_{7}x_{8}x_{10}x_{11}x_{12}x_{13}x_{14} + x_{1}x_{2}x_{3}x_{6}x_{7}x_{8}x_{9}x_{10}x_{11}x_{13}x_{14} + \\
     x_{1}x_{2}x_{3}x_{6}x_{7}x_{8}x_{9}x_{11}x_{12}x_{13}x_{14} + x_{1}x_{2}x_{4}x_{5}x_{6}x_{7}x_{8}x_{9}x_{10}x_{11}x_{14} + x_{1}x_{3}x_{4}x_{5}x_{6}x_{7}x_{8}x_{10}x_{11}x_{12}x_{13} + \\
     x_{1}x_{3}x_{4}x_{5}x_{6}x_{7}x_{9}x_{10}x_{11}x_{13}x_{14} + x_{1}x_{3}x_{4}x_{5}x_{6}x_{7}x_{10}x_{11}x_{12}x_{13}x_{14} + x_{1}x_{4}x_{5}x_{6}x_{7}x_{8}x_{9}x_{10}x_{11}x_{13}x_{14} + \\
     x_{2}x_{3}x_{4}x_{5}x_{6}x_{7}x_{8}x_{9}x_{10}x_{12}x_{14} + x_{2}x_{3}x_{4}x_{5}x_{6}x_{8}x_{9}x_{10}x_{11}x_{12}x_{13} + x_{2}x_{3}x_{4}x_{5}x_{6}x_{8}x_{9}x_{10}x_{12}x_{13}x_{14} + \\
     x_{2}x_{4}x_{5}x_{6}x_{7}x_{8}x_{9}x_{10}x_{11}x_{12}x_{14} + x_{3}x_{4}x_{5}x_{6}x_{7}x_{8}x_{9}x_{10}x_{11}x_{12}x_{13} + x_{4}x_{5}x_{6}x_{7}x_{8}x_{9}x_{10}x_{11}x_{12}x_{13}x_{14}.
     \end{matrix}
 \end{equation*}
 This is found using \texttt{adjoint\_x(A,b)}, as in Example \ref{ex:sec3}. The command \texttt{get\_santalo\_point} computes the Santal\'o point by first solving the likelihood equations for random parameters:
\begin{minted}{julia}
A, b, W, c = free_representation(Q) 
solve_likelihood_startsystem(A,b)
\end{minted}
The first line computes the representations $P_b = \{x \geq 0, Ax = b\}$ and $Q = \{Wy + c \geq 0\}$. 
The result of line 2 shows that the ML degree of the Wachspress model of the permutahedron is 569.  Interestingly, we find that the Santal\'o point of the permutahedra of dimensions 2, 3, 4 and 5 is $A^\dag \cdot b$. That is, it is the closest point to the origin satisfying $Ax = b$. 
\end{example}

\subsection{Tracking paths on Santal\'o patches}
Suppose the Santal\'o point $x^*(b_0)$ of $P_{b_0}$ was computed for some $b_0 \in {\rm int}(C)$, where $C \in {\cal C}_A$ is a cell. We are interested in computing $x^*(b_1)$ for some $b_1 \in C$ contained in the same cell. Note that $b_1$ is not necessarily contained in the interior of $C$. In particular $P_{b_1}$ is not necessarily simple.  This time, the parametric equations depend only on $b$: 
\begin{equation}
F(x;b) \, = \, \left( \begin{matrix} B_C^T \cdot \left ( \frac{\partial_{x_i} \alpha_C}{\alpha_C} - \frac{1}{x_i} \right )_{i \in {\cal F}_C} \\
Ax - b \end{matrix} \right ) \, = \, 0.
\end{equation}
The path $\gamma$ is $\gamma(t) = (1-t)\cdot b_0 + t \cdot b_1$. At every $t \in [0,1]$ the solution path $(t,x(t))$ described by the Santal\'o point is smooth: it is a regular solution to the equations $F(x; \gamma(t))$ by convexity of the dual volume. In this homotopy, we track only one path, and all computations can be done over the real numbers. This feature of our problem makes the procedure extra efficient. 

\begin{example}
    In our running Example \ref{ex:pentagon1}, we can set $b_0 = (1,4/5,4/5)$, $b_1 = (1,1,4/5)$, see Figure \ref{fig:pentagon1}. The fiber $P_{b_1}$ is a quadrilateral: $b_1$ lies on the boundary of the pentagonal cell in ${\cal C}_A$. As $t$ moves from $0$ to $1$, the Santal\'o point $x^*(\gamma(t))$ of $P_{\gamma(t)}$ describes a path on the Santal\'o patchwork from Figure \ref{fig:SPpentagon}. In the $(y_1,y_2)$-plane, this is a path in the interior of pentagon $Q_{\gamma(t)}$ which degenerates continuously to a quadrilateral. The Santal\'o point $x^*(b_0)$ was computed in Example \ref{ex:6-1}. The command \texttt{santalo\_path} computes $x^*(b_1)$ from $x^*(b_0)$:
\begin{minted}{julia}
A = [1 1 1 1 1; 2 1 0 1 0; 1 2 0 0 1]
b0 = 1//5*[5; 4; 4]; b1 = 1//5*[5; 5; 4];
x0 = get_santalo_point(A,b0)
x1 = santalo_path(A,b0,b1,x0)    
\end{minted}
The result is $x^*(b_1) = ( 0.291,0.181,0.145,0.237,0.146)$. 
\end{example}

We conclude with a summary of ideas for future research. Two challenges are provided by Conjectures \ref{conj:irred} and \ref{conj:MLdeg}. More generally, it is interesting to find formulas for the maximum likelihood degree of Wachspress models in terms of the combinatorics of the polytope. 

In the context of linear programming, it is relevant to study the strictly convex objective function $c^T x + \log V_{C,\epsilon \cdot {\bf 1}}(x)$, with $V_{C,u}$ as in \eqref{eq:VCu}, for varying values of $\epsilon \in \mathbb{R}_{\geq 0}$. For $\epsilon \rightarrow \infty$, we recover the dual volume objective. For $\epsilon \rightarrow 0$, we are solving a linear program. We propose to study the degeneration of the Santal\'o patchwork as $\varepsilon$ moves from $\infty$ to $0$.

Next to their important role in convex optimization, we believe that \emph{generalized Santal\'o points}, meaning critical points of $\log V_{C,u}(x)$ from \eqref{eq:VCu}, can be used for the numerical evaluation of Euler integrals via the saddle point method \cite[Section 5, problem 1]{fourlectures}. 

Another next step is to go \emph{beyond convex polytopes}. The Santal\'o point is well-defined for any full-dimensional convex body. One could start with spectrahedra, which is natural in the context of semidefinite programming. The Santal\'o patchwork of a spectrahedron replaces the Gibbs manifold for entropic regularization \cite{pavlov2023gibbs} when the volumetric barrier is used. 

Finally, we propose to study the broader context of Remark \ref{rem:optvsmle}: which strictly convex functions give rise to interesting semi-algebraic subsets of $\mathbb{R}^n_{>0}$? Furthermore, when and how are these semi-algebraic sets naturally connected to maximum likelihood estimation? 

\section*{Acknowledgements}
We thank Frank Sottile and Bernd Sturmfels for useful conversations. 
\bibliographystyle{abbrv}
\small
\bibliography{references.bib}

\begin{thebibliography}{10}

\bibitem{alexandr2023maximum}
Y.~Alexandr and S.~Ho{\c{s}}ten.
\newblock Maximum information divergence from linear and toric models.
\newblock {\em arXiv:2308.15598}, 2023.

\bibitem{arkani2018scattering}
N.~Arkani-Hamed, Y.~Bai, S.~He, and G.~Yan.
\newblock Scattering forms and the positive geometry of kinematics, color and
  the worldsheet.
\newblock {\em Journal of High Energy Physics}, 2018(5):1--78, 2018.

\bibitem{arkani2017positive}
N.~Arkani-Hamed, Y.~Bai, and T.~Lam.
\newblock Positive geometries and canonical forms.
\newblock {\em Journal of High Energy Physics}, 2017(11):1--124, 2017.

\bibitem{bertsimas1997introduction}
D.~Bertsimas and J.~N. Tsitsiklis.
\newblock {\em Introduction to Linear Optimization}.
\newblock Athena Scientific, 1997.

\bibitem{billera1993duality}
L.~J. Billera, I.~M. Gelfand, and B.~Sturmfels.
\newblock Duality and minors of secondary polyhedra.
\newblock {\em Journal of Combinatorial Theory, Series B}, 57(2):258--268,
  1993.

\bibitem{borinsky2023bayesian}
M.~Borinsky, A.-L. Sattelberger, B.~Sturmfels, and S.~Telen.
\newblock Bayesian integrals on toric varieties.
\newblock {\em SIAM Journal on Applied Algebra and Geometry}, 7(1):77--103,
  2023.

\bibitem{breiding2018homotopycontinuation}
P.~Breiding and S.~Timme.
\newblock Homotopycontinuation.jl: A package for homotopy continuation in
  {J}ulia.
\newblock In {\em Mathematical Software--ICMS 2018: 6th International
  Conference, South Bend, IN, USA, July 24-27, 2018, Proceedings 6}, pages
  458--465. Springer, 2018.

\bibitem{catanese2006maximum}
F.~Catanese, S.~Ho{\c{s}}ten, A.~Khetan, and B.~Sturmfels.
\newblock The maximum likelihood degree.
\newblock {\em American Journal of Mathematics}, 128(3):671--697, 2006.

\bibitem{de2010triangulations}
J.~A. De~Loera, J.~Rambau, and F.~Santos.
\newblock {\em Triangulations}.
\newblock Springer Berlin, Heidelberg, 2010.

\bibitem{de2012central}
J.~A. De~Loera, B.~Sturmfels, and C.~Vinzant.
\newblock The central curve in linear programming.
\newblock {\em Foundations of Computational Mathematics}, 12:509--540, 2012.

\bibitem{drton2008lectures}
M.~Drton, B.~Sturmfels, and S.~Sullivant.
\newblock {\em Lectures on algebraic statistics}, volume~39.
\newblock Springer Science \& Business Media, 2008.

\bibitem{duff2019solving}
T.~Duff, C.~Hill, A.~Jensen, K.~Lee, A.~Leykin, and J.~Sommars.
\newblock Solving polynomial systems via homotopy continuation and monodromy.
\newblock {\em IMA Journal of Numerical Analysis}, 39(3):1421--1446, 2019.

\bibitem{gaetz2020positive}
C.~Gaetz.
\newblock Positive geometries learning seminar, canonical forms of polytopes
  from adjoints.
\newblock {\em Unpublished lecture notes, available at
  \url{https://sites.google.com/view/crgaetz/research}}, 2020.

\bibitem{garcia2010linear}
L.~D. Garcia-Puente and F.~Sottile.
\newblock Linear precision for parametric patches.
\newblock {\em Advances in Computational Mathematics}, 33:191--214, 2010.

\bibitem{M2}
D.~R. Grayson and M.~E. Stillman.
\newblock Macaulay2, a software system for research in algebraic geometry.
\newblock Available at \url{http://www2.macaulay2.com}.

\bibitem{guler1996barrier}
O.~G{\"u}ler.
\newblock Barrier functions in interior point methods.
\newblock {\em Mathematics of Operations Research}, 21(4):860--885, 1996.

\bibitem{hartshorne2013algebraic}
R.~Hartshorne.
\newblock {\em Algebraic geometry}, volume~52.
\newblock Springer Science \& Business Media, 2013.

\bibitem{hosten2005solving}
S.~Ho{\c{s}}ten, A.~Khetan, and B.~Sturmfels.
\newblock Solving the likelihood equations.
\newblock {\em Foundations of Computational Mathematics}, 5:389--407, 2005.

\bibitem{huh2013maximum}
J.~Huh.
\newblock The maximum likelihood degree of a very affine variety.
\newblock {\em Compositio Mathematica}, 149(8):1245--1266, 2013.

\bibitem{huh2014likelihood}
J.~Huh and B.~Sturmfels.
\newblock Likelihood geometry.
\newblock {\em Combinatorial algebraic geometry}, 2108:63--117, 2014.

\bibitem{irving2014geometry}
C.~Irving and H.~Schenck.
\newblock {Geometry of Wachspress surfaces}.
\newblock {\em Algebra \& Number Theory}, 8(2):369--396, 2014.

\bibitem{kohn2021adjoints}
K.~Kohn, R.~Piene, K.~Ranestad, F.~Rydell, B.~Shapiro, R.~Sinn, M.-S. Sorea,
  and S.~Telen.
\newblock Adjoints and canonical forms of polypols.
\newblock {\em arXiv:2108.11747}, 2021.

\bibitem{kohn2020projective}
K.~Kohn and K.~Ranestad.
\newblock {Projective geometry of Wachspress coordinates}.
\newblock {\em Foundations of Computational Mathematics}, 20:1135--1173, 2020.

\bibitem{kohn2020moment}
K.~Kohn, B.~Shapiro, and B.~Sturmfels.
\newblock Moment varieties of measures on polytopes.
\newblock {\em Annali della Scuola Normale Superiore di Pisa (Classe Scienze),
  Serie V}, 21:739--770, 2020.

\bibitem{lam2022invitation}
T.~Lam.
\newblock An invitation to positive geometries.
\newblock {\em arXiv:2208.05407}, 2022.

\bibitem{fourlectures}
S.-J. Matsubara-Heo, S.~Mizera, and S.~Telen.
\newblock {Four lectures on {E}uler integrals}.
\newblock {\em SciPost Phys. Lect. Notes}, page~75, 2023.

\bibitem{meyer1998santalo}
M.~Meyer and E.~Werner.
\newblock The {S}antal{\'o}-regions of a convex body.
\newblock {\em Transactions of the American Mathematical Society},
  350(11):4569--4591, 1998.

\bibitem{moraga2021bounding}
J.~Moraga and H.~S{\"u}{\ss}.
\newblock Bounding toric singularities with normalized volume.
\newblock {\em arXiv:2111.01738}, 2021.

\bibitem{nesterov1994interior}
Y.~Nesterov and A.~Nemirovskii.
\newblock {\em Interior-point polynomial algorithms in convex programming}.
\newblock SIAM, 1994.

\bibitem{OSCAR}
Oscar -- open source computer algebra research system, version 0.14.0, 2024.

\bibitem{pavlov2023gibbs}
D.~Pavlov, B.~Sturmfels, and S.~Telen.
\newblock Gibbs manifolds.
\newblock {\em Information Geometry}, pages 1--27, 2023.

\bibitem{mathrepo}
D.~Pavlov and S.~Telen.
\newblock Math{R}epo {S}antal\'o.
\newblock \url{https://mathrepo.mis.mpg.de/Santalo}, 2023.

\bibitem{shafarevich}
I.~R. Shafarevich.
\newblock {\em {Basic algebraic geometry}}.
\newblock Springer, Berlin, 2013.

\bibitem{sommese2005numerical}
A.~J. Sommese, C.~W. Wampler, et~al.
\newblock {\em The Numerical solution of systems of polynomials arising in
  engineering and science}.
\newblock World Scientific, 2005.

\bibitem{sturmfels2024toric}
B.~Sturmfels, S.~Telen, F.-X. Vialard, and M.~von Renesse.
\newblock Toric geometry of entropic regularization.
\newblock {\em Journal of Symbolic Computation}, 120:102221, 2024.

\bibitem{sullivant2018algebraic}
S.~Sullivant.
\newblock {\em Algebraic statistics}, volume 194.
\newblock American Mathematical Soc., 2018.

\bibitem{warren1996barycentric}
J.~Warren.
\newblock Barycentric coordinates for convex polytopes.
\newblock {\em Advances in Computational Mathematics}, 6:97--108, 1996.

\end{thebibliography}

\noindent{\bf Authors' addresses:}
\medskip

\noindent Dmitrii Pavlov, MPI-MiS Leipzig
\hfill {\tt dmitrii.pavlov@mis.mpg.de}

\noindent Simon Telen, MPI-MiS Leipzig
\hfill {\tt simon.telen@mis.mpg.de}
\end{document}